\documentclass[12pt]{amsart}

\usepackage{amsmath,amssymb,amscd,amsthm,graphicx,enumerate}
\usepackage[usenames,dvipsnames]{xcolor}

\usepackage{hyperref}
\hypersetup{breaklinks=true}

\usepackage{enumitem} 
\newlist{condenum}{enumerate}{1} 
\setlist[condenum]{label=\bfseries Condition \arabic*.,  ref=\arabic*, wide}

\numberwithin{equation}{section}
\setcounter{secnumdepth}{3}
\setcounter{tocdepth}{1}

\setlength{\parskip}{1ex}

\theoremstyle{plain}

\makeatletter

\makeatother

\usepackage[utf8]{inputenc}
\usepackage{amssymb}
\usepackage{amsthm}
\usepackage{mathrsfs}
\usepackage{amsfonts}
\usepackage{amsmath}
\usepackage{mathtools}
\usepackage{makecell}
\usepackage{xcolor}
\bibliographystyle{alpha}

\usepackage[margin=1.25in]{geometry}

\usepackage{lipsum}
\makeatletter
\def\ps@pprintTitle{%
 \let\@oddhead\@empty
 \let\@evenhead\@empty
 \def\@oddfoot{}%
 \let\@evenfoot\@oddfoot}
\makeatother


\newcommand{\Rm}{\textnormal{Rm}}

\newcommand{\Ric}{\textnormal{Ric}}

\newcommand{\rii}{\rightarrow\infty}
\newcommand{\ri}{\rightarrow}
\newcommand{\AVR}{\textnormal{AVR}}
\newcommand{\diam}{\textnormal{diam}}
\newcommand{\sy}{\mathbb{Z}_2\times O(2)}
\newcommand{\cigar}{\textnormal{Cigar}}
\newcommand{\gs}{\Gamma(s)}
\numberwithin{equation}{section}

\newtheorem{theorem}{Theorem}[section]
\newtheorem{lem}[theorem]{Lemma}

\newtheorem{cor}[theorem]{Corollary} 

\theoremstyle{definition}
\newtheorem{defn}[theorem]{Definition}
\newtheorem*{theorem*}{Theorem}

\usepackage{xpatch}
\makeatletter
\xpatchcmd{\tableofcontents}{\contentsname \@mkboth}{\small\contentsname \@mkboth}{}{}
\xpatchcmd{\listoffigures}{\chapter *{\listfigurename }}{\chapter *{\small\listfigurename }}{}{}
\makeatother

\begin{document}

\begin{abstract}
We find a family of 3d steady gradient Ricci solitons that are flying wings. This verifies a conjecture by Hamilton. For a 3d flying wing, we show that the scalar curvature does not vanish at infinity. The 3d flying wings are collapsed.

For dimension $n\ge 4$, we find a family of $\mathbb{Z}_2\times O(n-1)$-symmetric but non-rotationally symmetric n-dimensional steady gradient solitons with positive curvature operator. We show that these solitons are non-collapsed.
\end{abstract}

\title[A family of 3d steady gradient solitons]{A family of 3d steady gradient solitons that are flying wings}

\author[Yi Lai]{Yi Lai}
\email{yilai@berkeley.math.edu}
\address[]{Department of Mathematics, University of California, Berkeley, CA 94720, USA}

\maketitle


\setcounter{tocdepth}{1}
%

\begin{section}{Introduction}\label{s: intro}
Ricci solitons are self-similar solutions of the Ricci flow equation, and they often arise as singularity models of Ricci flows.
In particular, a steady gradient soliton is a smooth complete Riemannian manifold $(M,g)$ satisfying
\begin{equation}
    \Ric=\nabla^2 f
\end{equation}
for some smooth function $f$ on $M$, which is called a potential function.
The soliton generates a Ricci flow for all time by $g(t)=\phi_t^*(g)$, where $\{\phi_t\}_{t\in(-\infty,\infty)}$ is the one-parameter group of diffeomorphisms generated by $-\nabla f$ with $\phi_0$ the identity.

In dimension 2, the only non-flat rotationally symmetric steady gradient soliton is Hamilton's cigar soliton \cite{cigar}. In any dimension $n\ge3$, the only non-flat rotationally symmetric steady gradient soliton is the Bryant soliton, which is constructed by Bryant \cite{bryant}.
It is an open problem whether there are any 3d steady gradient solitons other than the 3d Bryant soliton and quotients of $\mathbb{R}\times\cigar$, see e.g. \cite{infinitesimal,Catino,Chow2007a,DZ}.

Hamilton conjectured that there exists a 3d flying wing, which is a $\sy$-symmetric 3d steady gradient soliton asymptotic to a  sector with angle $\alpha\in(0,\pi)$. The term flying wing is also used by Hamilton to describe certain translating solutions in mean curvature flow.
A lot of important progress has been made
for the mean curvature flow flying wings in the past two decades.
For example, the flying wings in $\mathbb{R}^3$ are completely classified by the works of X.J. Wang \cite{Wangxujia} and Hoffman-Ilmanen-Martin-White \cite{white}. Moreover, higher dimensional examples were constructed independently by Bourni-Langford-Tinaglia \cite{langford1} and Hoffman-Ilmanen-Martin-White \cite{white}. 

Despite many analogies between the Ricci flow and mean curvature flow, Hamilton's flying wing conjecture remains open. 
A proposed approach is to obtain the flying wings as limits
of solutions of elliptic boundary value problems. 
This is how the flying wings in mean curvature flow are constructed, where the solutions can be parametrized as graphs \cite{Wangxujia}. However, it seems hard to choose such a parametrization in Ricci flow to get a strictly elliptic equation.  
In this paper, we confirm Hamilton's conjecture by using a different approach.

Our first theorem finds a family of non-rotationally symmetric $n$-dimensional steady gradient solitons with prescribed Ricci curvature at a point in all dimensions $n\ge3$.
This gives an affirmative answer to the open problem by Cao whether there exists a non-rotationally symmetric steady Ricci soliton in dimensions $n\ge 4$ \cite{CaoHD}.
Throughout this section, the quadruple $(M,g,f,p)$ denotes a steady gradient soliton, where $f$ is the potential function and $p$ is a critical point of $f$.
\begin{theorem}\label{t: existence with prescribed eigenvalue}
Given any $\alpha\in(0,1)$, there exists an n-dimensional $\mathbb{Z}_2\times O(n-1)$-symmetric steady gradient soliton $(M,g,f,p)$ with positive curvature operator, such that $\lambda_1=\alpha\lambda_2=\dots=\alpha\lambda_n$, where $\lambda_1,\dots,\lambda_n$ are eigenvalues of the Ricci curvature at p.  
\end{theorem}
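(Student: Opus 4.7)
The plan is to obtain the soliton as a pointed blow-up limit of a carefully chosen family of compact Ricci flows with the desired symmetry, using the eigenvalue ratio at the tip as a continuity parameter.

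First, I would build a one-parameter family $\{g_\tau\}_{\tau\in(0,1]}$ of $\mathbb{Z}_2\times O(n-1)$-invariant metrics on $S^n$ in doubly-warped form (principal orbit $S^{n-2}$), with $g_1$ round and decreasing $\tau$ elongating the manifold along the axis fixed by $O(n-1)$. Both the $\mathbb{Z}_2\times O(n-1)$-symmetry and the positivity of the curvature operator are preserved under the Ricci flow in every dimension $n\geq 3$, and each such flow must develop a finite-time singularity.

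Second, for each $\tau$ I would rescale at the pole $p_\tau$ (the unique common fixed point of the two factors of the group) and at a time $t_\tau$ chosen so that the ratio of the smallest to the largest Ricci eigenvalue at $p_\tau$ equals exactly $\alpha$. Such a $t_\tau$ should exist by an intermediate-value argument in $t$ (or jointly in $(t,\tau)$) provided $\tau$ is small enough, because the ratio begins close to $1$ and is driven toward $0$ by the elongated geometry as the flow approaches its singular time. Hamilton's compactness theorem, combined with Perelman's no-local-collapsing, then extracts a smooth pointed subsequential limit $(M_\infty, g_\infty(t), p_\infty)$: a non-collapsed ancient solution with $\mathbb{Z}_2\times O(n-1)$-symmetry, positive curvature operator (upgraded from non-negative via the strong maximum principle), and the prescribed Ricci eigenvalue ratio $\alpha$ at $p_\infty$.

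Third, to promote the ancient limit to a steady gradient Ricci soliton, I would use a Perelman-style asymptotic-soliton or Hamilton trace-Harnack argument: the eigenvalue ratio being pinned at $\alpha\in(0,1)$ at the fixed tip rules out self-similar shrinking or expanding scaling, forcing the steady case. The symmetry makes $p_\infty$ a critical point of the potential, and the prescribed eigenvalue split at $p_\infty$ is exactly the statement of the theorem.

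The main obstacle is identifying the limit and sweeping out every $\alpha\in(0,1)$. On the first count, one must prevent the blow-up from degenerating to a trivial limit such as $\mathbb{R}\times(\textnormal{round }S^{n-1})$ or a quotient thereof, which requires quantitative control near the tip along the lines of canonical-neighborhood theorems in Perelman's work, adapted to the cohomogeneity-two setting. On the second count, continuity and surjectivity of $\tau\mapsto\alpha_\tau$ needs rigidity at the endpoints: as $\tau\to 1$ the limit should approach the Bryant soliton (invoking Brendle's uniqueness) so $\alpha_\tau\to 1$, while as $\tau\to 0$ the limit should degenerate toward a cylindrical object, forcing $\alpha_\tau\to 0$.
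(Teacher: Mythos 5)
Your construction cannot produce the solitons in question, for two independent reasons. First, the blow-up step has no mechanism to yield a \emph{steady soliton} at all: a compact Ricci flow on $S^n$ with positive curvature operator converges, after rescaling, to the round metric (Hamilton for $n=3$, B\"ohm--Wilking for $n\ge 4$), so the eigenvalue ratio at the pole is driven toward $1$ near the singular time and every genuine blow-up limit is the shrinking round sphere. Choosing an intermediate time $t_\tau$ where the ratio equals $\alpha$ puts you at a non-singular scale, where the pointed limit (if it exists as $\tau\to 0$) is merely some ancient or even incomplete-in-time flow; the claim that pinning the Ricci eigenvalue ratio at a fixed point ``rules out shrinking or expanding, forcing the steady case'' is not an argument --- most ancient solutions are not self-similar, and Hamilton's Harnack rigidity only gives a steady soliton for an \emph{eternal} solution attaining the supremum of its scalar curvature, which your rescaling does not arrange. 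Second, even granting a steady-soliton limit, Perelman's no-local-collapsing forces any such limit to be $\kappa$-noncollapsed, whereas the $3$-dimensional solitons of the theorem are collapsed (they converge to $\mathbb{R}\times\textnormal{Cigar}$ along the edges); by Brendle's uniqueness theorem the only noncollapsed $3$d steady gradient soliton is the Bryant soliton, whose Ricci eigenvalues at the tip are all equal. So in dimension $3$ --- the case of Hamilton's conjecture --- your scheme could only ever reach $\alpha=1$.

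The paper avoids both problems by starting from objects that are already solitons: Deruelle's expanding gradient solitons asymptotic to cones $C(X_{i,\mu})$ over $\mathbb{Z}_2\times O(n-1)$-symmetric cross-sections with curvature greater than $1$ and volume tending to zero. With the normalization $R(p_{i,\mu})=1$, the vanishing asymptotic volume ratio forces the expander constant to tend to zero, so subsequential limits are steady gradient solitons (Lemma \ref{l: expanding converging to steady}); the needed injectivity radius bound comes not from Perelman's no-local-collapsing but from a symmetry-based volume lower bound at the critical point (Lemma \ref{l: v_0}), which is what permits collapsed limits in dimension $3$. The continuity parameter is the cross-section $\mu$, with endpoint limits the Bryant soliton (ratio $1$) and $\mathbb{R}$ times a lower-dimensional Bryant soliton or cigar (ratio $0$), and the intermediate value theorem is applied to $\mu\mapsto\lambda_1/\lambda_2$ at the critical point before passing to the steady limit. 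If you want to salvage a flow-based approach, you would need a replacement for both the soliton-producing mechanism and the collapsing-compatible compactness; as written, the proposal fails at these two steps.
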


The 3d steady gradient solitons from Theorem \ref{t: existence with prescribed eigenvalue} are collapsed, which is an easy consequence of its asymptotic geometry. This also follows from the uniqueness of the Bryant soliton among 3d non-collapsed steady gradient solitons by Brendle \cite{brendlesteady3d}.
Moreover, we show that the n-dimensional steady gradient solitons from Theorem \ref{t: existence with prescribed eigenvalue} are non-collapsed for all $n\ge4$.
They are analogous to the non-collapsed translators in mean curvature flow constructed by Hoffman-Ilmanen-Martin-White \cite{white}.


Our second theorem says that a $\mathbb{Z}_2\times O(2)$-symmetric 3d steady gradient soliton must be a Bryant soliton if the asymptotic cone is a ray. So the family of 3d steady gradient solitons from Theorem \ref{t: existence with prescribed eigenvalue} are all flying wings, which confirms Hamilton's conjecture. Figure 1 is the picture of a 3d flying wing. 

\begin{figure}[h]\label{f}
\centering
\includegraphics[width=12.5cm,height=5.5cm]{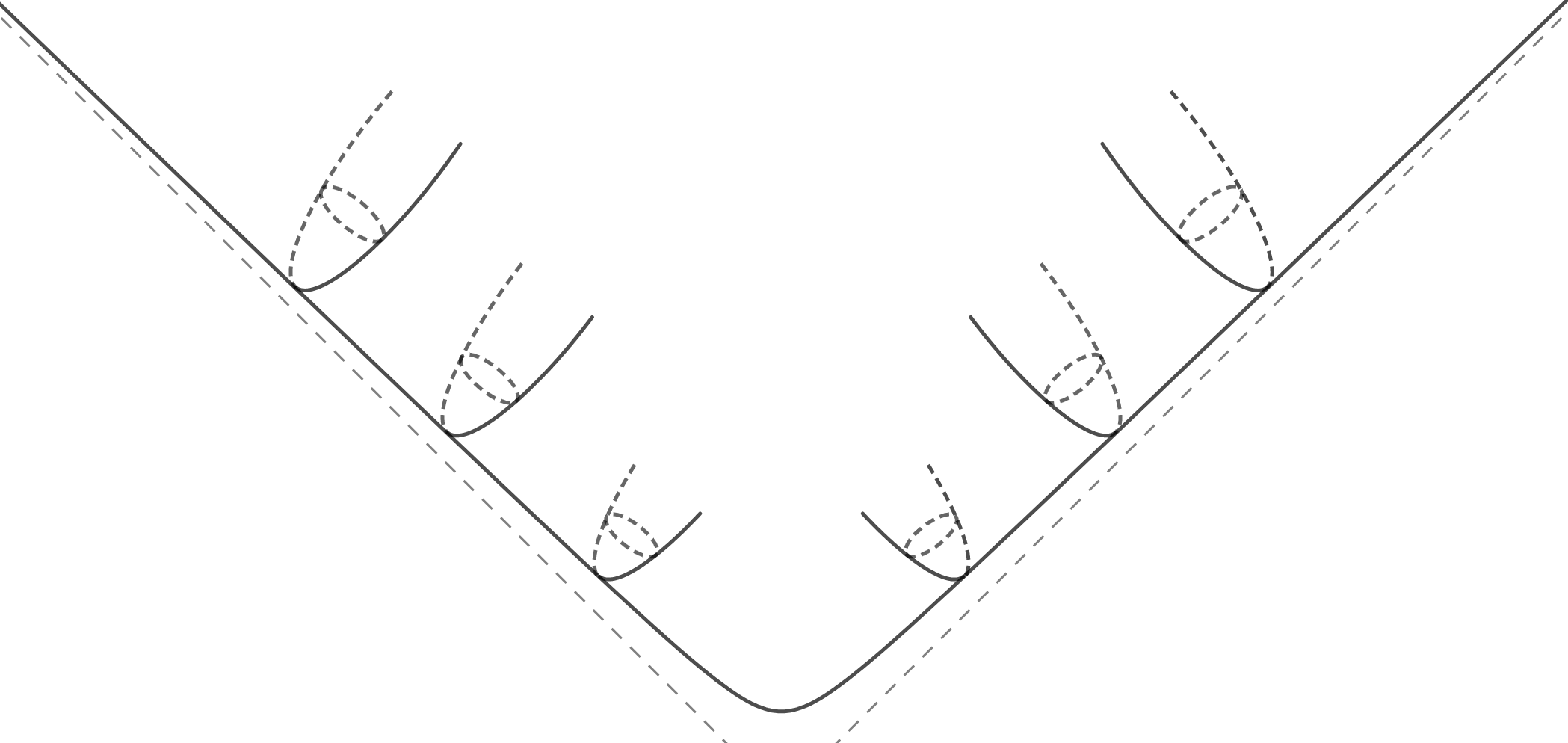}
\caption{A 3d flying wing} 
\end{figure}

\begin{theorem}\label{t: a ray implies Bryant soliton}
Let $(M,g,f,p)$ be a $\mathbb{Z}_2\times O(2)$-symmetric 3d steady gradient soliton. Suppose its asymptotic cone is a ray. Then it is isometric to the Bryant soliton.
\end{theorem}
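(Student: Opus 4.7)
The plan is to show that under the hypotheses, $(M,g,f,p)$ is $\kappa$-noncollapsed, and then to invoke Brendle's rigidity theorem \cite{brendlesteady3d} identifying $\kappa$-noncollapsed 3d steady gradient solitons with the Bryant soliton.

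\medskip

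\emph{Preliminaries.} By B.-L.\ Chen's theorem, $\scal\ge 0$ on any 3d steady gradient Ricci soliton; the strong maximum principle on the Ricci flow generated by the soliton, combined with Hamilton's maximum principle for the curvature operator in dimension 3, then yields that $(M,g)$ has strictly positive sectional curvature (assuming $M$ is nonflat, which is forced by the nontriviality of the soliton). Under the $\sy$-symmetry, let $A$ denote the $O(2)$-fixed geodesic (the ``axis'') and $\Sigma$ the $\mathbb{Z}_2$-fixed totally geodesic 2-surface (the ``equator''); these meet transversally at $p$. Let $\phi(x)=d(x,A)$ be the orbit-radius function. The $\sy$-action descends to the asymptotic cone, and since a ray is 1-dimensional, the induced $O(2)$-action on it must be trivial, so $\phi(x)/d(x,p)\to 0$ as $d(x,p)\to\infty$.

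\medskip

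\emph{Non-collapsing.} Suppose for contradiction that $(M,g)$ is not $\kappa$-noncollapsed for any $\kappa>0$. Then one obtains sequences $x_k\to\infty$ and scales $r_k>0$ with $|\Rm|\le r_k^{-2}$ on $B_{r_k}(x_k)$ but $\textnormal{vol}(B_{r_k}(x_k))/r_k^3\to 0$. Passing to a subsequence, the rescalings $(M,r_k^{-2}g,x_k)$ collapse in the $\sy$-equivariant Cheeger--Gromov--Fukaya sense along some $\sy$-invariant line field. Positive sectional curvature rules out a flat $\mathbb{R}$-factor transverse to the $O(2)$-orbits, so the collapse direction must coincide with the $O(2)$-orbit direction; in particular $\phi(x_k)/r_k\to 0$. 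Combining this with $\phi/d(\cdot,p)\to 0$ and the soliton identities on the 2-dimensional orbit space $M/\sy$, one shows that $M/\sy$ is genuinely 2-dimensional at infinity (its two boundary components, the images of $A$ and of $\Sigma$, remain macroscopically separated after rescaling by $d(x_k,p)$). Consequently the asymptotic cone of $M$ must contain a 2-dimensional sector of positive opening angle, contradicting the ray hypothesis. Therefore $(M,g)$ is $\kappa$-noncollapsed, and Brendle's theorem \cite{brendlesteady3d} yields that $M$ is isometric, up to scaling, to the Bryant soliton.

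\medskip

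\emph{Main obstacle.} The hardest step is the propagation in the previous paragraph: converting a local equivariant collapse at $x_k\to\infty$ into a global two-dimensional structure for the asymptotic cone, via the $\sy$-reduced soliton equations on the 2d orbit space $M/\sy$. This depends on an ODE/PDE analysis of the orbit radius $\phi$ along the $\nabla f$-flow on the orbit space, together with a classification of $\sy$-equivariant collapsed 3d ancient limits under positive sectional curvature (excluding, for instance, $\cigar\times\mathbb{R}$ type limits with the $S^1$ transverse to the $O(2)$-orbits). Establishing this dichotomy cleanly—local collapse implies macroscopic two-dimensionality of $M/\sy$—is where the real work of the proof should lie.
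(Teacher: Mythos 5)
Your proposal has a genuine gap at its central step. The endgame (non-collapsed $\Rightarrow$ Bryant, by Brendle) is legitimate, but the reduction ``asymptotic cone a ray $\Rightarrow$ non-collapsed'' is essentially the whole theorem, and the paragraph meant to establish it only asserts the crux: after observing that any collapse must occur along the $S^1$-orbits and that $d(\cdot,\Gamma)/d(\cdot,p)\to 0$, you write that ``one shows that $M/(\mathbb{Z}_2\times O(2))$ is genuinely 2-dimensional at infinity'' and hence the cone contains a sector of positive angle. No mechanism is given for this, and none is obvious: the dangerous scenario is precisely a collapsed soliton whose orbit space is asymptotically one-dimensional (thin at infinity, $S^1$-fibers of slowly growing or bounded length) with ray asymptotic cone, and nothing in your sketch excludes it. Indeed your own ``Main obstacle'' paragraph concedes that this dichotomy is where the real work lies, so what you have is a plan whose hardest step is unproved, together with a minor inaccuracy in the preliminaries (non-flat does not by itself give strictly positive sectional curvature; one must first dispose of the split case $\mathbb{R}\times\cigar$, which is harmless here only because its asymptotic cone is a half-plane rather than a ray).

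For comparison, the paper does not prove non-collapsedness at all; it argues by a quantitative bootstrap that goes in the opposite logical direction. Assuming the soliton is not Bryant, the dimension reduction theorem applies along $\Gamma$, and one tracks two functions along a ray $\gamma\subset\Sigma$: $h_1(s)=d(\gamma(s),\Gamma)$ and the $S^1$-fiber length $h_2(s)=\varphi(\gamma(s))$. Three inputs are needed: the curvature bound $h_1^2(s)R(\gamma(s))\le C$ (a volume/Perelman-type estimate on the universal cover of $M\setminus\Gamma$), the lower bound $h_1(s)h_2(s)\ge C^{-1}s$ coming from $\int_{B(p,s)}R\ge C^{-1}s$ via Stokes' theorem and $R=\Delta f$ (this is where the ray hypothesis enters), and the Killing-field identity $\Ric(X/|X|,X/|X|)=\nabla f(|X|)/|X|$ for $X=\partial_\theta$, which converts these bounds into a differential inequality for $h_2$. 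Two iterations give $h_2$ bounded, whence $\lim_{s\rii}R(\Gamma(s))>0$; this contradicts Theorem \ref{l: positive angle}, which forces $\lim_{s\rii}R(\Gamma(s))=R(p)\sin^2\frac{\alpha}{2}=0$ when the cone is a ray. If you want to salvage your route, you would need a quantitative replacement for exactly these ingredients; the soft equivariant-collapse language by itself cannot distinguish a ray cone from a thin sector.
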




\begin{cor}\label{c: unique of bryant}
A $\sy$-symmetric but non-rotationally symmetric 3d steady gradient soliton with positive curvature operator is a flying wing. In particular, the 3d steady gradient solitons from Theorem \ref{t: existence with prescribed eigenvalue} are all flying wings.
\end{cor}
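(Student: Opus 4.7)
My plan is to derive Corollary \ref{c: unique of bryant} from Theorems \ref{t: existence with prescribed eigenvalue} and \ref{t: a ray implies Bryant soliton} via a short dichotomy on the asymptotic cone. Let $(M,g,f,p)$ be a $\sy$-symmetric 3d steady gradient soliton with positive curvature operator. Any asymptotic cone $C$ inherits an isometric $\sy$ action from the blow-down limit $(M,r_i^{-2}g,p)\to(C,o)$, and $C$ is a nonnegatively curved metric cone.

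The first step is to enumerate the possibilities for $C$. Strict positivity of the curvature operator on $M$ prevents $M$ from containing a line (Cheeger-Gromoll), and a standard Alexandrov argument in this nonnegatively curved setting then shows $C$ cannot split off an $\RR$-factor; combined with the $\sy$-symmetry, this leaves only two options, namely $C$ is a ray or $C$ is a $2$-dimensional sector of opening angle $\alpha\in(0,\pi)$. If $C$ is a ray, then Theorem \ref{t: a ray implies Bryant soliton} forces $(M,g,f,p)$ to be the Bryant soliton, which is rotationally symmetric and contradicts the hypothesis. Therefore $C$ is a sector of opening angle $\alpha\in(0,\pi)$, and by the definition given in the introduction $(M,g,f,p)$ is a flying wing.

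For the second assertion I would apply the above to the $n=3$ case of Theorem \ref{t: existence with prescribed eigenvalue}. Those solitons are $\sy$-symmetric with positive curvature operator, and at $p$ their Ricci eigenvalues satisfy $\lambda_1=\alpha\lambda_2=\alpha\lambda_3$ for some $\alpha\in(0,1)$. In particular $\lambda_1\ne\lambda_2$, so $\Ric$ at $p$ is not a scalar matrix. Hence the soliton is not invariant under the full $O(3)$ fixing $p$; that is, it is non-rotationally symmetric, so the first part applies and it is a flying wing.

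The main technical input I expect to rely on is the classification of $\sy$-invariant metric cones in low dimension and the rule-out of cones containing a Euclidean line factor via strict positivity of the curvature operator. Both are structural statements about nonnegatively curved cones rather than about the Ricci-soliton equation itself, and I anticipate they are already packaged in the preliminary sections of the paper (alongside the definitions of $\sy$-symmetry and of the asymptotic cone) and can simply be quoted at this point in the argument.
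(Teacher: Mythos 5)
Your overall skeleton is the paper's: classify the asymptotic cone, exclude the ray case via Theorem \ref{t: a ray implies Bryant soliton}, and deduce the second assertion from the Ricci eigenvalue ratio $\alpha\in(0,1)$ at $p$ (which is incompatible with rotational symmetry, since any isometry fixes the unique critical point $p$ when $\Rm>0$). That part, and your exclusion of the half-plane case via no line in $M$, match what the paper does (the paper invokes the same observation, e.g.\ in the proof of Lemma \ref{l: key estimate}, where $\alpha=\pi$ would force $\mathbb{R}\times\cigar$).

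The gap is in your enumeration step. Knowing that $M$ contains no line (hence that the asymptotic cone cannot split off an $\mathbb{R}$-factor, via comparison-angle monotonicity) together with the $\sy$-symmetry does not leave only a ray or a sector of angle less than $\pi$: it does not exclude, for instance, a two-dimensional cone over a circle of length less than $2\pi$ (on which the $O(2)$-action could act trivially, with the two classes of the ends of $\Gamma$ as fixed points), nor a priori a three-dimensional cone over a two-sphere-type link. Ruling these out is not a soft structural fact about symmetric nonnegatively curved cones sitting in the preliminaries; it is precisely Lemma \ref{l: angles between Gamma and gamma} in Section \ref{s: proof}, whose proof is a genuine comparison argument on the manifold itself: one shows that the classes of the two ends of $\Gamma$ realize the diameter of the space of rays at infinity and that every other class lies metrically between them, using $\Rm>0$ to push the minimizing geodesics $\Gamma(s_i)\Gamma(-s_i)$ off to infinity and the $\mathbb{Z}_2$-symmetry to place the $\Sigma$-directions at the midpoint. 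If you cite that lemma (rather than your sketched Alexandrov argument), your dichotomy becomes exactly the paper's one-line proof: the cone is a sector over $[-\tfrac{\alpha}{2},\tfrac{\alpha}{2}]$ with $\alpha\in[0,\pi]$, the case $\alpha=0$ is excluded by Theorem \ref{t: a ray implies Bryant soliton} together with non-rotational symmetry, the case $\alpha=\pi$ is excluded by the splitting argument, and $M$ is a flying wing.
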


It has been wondered whether the scalar curvature vanishes at infinity in all 3d steady gradient solitons. By Theorem \ref{l: positive angle} we see that this fails in 3d flying wings.
More precisely, Theorem \ref{l: positive angle} shows that the scalar curvature has a positive limit along the edges of the wing, and there is a quantitative relation between this limit and
the angle of the asymptotic cone.

\begin{theorem}\label{l: positive angle}
Let $(M,g,f,p)$ be a $\sy$-symmetric 3d steady gradient soliton, whose asymptotic cone is a metric cone over the interval $[-\frac{\alpha}{2},\frac{\alpha}{2}]$ for some $\alpha\in[0,\pi]$.
Let $\Gamma:(-\infty,\infty)\ri M$ be the complete geodesic fixed by the $O(2)$-action, then
\begin{equation}
    \lim_{s\rii}R(\Gamma(s))=R(p)\sin^2\frac{\alpha}{2}.
\end{equation}
\end{theorem}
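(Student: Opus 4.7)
My plan is to reduce the statement to computing the asymptotic rate at which $f$ grows along $\Gamma$, and then identify this rate by passing to the asymptotic cone.

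First I would observe that $\nabla f$ is tangent to $\Gamma$ along $\Gamma$. Indeed, $f$ is $O(2)$-invariant (up to an additive constant, which we fix by $f(p)=0$, using that $p\in\Gamma$ is a fixed point of $O(2)$), and the tangent line of $\Gamma$ at each $\Gamma(s)$ is the unique $O(2)$-fixed line in $T_{\Gamma(s)}M$. Writing $\nabla f|_{\Gamma(s)}=f'(s)\dot\Gamma(s)$ and using the steady identity $R+|\nabla f|^2\equiv R(p)$ gives
\begin{equation*}
R(\Gamma(s)) = R(p) - f'(s)^2,
\end{equation*}
so it suffices to show $\lim_{s\to\infty} f'(s) = \sqrt{R(p)}\cos(\alpha/2)$. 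Moreover $(R\circ\Gamma)'(s) = -2f'(s)\,\Ric(\dot\Gamma,\dot\Gamma)\le 0$ (using $\Ric\ge 0$, which holds on any 3d steady soliton by Hamilton-Ivey applied to the eternal flow), so $R\circ\Gamma$ is monotone and bounded below by $0$; hence $\lim R(\Gamma(s))$ and $\lim f'(s)$ both exist.

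To compute this limit I would rescale to the asymptotic cone. Consider $(M,\lambda^{-2}g,p)$ for $\lambda\to\infty$; by hypothesis these converge in pointed Gromov-Hausdorff sense to $C_\alpha$, the flat metric cone over $[-\alpha/2,\alpha/2]$, which I write in polar coordinates $(r,\theta)$. Set $\tilde f_\lambda := f/\lambda$. Then $|\tilde\nabla\tilde f_\lambda|_{\lambda^{-2}g} = |\nabla f|_g \le \sqrt{R(p)}$ and $\tilde f_\lambda(p)=0$, so by Arzel\`a-Ascoli a subsequence converges locally uniformly on $C_\alpha$ to a $\sqrt{R(p)}$-Lipschitz function $f_\infty$ with $f_\infty(\text{tip})=0$. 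Since the rescaled Hessian satisfies $\tilde\nabla^2\tilde f_\lambda = \Ric/\lambda\to 0$ (as $\Ric$ is bounded on $M$), the limit $f_\infty$ is affine on the flat interior of $C_\alpha$; and because the interior is flat, scalar curvature decays along sequences converging to interior points, so $|\nabla f_\infty|^2 = R(p) - R \to R(p)$ there. Combined with the $\mathbb{Z}_2$-symmetry $\theta\mapsto -\theta$ and $f_\infty\ge 0$, the only possibility is
\begin{equation*}
f_\infty(r,\theta) = \sqrt{R(p)}\,r\cos\theta.
\end{equation*}

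Finally, in the cone limit $\Gamma$ for $s\to\infty$ converges to one of the boundary edges of the sector, say $\theta=\alpha/2$; because $\Gamma$ is a geodesic whose cone limit is the (geodesic) edge, its arc-length parameter $s$ matches the radial coordinate $r$ in the limit. Reading off $f_\infty$ along this edge yields $f(\Gamma(s))/s\to\sqrt{R(p)}\cos(\alpha/2)$, which by the existence of $\lim f'(s)$ equals $\lim_{s\to\infty}f'(s)$, and substituting into the identity of the first step gives $\lim_{s\to\infty}R(\Gamma(s)) = R(p)\sin^2(\alpha/2)$. The main obstacle will be making the identification $|\nabla f_\infty|^2 = R(p)$ on the cone interior precise: this requires that the rescalings converge smoothly enough away from the tip and boundary edges that the scalar curvature of the rescaled metric tends to zero in the interior. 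This should follow from the warped-product structure of the soliton in $O(2)$-adapted coordinates together with bounded-curvature estimates for steady solitons, but it is the step where the proof must engage most carefully with the collapsing geometry.
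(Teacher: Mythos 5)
Your strategy (blow down to the asymptotic cone, extract a limit $f_\infty$ of $f/\lambda$, and identify it as $\sqrt{R(p)}\,r\cos\theta$) is genuinely different from the paper's proof, which instead integrates $\Ric(\sigma',\sigma')=\frac{d}{dr}\langle\nabla f,\sigma'\rangle$ along minimizing geodesics from $\Gamma(s)$ to $\Gamma(-s)$, evaluates the boundary term as $2|\nabla f|(\Gamma(s))\sin\frac{\alpha}{2}$ by angle comparison, and evaluates the integral as $2R^{1/2}(\Gamma(s))\cos\frac{\alpha}{2}$ using the dimension reduction to the cigar. However, your argument has a genuine gap at its central step. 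The claim that the rescaled Hessian tends to zero is based on a scaling error: with $\tilde g=\lambda^{-2}g$ and $\tilde f=f/\lambda$, the second derivative of $\tilde f$ along a $\tilde g$-unit-speed geodesic is $\lambda\,\Ric(v,v)$ for a $g$-unit vector $v$ (equivalently $|\nabla^2_{\tilde g}\tilde f|_{\tilde g}=\lambda|\Ric|_g$), not $\Ric/\lambda$; so "$\Ric$ bounded on $M$" gives a quantity that blows up rather than vanishes. To make the affineness argument work in the interior you need quadratic decay of curvature away from $\Gamma$ (essentially the paper's Lemma \ref{l: key estimate}, $h_1^2 R\le C$, whose proof requires the universal-cover volume trick and Perelman's curvature estimate), plus an argument that minimizing geodesics between interior points stay in the interior region; near the edges the statement is simply false, since the convexity of $\tilde f_\lambda$ concentrates there (this concentration is exactly the $2R^{1/2}\cos\frac{\alpha}{2}$ term in the paper). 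Similarly, "the interior is flat, hence the scalar curvature decays there" is not a valid inference for a collapsed Gromov--Hausdorff limit; it again needs the quantitative decay. Note also that without genuine Hessian-vanishing, convexity, $\mathbb{Z}_2$-symmetry, nonnegativity and slope $\equiv\sqrt{R(p)}$ do not pin down $f_\infty$: crease-type functions such as $\sqrt{R(p)}\,|r\sin\theta|$ satisfy all of these and would give a different (wrong) conclusion.

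A second gap is the case $\alpha=0$: the sector then has empty interior, so your mechanism produces no information, yet the statement $\lim_{s\rii}R(\Gamma(s))=0$ in that case is nontrivial and is precisely what the paper obtains from the dichotomy using the dimension reduction to the cigar (if $\lim R>0$ the boundary term of the integral identity is $\epsilon(s)$ while the Ricci integral is bounded below by $\approx 2R^{1/2}$, a contradiction). Finally, two facts you assert without proof do require argument in this setting: that $\Gamma(s)$ converges to the extreme (edge) class of the cone rather than an interior class (paper's Lemma \ref{l: angles between Gamma and gamma}, which uses the symmetry and $\Rm>0$), and that the arclength parameter of $\Gamma$ is asymptotic to $d(p,\Gamma(s))$ ($\Gamma|_{[0,\infty)}$ is not known to be minimizing; the paper proves this via the potential function in Lemma \ref{l: f and d}). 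If you supply the curvature decay lemma and handle $\alpha=0$ separately, your route could be completed and would have the attractive feature of avoiding the cigar integral computation, but as written the key identification of $f_\infty$ is not justified.
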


We prove in the following corollary that the asymptotic geometry of a 
3d flying wing is uniquely determined by the angle of the asymptotic cone.
In particular, it converges to $\mathbb{R}\times\cigar$ along the edges. 
This is analogous to mean curvature flow flying wings, where the asymptotic geometry is uniquely determined by the width of the slab that contains the wing \cite{langford1}.

\begin{cor}\label{c: aymptotic geometry}
Let $(M,g,f,p)$ be a 3d flying wing, whose asymptotic cone is a sector with angle $\alpha\in(0,\pi)$. Then for any sequence of points $q_i\in\Gamma$ going to infinity, the sequence of pointed Riemannian manifolds $(M,g,q_i)$ smoothly converges to  $\mathbb{R}\times\cigar$, where the scalar curvature at the tip of the cigar is  $R(p)\sin^2\frac{\alpha}{2}$.
\end{cor}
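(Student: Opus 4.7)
The plan is to extract a smooth pointed Cheeger--Gromov limit of $(M,g,q_i)$, verify it is a complete $3$d steady gradient soliton containing a line, and then apply the Cheeger--Gromoll splitting theorem together with Hamilton's classification of $2$d steady solitons to identify it as $\RR\times\cigar$.

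For compactness I would first note the uniform $C^\infty$ bounds on curvature: $R\leq R(p)$ on any steady soliton, positivity of the curvature operator in dimension $3$ pinches $|\Rm|$ in terms of $R$, and Shi-type estimates give bounds on all higher covariant derivatives. The delicate point is the uniform-in-$i$ injectivity radius lower bound at $q_i$. Because $q_i\in\Gamma$ is a fixed point of the $O(2)$-action, the metric in Fermi coordinates $(s,r,\theta)$ around $\Gamma$ takes the rotationally symmetric form $ds^2+dr^2+\phi(r,s)^2\,d\theta^2$ with $\phi(r,s)=r+O(r^3)$ as $r\to 0$; a uniform lower bound $\phi(r,s)\geq cr$ on some fixed scale $0<r\leq\rho_0$ implies $\mathrm{vol}\,B(q_i,\rho_0)\geq c\rho_0^3$, and Cheeger--Gromov--Taylor then yields $\mathrm{inj}(q_i)\geq\delta>0$ uniformly in $i$. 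Hamilton's compactness theorem then produces a smooth pointed limit $(M_\infty,g_\infty,q_\infty)$.

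The limit inherits a steady gradient soliton structure: since $|\nabla f|^2=R(p)-R$ and $\nabla^2 f=\Ric$ are uniformly bounded, the renormalized potentials $f_i=f-f(q_i)$ converge smoothly to some $f_\infty$ with $\Ric_\infty=\nabla^2 f_\infty$. Positivity of the curvature operator passes to $\Ric_\infty\geq 0$, the $O(2)$-symmetry around $\Gamma$ is preserved in the limit, and $\Gamma_\infty:=\lim\Gamma$ is a complete geodesic through $q_\infty$, i.e., a line in $M_\infty$. By Cheeger--Gromoll, $M_\infty$ splits isometrically as $\RR\times N$, with $\Gamma_\infty=\RR\times\{o\}$. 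The factor $N$ is then a complete $2$d steady gradient soliton (the potential splits), rotationally symmetric about $o$, with $R_N(o)=R_\infty(q_\infty)=R(p)\sin^2\frac{\alpha}{2}>0$ by Theorem \ref{l: positive angle}. Hamilton's classification of $2$d steady gradient solitons then forces $N\cong\cigar$, with scalar curvature at the tip equal to $R(p)\sin^2\frac{\alpha}{2}$; since the limit is independent of subsequences the full sequence converges.

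The main obstacle is the uniform non-collapsing at $q_i$, i.e., the lower bound on $\phi(r,s)$ as $s\to\infty$. The natural route is via a Toponogov-type argument in the two-dimensional orbit space $M/O(2)$, using that the scalar curvature (and hence sectional curvature) is bounded below along $\Gamma$ by the positive quantity $R(p)\sin^2\frac{\alpha}{2}$ from Theorem \ref{l: positive angle}; without such a bound the sequence could a priori collapse and only a lower-dimensional limit would arise.
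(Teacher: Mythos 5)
Your overall strategy (compactness, splitting off a line, identifying the cross-section as a $2$d steady soliton via the limit potentials, then invoking the uniqueness of the cigar) is the same circle of ideas that underlies the paper's Dimension Reduction Theorem \ref{l: geometry at infinity}; the paper's actual proof of the corollary is one line: Theorem \ref{l: positive angle} gives $R(\Gamma(s))\to R(p)\sin^2\frac{\alpha}{2}>0$, so the rescaling factors $K_i=R(\Gamma(s_i))$ in Theorem \ref{l: geometry at infinity} converge to a fixed positive constant and the unrescaled sequence $(M,g,q_i)$ converges to $\mathbb{R}\times\cigar$ with the stated curvature at the tip. You instead try to reprove the dimension reduction from scratch, and the step you yourself flag as ``the main obstacle'' --- the uniform injectivity radius (non-collapsing) lower bound at $q_i$ --- is precisely the hard content, and it is not established by your sketch. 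The route you propose is flawed: a lower bound on the \emph{scalar} curvature along $\Gamma$ does not give a positive lower bound on \emph{sectional} curvatures (in the limit $\mathbb{R}\times\cigar$ every plane containing the $\mathbb{R}$-direction is flat, so the sectional curvatures of $M$ at planes containing $\Gamma'(s)$ tend to $0$), and Toponogov-type comparison from a lower curvature bound controls angles and gives volume \emph{upper} bounds, not the volume/injectivity lower bound you need; with only $K\ge 0$ nothing a priori rules out collapse along $\Gamma$. In the paper this is the content of Lemma \ref{l: curvature bound and inj bound}: one works at the volume scale $r(s)$, uses Perelman's curvature estimate for ancient flows with nonnegative curvature operator to get $R\le C r^{-2}(s)$ and $\textnormal{inj}(\Gamma(s))\ge C^{-1}r(s)$, splits off a line using Lemma \ref{l: f and d}, and then rules out $R^{-1/2}(\Gamma(s))\gg r(s)$ by a flatness-plus-rotational-symmetry contradiction; none of this is replaced by your sketch.

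Two further points. First, your assertion that $\Gamma_\infty=\lim\Gamma$ ``is a complete geodesic through $q_\infty$, i.e.\ a line'' is a non sequitur: $\Gamma$ is not minimizing in $M$ (the minimizing geodesic between $\Gamma(-s)$ and $\Gamma(s)$ is the chord $\sigma$, not $\Gamma$), and a complete geodesic in the limit need not be a line. The paper proves the limit of $\Gamma$ is a line via Lemma \ref{l: f and d} (almost-additivity of distance along the integral curve of $\nabla f/|\nabla f|$), which you should invoke or reprove. Second, once these gaps are filled your identification of the cross-section (limit of the potentials $f-f(q_i)$, nonflatness from $R_\infty(q_\infty)=R(p)\sin^2\frac{\alpha}{2}>0$, Hamilton's uniqueness of the cigar) matches the end of the proof of Theorem \ref{l: geometry at infinity}; but at that point the shortest correct argument is simply to cite Theorem \ref{l: positive angle} together with Theorem \ref{l: geometry at infinity} rather than redo the compactness theory.
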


As an application of Theorem \ref{t: a ray implies Bryant soliton} and \ref{l: positive angle}, we construct a sequence of 3d flying wings whose asymptotic cones have arbitrarily small angles.


\begin{cor}\label{c: alpha_i}
There exists a sequence of 3d flying wings $\{(M_i,g_i)\}_{i=1}^{\infty}$, whose asymptotic cone is a sector with angle $\alpha_i\in(0,\pi)$ such that $\lim_{i\rii}\alpha_i=0$.
\end{cor}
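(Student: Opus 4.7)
The plan is to apply Theorem \ref{t: existence with prescribed eigenvalue} along a sequence of parameters $\tilde{\alpha}_i\to 1^-$, producing flying wings whose Ricci eigenvalues at the critical point become progressively more isotropic, and then to argue that their asymptotic cone angles must tend to zero. For each $\tilde{\alpha}_i\in(0,1)$ with $\tilde{\alpha}_i\to 1$, Theorem \ref{t: existence with prescribed eigenvalue} produces a 3d $\sy$-symmetric steady gradient soliton $(M_i,g_i,f_i,p_i)$ with positive curvature operator and Ricci eigenvalues $\tilde{\alpha}_i\lambda^{(i)},\lambda^{(i)},\lambda^{(i)}$ at $p_i$; after rescaling I assume $R(p_i)=1$. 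Because $\tilde{\alpha}_i<1$ rules out rotational symmetry, Corollary \ref{c: unique of bryant} identifies each $(M_i,g_i)$ as a flying wing with some asymptotic cone angle $\beta_i\in(0,\pi)$, and Theorem \ref{l: positive angle} gives
\[
\lim_{s\to\infty} R_i(\Gamma_i(s)) = \sin^2(\beta_i/2).
\]
So it suffices to show these limits tend to $0$.

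The central step is a compactness argument showing $(M_i,g_i,p_i)$ subconverges smoothly in the pointed Cheeger--Gromov sense to the Bryant soliton. The steady soliton identity $R+|\nabla f|^2=R(p)=1$ forces $R_i\le 1$ globally; combined with positive curvature operator in dimension $3$, this bounds the full curvature tensor. As $\tilde{\alpha}_i\to 1$ the Ricci eigenvalues at $p_i$ all tend to $1/3$, so the local geometry near $p_i$ approaches that of the Bryant tip, which should yield a uniform lower bound on $\mathrm{inj}(p_i)$. Hamilton's compactness theorem then yields a pointed smooth subsequential limit $(M_\infty,g_\infty,f_\infty,p_\infty)$: a 3d $\sy$-symmetric steady gradient soliton with nonnegative curvature operator and isotropic Ricci at $p_\infty$. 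By the rigidity of rotationally symmetric 3d steady solitons, equivalently by applying Theorem \ref{t: a ray implies Bryant soliton} to the limit (whose asymptotic cone must be a ray once isotropy at $p_\infty$ is exploited), $(M_\infty,g_\infty)$ is the Bryant soliton.

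To conclude I use monotonicity of $R_i$ along $\Gamma_i$. Differentiating the steady soliton identity gives $\nabla R = -2\Ric(\nabla f,\cdot)$; since $\Gamma_i'(s) = \nabla f_i/|\nabla f_i|$ on $\Gamma_i\setminus\{p_i\}$, we have $\tfrac{d}{ds}R_i(\Gamma_i(s)) = -2\Ric(\nabla f_i,\Gamma_i')<0$ by positive Ricci, so $\lim_{s\to\infty} R_i(\Gamma_i(s)) = \inf_{s\ge 0} R_i(\Gamma_i(s))$. Given $\epsilon>0$, pick $s_0$ with $R_\infty(\Gamma_\infty(s_0))<\epsilon/2$, possible because $R$ decays to $0$ at infinity on the Bryant soliton. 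Smooth pointed convergence yields $R_i(\Gamma_i(s_0))<\epsilon$ for $i$ large, and monotonicity gives $\sin^2(\beta_i/2) \le R_i(\Gamma_i(s_0)) < \epsilon$. Thus $\beta_i\to 0$.

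The main obstacle I expect is the compactness step, in particular the uniform lower bound on $\mathrm{inj}(p_i)$ given that 3d flying wings are globally collapsed, together with the rigid identification of the smooth limit as the Bryant soliton (leveraging $\sy$-symmetry together with isotropy of Ricci at $p_\infty$). The remainder of the argument, based on monotonicity of $R$ along $\Gamma$, is then routine.
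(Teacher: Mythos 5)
There is a genuine gap at exactly the point you flag as "the main obstacle": the identification of the Cheeger--Gromov limit as the Bryant soliton. The compactness step itself is fine (Lemma \ref{l: v_0} gives the injectivity radius bound at $p_i$, $R\le R(p_i)=1$ plus $\Rm\ge 0$ bounds the curvature, and the soliton equation with Shi's estimates gives convergence of the potentials), so you do get a subsequential limit $(M_\infty,g_\infty,f_\infty,p_\infty)$: a $\sy$-symmetric steady gradient soliton with nonnegative curvature operator, $R(p_\infty)=1$, and isotropic Ricci at $p_\infty$. But your claim that its asymptotic cone "must be a ray once isotropy at $p_\infty$ is exploited" is unsupported: nothing in the paper (or in Theorem \ref{t: a ray implies Bryant soliton}, which goes in the opposite direction, from ray to Bryant) says that $\lambda_1=\lambda_2=\lambda_3$ at the single critical point forces rotational symmetry, a ray as asymptotic cone, or any control on the cone angle. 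A priori the limit could be a flying wing whose Ricci happens to be isotropic at the tip; then $\lim_{s\rii}R_\infty(\Gamma_\infty(s))=\sin^2(\beta_\infty/2)>0$ by Theorem \ref{l: positive angle}, the scalar curvature along $\Gamma_\infty$ does not decay, and your choice of $s_0$ with $R_\infty(\Gamma_\infty(s_0))<\epsilon/2$ is impossible. In short, the needed rigidity ("eigenvalue ratio $1$ at $p$ $\Rightarrow$ Bryant") is precisely the kind of statement the paper never proves and deliberately avoids relying on; Theorem \ref{t: existence with prescribed eigenvalue} gives no relation between the prescribed ratio and the cone angle, so sending $\tilde\alpha_i\to 1$ does not by itself force $\beta_i\to 0$. (The final monotonicity step, $\tfrac{d}{ds}R(\Gamma(s))=-2|\nabla f|\Ric(\Gamma',\Gamma')\le 0$, is correct and is also used implicitly in the paper.)

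For contrast, the paper sidesteps any such rigidity by an inductive intermediate-value argument run on the \emph{expanding} solitons rather than on the eigenvalue ratio. Starting from one flying wing $(M_1,g_1)$ with angle $\alpha_1$ and the Bryant soliton $(M_0,g_0)$ normalized by $R(p)=1$, it fixes $s_1$ with $R_{g_0}(\Gamma(s_1))<\tfrac12\sin^2\tfrac{\alpha_1}{2}$ and uses the continuity of the quantity $R(\Gamma(s_1))$ along the smooth families of expanding solitons from the proof of Theorem \ref{t: existence with prescribed eigenvalue} to produce a new steady limit $(M_2,g_2)$ with $R_{g_2}(\Gamma(s_1))=\widehat R\in\bigl(R_{g_0}(\Gamma(s_1)),\tfrac12\sin^2\tfrac{\alpha_1}{2}\bigr)$. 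Theorem \ref{t: a ray implies Bryant soliton} rules out the ray case (a Bryant limit would have the wrong value of $R$ at $\Gamma(s_1)$), so $(M_2,g_2)$ is a flying wing, and then monotonicity of $R$ along $\Gamma$ together with Theorem \ref{l: positive angle} gives $\sin^2\tfrac{\alpha_2}{2}\le\widehat R<\tfrac12\sin^2\tfrac{\alpha_1}{2}$; iterating yields $\alpha_i\to 0$. If you want to keep your scheme, you would need to replace the isotropy-at-a-point argument by some quantitative statement pinning down the geometry along $\Gamma$ (as the paper does via $R(\Gamma(s_1))$), since controlling the Ricci eigenvalues only at $p_i$ does not control $\lim_{s\rii}R_i(\Gamma_i(s))$.
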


The structure of the paper is as follows.
In Section \ref{s: existence}, we prove Theorem \ref{t: existence with prescribed eigenvalue} by obtaining the steady gradient solitons as limits of appropriate expanding gradient solitons, whose construction is based on Deruelle's results \cite{De15}.
More specifically, we choose a sequence of expanding gradient solitons whose asymptotic volume ratio goes to zero, and prove that by passing to a subsequence they converge to a steady gradient soliton. 
In dimension 3, the sequence of expanding gradient solitons is between two sequences converging respectively to the 3d Bryant soliton and $\mathbb{R}\times\cigar$.

In Section \ref{s: construction}, we study the asymptotic geometry of $\sy$-symmetric 3d steady gradient solitons that are not Bryant solitons.
We prove a dimension reduction theorem which shows that the soliton smoothly converges to $\mathbb{R}\times\cigar$ at infinity.
We also show that the higher dimensional solitons from Theorem \ref{t: existence with prescribed eigenvalue} are non-collapsed.

In Section \ref{s: proof}, we first prove Theorem \ref{l: positive angle} and then use it to prove Theorem \ref{t: a ray implies Bryant soliton} and all the corollaries.
To prove
Theorem \ref{l: positive angle}, we study the variations of $\nabla f$ along certain minimizing geodesics. By the soliton equation this amounts to computing the integral of the Ricci curvature along the geodesics.
Then Theorem \ref{l: positive angle} follows by estimating this integral. 
Our main tools are the dimension reduction theorem, curvature comparison arguments, and Perelman's curvature estimates for Ricci flows with non-negative curvature operator.

Theorem \ref{t: a ray implies Bryant soliton} is proved by a bootstrap argument. Suppose the soliton is not a Bryant soliton. So the dimension reduction theorem applies.
By the $\sy$-symmetry, the soliton away from the edges is a warped-product metric with $S^1$-fibers.
First, by using the dimension reduction theorem and some computations we obtain an estimate on the length of the $S^1$-fibers, which shows that it increases slower than the square root of the distance to the critical point.  

Second, by using the estimate from the first step and similar computations we obtain a better estimate, which shows that the length function stays bounded at infinity. Since the length function is concave by the non-negativity of the curvature, this implies that the scalar curvature does not vanish along the edges. This by Theorem \ref{l: positive angle} contradicts the assumption that the asymptotic cone is a ray, hence proves Theorem \ref{t: a ray implies Bryant soliton}.

I thank my PhD advisor Richard Bamler for inspiring discussions and comments. 
I also thank John Lott, Bennet Chow, Robert Haslhofer, Alix Deruelle, Mat Langford and Guoqiang Wu for valuable comments.

\end{section}

\begin{section}{A family of non-rotaionally symmetric steady gradient solitons}\label{s: existence}
The main result in this section is Theorem \ref{t: existence with prescribed eigenvalue}.
The outline of the proof is as follows.
We first construct a sequence of smooth families of expanding gradient solitons $\{(M_{i,\mu},g_{i,\mu},p_{i,\mu}),\mu\in[0,1]\}_{i=0}^{\infty}$ with positive curvature operator, such that
$(M_{i,0},g_{i,0},p_{i,0})$ converges to a Bryant soliton, and $(M_{i,1},g_{i,1},p_{i,1})$ converges to the product of $\mathbb{R}$ and an (n-1)-dimensional Bryant soliton if $n\ge4$, or a cigar soliton if $n=3$. 
Moreover, we require that the asymptotic volume ratio of each expanding gradient solitons tends to zero uniformly as $i\rii$. 

Let $\alpha_i(\mu)$ be the quotients of the smallest and largest eigenvalues of the Ricci curvature at $p_{i,\mu}$ in $(M_{i,\mu},g_{i,\mu},p_{i,\mu})$, then $\alpha_i(\mu)$ is a smooth function in $\mu$ for each fixed $i$.
Then for any $\alpha\in(0,1)$, there is some $\mu_i\in(0,1)$ such that $\alpha_i(\mu_i)=\alpha$.
Since the asymptotic volume ratio of $(M_{i,\mu_i},g_{i,\mu_i},p_{i,\mu_i})$ goes to zero, we can show that it subconverges to an n-dimensional steady gradient soliton $(M,g,p)$ with positive curvature operator. In particular, the quotients of the smallest and largest eigenvalues of the Ricci curvature at $p$ in $(M,g,p)$ is equal to $\alpha$.

To construct the expanding gradient solitons we use
Deruelle's work \cite{De15}. He showed that for any $(n-1)$-dimensional smooth simply connected Riemannian manifold $(X_1,g_{X_1})$ with $\Rm>1$, there exists a unique expanding gradient soliton $(M_1,g_1,p_1)$ with positive curvature operator that is asymptotic to the cone $(C(X_1),dr^2+r^2 g_{X_1})$. Moreover,
there is a one-parameter smooth family of expanding gradient solitons connecting $(M_1,g_1,p_1)$ to an expanding gradient soliton $(M_0,g_0,p_0)$, whose asymptotic cone is rotationally symmetric. By Chodosh's work the soliton $(M_0,g_0,p_0)$ is rotationally symmetric, and hence is a Bryant expanding soliton \cite{chodosh}.

\subsection{Preliminaries}
In this subsection we fix some notions that will be frequently used. 
First, we recall some standard notions and facts from Alexandrov geometry:
Let $(M,g)$ be a non-negatively curved Riemannian manifold, then for any triple of points $o,p,q\in M$, the comparison angle $\widetilde{\measuredangle}poq$ is the corresponding angle formed by minimizing geodesics with lengths equal to $d(o,p),d(o,q),d(p,q)$ in Euclidean space.
Let $op,oq$ be two minimizing geodesics in $M$ between $o,p$ and $o,q$, and $\measuredangle poq$ be the angle between them at $o$, then $\measuredangle poq\ge\widetilde{\measuredangle}poq$.
Moreover, for any $p'\in op$ and $q'\in oq$, the monotonicity of angle comparison implies $\widetilde{\measuredangle}p'oq'\ge \widetilde{\measuredangle}poq$.  

For a non-negatively curved Riemannian manifold $(M,g,p)$ and two rays $\gamma_1,\gamma_2$ with unit speed starting from $p$, the limit $\lim_{r\rii}\widetilde{\measuredangle}\gamma_1(r)p\gamma_2(r)$ exists and we say it is the angle at infinity between $\gamma_1$ and $\gamma_2$.
Moreover, the space $(X,d_X)$ of equivalent classes of rays is a compact length space, where two rays are equivalent if and only if the angle at infinity between them is zero, and the distance between two rays is the limit of the angle at infinity between them.
The asymptotic cone is a metric cone over the space of equivalent classes of rays, and it is isometric to the Gromov-Hausdorff limit of any blow-down sequence of the manifold, see e.g. \cite{KL}.

Next, we define what we mean by a Riemannian manifold to be $\mathbb{Z}_2\times O(n-1)$-symmetric.
First, we define an $O(n-1)$-action on the Euclidean space $\mathbb{R}^n=\{(x_1,...,x_n):x_i\in\mathbb{R}\}$, by extending the standard $O(n-1)$-action on $\mathbb{R}^{n-1}=\{x_n=0\}\subset\mathbb{R}^n$ in the way such that it fixes the $x_n$-axis.
Then we define a $\mathbb{Z}_2\times O(n-1)$-action on $\mathbb{R}^n$ by futhermore defining a $\mathbb{Z}_2$-action to be generated by a reflection that fixes the hypersurface $\{x_n=0\}$. 

Let $\Gamma_0=\{x_1=\cdots=x_{n-1}=0\}$, $N_0=\{x_1=\cdots=x_{n-2}=0,x_{n-1}> 0\}$ and $\Sigma_0=\{x_n=0\}$.
Then $\Gamma_0$ is the fixed point set of the $O(n-1)$-action, $\Sigma_0$ is the fixed point set of the $\mathbb{Z}_2$-action, and $N_0$ is one of the two connected components of the fixed point set of a subgroup isomorphic to $O(n-2)$.
\begin{defn}\label{d: symmetry}
We say that an $n$-dimensional Riemannian manifold $(M^n,g)$ is  $\mathbb{Z}_2\times O(n-1)$-symmetric if there exist an isometric  $\mathbb{Z}_2\times O(n-1)$-action, and a diffeomorphism $\Phi: M^n\rightarrow \mathbb{R}^n$ such that $\Phi$ is equivariant with the two actions, where the action on $\mathbb{R}^n$ is defined as above.

Let $\Gamma=\Phi^{-1}(\Gamma_0)$, $\Sigma=\Phi^{-1}(\Sigma_0)$, and $N=\Phi^{-1}(N_0)$. Then it is easy to see that
\begin{enumerate}
    \item $\Gamma$ is a geodesic that goes to infinity at both ends.
    \item $\Sigma$ is a rotationally symmetric $(n-1)$-dimensional totally geodesic submanifold. 
    \item $N$ is a totally geodesic surface diffeomorphic to $\mathbb{R}^2$. 
    \item $\Phi^{-1}(0)$ is the unique fixed point of the $\mathbb{Z}_2\times O(n-1)$-action, at which $\Gamma$ intersects orthogonally with $\Sigma$.
\end{enumerate}
Moreover, consider the projection $\pi:M\rightarrow N$, which maps a point $x\in M$ to a point $y\in N$, which is the image of $x$ under some action in $O(n-1)$. Equip $N$ with the induced metric $g_N$, then $\pi$ is a Riemannian submersion, and $N$ is an integral surface of the horizontal distribution. So there is a smooth positive function $\varphi:N\rightarrow\mathbb{R}$ such that $g=g_N+\varphi^2 g_{S^{n-2}}$ on $M\setminus\Gamma$, where $g_{S^{n-2}}$ is the standard round metric on $S^{n-2}$.

\end{defn}

In this paper, 
we study n-dimensional expanding or steady gradient soliton $(M^n,g)$ with non-negative curvature operator, whose potential function $f$ has a critical point $p$. We denote it by a quadruple $(M^n,g,f,p)$ (and sometimes a triple $(M^n,g,p)$). 
Note that $R$ attains its maximum at $p$ by the identity $R+|\nabla f|^2=\textnormal{const.}$, and $p$ is the unique critical point of $f$ if $\Rm>0$.

We assume $(M^n,g,f,p)$ is $\mathbb{Z}_2\times O(n-1)$-symmetric, and fix the notions $\Gamma,N,\varphi,\Sigma$ from above, and assume $\Gamma:(-\infty,\infty)\rightarrow M$ has unit speed and $\Gamma(0)=p$.
Assume $\Rm>0$. Then it is easy to see that $p$ is the unique point fixed by the $\mathbb{Z}_2\times O(n-1)$-action. Moreover, by the soliton equation $\nabla^2 f=\Ric+cg$, $c\ge0$, it follows that the potential function $f$ is invariant under the actions. So the geodesic
$\Gamma$, and all the unit speed geodesics in $\Sigma$ starting from $p$ are integral curves of $\frac{\nabla f}{|\nabla f|}$.

Moreover, use $i,j,k,l$ for indices on $N$, and $\alpha,\beta$ and $g_{\alpha\beta}$ for indices and metric components on $S^{n-2}$ with the standard round metric with radius one. Then by a computation the nonzero components of the curvature tensor of $(M\setminus\Gamma,g)$ are
\begin{equation}\label{e: computation}
   \begin{split}
       R_{ijkl}^M =R_{ijkl}^N,\quad
       R^M_{i\alpha\beta j}=-g_{\alpha\beta}(\varphi \nabla^2_{i,j}\varphi),\quad R^M_{\alpha\beta\beta\alpha}=(1-|\nabla\varphi|^2)\varphi^2(g_{\alpha\alpha}g_{\beta\beta}-g_{\alpha\beta}^2).
   \end{split} 
\end{equation}
So by $\Rm\ge0$ and the second equation we have $\nabla^2\varphi\le 0$ and $\varphi$ is concave.

\subsection{Proof of Theorem \ref{t: existence with prescribed eigenvalue}}



To prove Theorem \ref{t: existence with prescribed eigenvalue}, we will take a limit of a sequence of expanding gradient solitons with $R(p)=1$, where $p$ is the critical point of the potential function. To do this, we need an injectivity radius lower bound and a uniform curvature bound.
The curvature bounds follows directly from $R_{\max}=R(p)=1$, and the injectivity radius estimate follows from
the next lemma.

\begin{lem}\label{l: v_0}
There exists $C>0$ such that the following holds: Let $(M^n,g,f,p)$ be a $\mathbb{Z}_2\times O(n-1)$-symmetric n-dimensional expanding (or steady) gradient soliton with positive curvature operator. Suppose $R(p)=1$. Then $vol(B(p,1))\ge C^{-1}$ and $\textnormal{inj}(p)\ge C^{-1}$. 
\end{lem}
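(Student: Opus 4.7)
The plan is to bound $\mathrm{vol}(B(p,1))$ from below directly using the warped-product structure $g=g_N+\varphi^2 g_{S^{n-2}}$ forced by the $\mathbb{Z}_2\times O(n-1)$-symmetry, and then to deduce the injectivity radius bound via Cheeger's lemma. From $\Rm\ge 0$ and the fact (recorded in the preliminaries) that $R$ attains its maximum at $p$, every sectional curvature satisfies $0\le K\le R\le R(p)=1$, so $|\Rm|\le C_n$ uniformly on $M$.

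To bound the volume, I would introduce Fermi coordinates $(s,u)$ along $\Gamma\subset N$, in which $g_N=du^2+\phi(s,u)^2 ds^2$ with $\phi(s,0)=1$, $\partial_u\phi(s,0)=0$. Smoothness of $g$ across $\Gamma$ forces $\varphi(s,0)=0$ and $\partial_u\varphi(s,0)=1$. The middle identity in \eqref{e: computation} identifies the sectional curvature $K(\partial_u,\partial_\alpha)$ with $-\partial_u^2\varphi/\varphi$ (the Christoffel correction vanishes because the $u$-lines are geodesics), so the bound $0\le K\le 1$ gives the ODE inequality $-\varphi\le \partial_u^2\varphi\le 0$ along each $u$-line. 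A Sturm comparison against $\ddot\sigma+\sigma=0$, $\sigma(0)=0$, $\dot\sigma(0)=1$, then yields
\begin{equation*}
    \varphi(s,u)\ge \sin u, \qquad u\in[0,\pi],
\end{equation*}
uniformly in $s$; the analogous comparison applied to the Jacobi field $\phi$ (which satisfies $\partial_u^2\phi+K_N\phi=0$ with $0\le K_N\le 1$ by the Gauss equation for the totally geodesic $N$) gives $\phi(s,u)\ge \cos u$ for $u\le \pi/2$.

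Next I would use that the orbit projection $\pi\colon M\to N$ is a $1$-Lipschitz Riemannian submersion and $N$ is totally geodesic, so $d_M(p,(s,u,\theta))=d_N(p,(s,u))\le |s|+u$. Hence the box $\{|s|\le 1/4,\ 0\le u\le 3/4\}$ lies inside $B_N(p,1)$ and its $\pi$-preimage lies inside $B(p,1)\subset M$. Integrating the warped-product volume form $\omega_{n-2}\,\varphi^{n-2}\phi\,du\,ds$ over this box and inserting the lower bounds on $\varphi$ and $\phi$ from the previous step produces a purely dimensional lower bound $\mathrm{vol}(B(p,1))\ge C^{-1}$. Cheeger's lemma, fed the uniform curvature bound $|\Rm|\le C_n$ together with this volume bound, then delivers $\textnormal{inj}(p)\ge C^{-1}$. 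The main technical subtlety is the Sturm comparison of the second step and the verification that the Fermi coordinates remain valid on a scale independent of the particular soliton; both hinge on the universal upper bound $K\le 1$, which controls the focal radius of $\Gamma$ inside $N$.
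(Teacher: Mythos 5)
Your route is genuinely different from the paper's (the paper never uses Fermi coordinates: it transports the Jacobi bound to a point $q\in\Sigma$ with $d(p,q)=1$, shows $d(q,\Gamma)\ge c$ via concavity of $\varphi$ and $\partial_u\varphi=1$ on $\Gamma$, and then runs an Alexandrov angle/volume comparison in the two--dimensional $N$ plus concavity of $\varphi$), but your argument has a gap at the decisive step. Converting the coordinate integral $\int\omega_{n-2}\varphi^{n-2}\phi\,du\,ds$ over the box into a lower bound for $\mathrm{vol}(B(p,1))$ requires the parametrization $E(s,u,\theta)=\theta\cdot\exp^N_{\Gamma(s)}(u\,\nu(s))$ to be \emph{injective} on the box, and you justify the validity of the Fermi coordinates only by the focal radius estimate coming from $K\le 1$. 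Absence of focal points makes the normal exponential map of $\Gamma$ inside $N$ an immersion for $u<\pi/2$, but it does not prevent normal geodesics emanating from different points of $\Gamma$ from meeting there, i.e.\ it does not control the cut locus of $\Gamma$ in $N$. If that cut locus intruded to a tiny distance from $\Gamma$ near $p$ --- which is exactly the collapse scenario the lemma is supposed to exclude, e.g.\ $\overline{N}$ looking like a very thin half-cylinder with geodesic boundary $\Gamma$ --- then the image of your box would be a region of tiny volume covered with large multiplicity, and the integral would not bound $\mathrm{vol}(B(p,1))$ from below. So injectivity here is not a routine technicality controlled by $K\le1$; it is essentially the non-collapsing statement itself, and as written the proof is circular at this point.

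The good news is that the gap is repairable with tools you already have. Your Sturm bound $\varphi(s,u)\ge\sin u$ is valid along \emph{every} geodesic leaving $\Gamma$ orthogonally, for $u\in[0,\pi]$, since its derivation uses only $\nabla^2\varphi=-K\varphi$, $0\le K\le1$, and the boundary data $\varphi=0$, $\partial_u\varphi=1$ on $\Gamma$; because $\varphi$ vanishes on $\Gamma$, this forbids any geodesic meeting $\Gamma$ orthogonally at both endpoints with length $<\pi$. Combining this with the focal bound through a Klingenberg-type argument for the cut locus of a submanifold (a shortest failure of injectivity over the compact box either produces a focal point at distance $\le 3/4<\pi/2$ or, after the first-variation argument forcing angle $\pi$ at the meeting point, concatenates to an orthogonal chord of length $\le 3/2<\pi$) yields injectivity of $E$ on $\{0\le u\le 3/4\}$, after which your integration and the Cheeger-lemma step for $\textnormal{inj}(p)$ go through. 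With that step supplied, your proof is correct and arguably more quantitative than the paper's comparison-geometry argument, at the price of the extra cut-locus discussion.
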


\begin{proof}
We shall use $C$ to denote all positive constants, whose value may vary from in lines.
Let $\gamma:[0,\infty)\ri \Sigma$ be a unit speed geodesic emanating from $p$ in $\Sigma$ such that $\gamma$ is contained in $N$. Then by the curvature assumption and the Jacobi comparison we get $\varphi(\gamma(1))\ge c:=\sin 1>0$. Since $\varphi(\gamma(s))$ increases in $s$ and $d(p,\gamma(1))\le 1$, we can find $s_0\ge 1$ such that $d(p,\gamma(s_0))=1$ and $\varphi(\gamma(s_0))\ge c$.

Let $q=\gamma(s_0)$. We claim $d(q,\Gamma)\ge c$:
First, suppose $d(q,\Gamma)=d(q,x)$ for some $x\in\Gamma$. Let $\sigma$ be the unit speed minimizing geodesic from $x$ and $q$, then by the first variation formula we see that $\sigma$ intersects with $\Gamma$ orthogonally at $x$.
Consider all the preimages of $\sigma$ under the Riemannian submersion $M\ri N$, which
form a smooth submanifold with induced metric $dr^2+\varphi^2(\sigma(r))g_{S^{n-2}}$.
Then by the vertical tangent condition at $x$, we have $\left.\frac{d}{dr}\right|_{r=0}\varphi(\sigma(r))=1$, which by $\varphi(q)\ge c$ and the concavity of $\varphi$ implies $d(q,\Gamma)\ge c$.

Choose some $y\in\Gamma$ such that $d(p,y)=1$. Let $pq,yp,yq$ be minimizing geodesics between these points. By replacing $py$ with its image under a suitable $\mathbb{Z}_2\times O(n-1)$-action, we may assume $\measuredangle ypq\le\frac{\pi}{2}$.
So by angle comparison we get $\widetilde{\measuredangle} ypq\le\measuredangle ypq\le\frac{\pi}{2}$, and hence $\widetilde{\measuredangle} yqp\ge \frac{\pi}{4}$ since $d(p,y)=d(p,q)$. 
So for some $y'\in yq$, $p'\in pq$ such that $d(y',q)=d(p',q)= c$ we have $\widetilde{\measuredangle} y'qp'\ge \widetilde{\measuredangle} yqp\ge\frac{\pi}{4}$, and hence $d(y',p')\ge C^{-1}$, which by volume comparison implies $vol_N(B_N(q,c))\ge C^{-1}$.
Moreover, since $\varphi$ is concave and $\varphi(q)=1$, we have $\varphi\ge C^{-1}$ on $B_N(q,\frac{c}{2})$, integrating it on $B_N(q,c)$ it implies $vol_M(B_M(p,1))\ge C^{-1}$.
The assertion about the injectivity radius now follows from the volume lower bound and the curvature bound $R\le R(p)=1$.
\end{proof}

Recall that if $(M^n,g,f,p)$ is an expanding gradient soliton satisfying 
\begin{equation}
    \Ric+\lambda g=\nabla^2 f
\end{equation}
for some $\lambda>0$. Then it generates a Ricci flow $g(t):=(2\lambda t)\phi^*_{t-\frac{1}{2\lambda}}g$, $t\in(0,\infty)$, where $\{\phi_s\}_{s\in\left(-\frac{1}{2\lambda},\infty\right)}$ is the one-parameter diffeomorphisms generated by the time-dependent vector field $\frac{-1}{1+2\lambda s}\nabla f$ with $\phi_0$ the identity. Moreover, $g(t)$ is an expanding gradient soliton  satisfying
\begin{equation}
    \Ric(g(t))+\frac{1}{2t}g(t)=\nabla^2 f_t,
\end{equation}
where $f_t=\phi^*_{t-\frac{1}{2\lambda}}f$.

Let $(M^n_i,g_i,f_i,p_i)$ be a sequence of $\mathbb{Z}_2\times O(n-1)$-symmetric expanding gradient solitons with positive curvature operator, which satisfies $R(p_i)=1$ and the asymptotic volume ratio $\AVR(g_i)\ri0$ as $i\rii$.
Let $C_i>0$ be the constant such that $(M^n_i,g_i,f_i,p_i)$ satisfies the soliton equation
\begin{equation}
    \Ric(g_i)+\frac{1}{2C_i}g_i=\nabla^2f_i.
\end{equation}
Then the following lemma shows $C_i\rii$ as $i\rii$, and hence there is a subsequence of $(M^n_i,g_i,f_i,p_i)$ smoothly converging to a steady gradient soliton.

\begin{lem}\label{l: expanding converging to steady}
Let $(M^n_i,g_i,f_i,p_i)$ be a sequence of $\mathbb{Z}_2\times O(n-1)$-symmetric expanding gradient solitons with positive curvature operator. Suppose $R_{g_i}(p_i)=1$ and $\AVR(g_i)\rightarrow0$ as $i\rightarrow\infty$. Then a subsequence of $(M_i,g_i,f_i,p_i)$ smoothly converges to an n-dimensional $\mathbb{Z}_2\times O(n-1)$-symmetric steady gradient soliton $(M,g,f,p)$.
\end{lem}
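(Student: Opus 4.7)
The plan is to extract a smooth pointed Cheeger--Gromov subsequential limit of $(M_i,g_i,f_i,p_i)$ and verify it is a steady gradient soliton with the stated symmetry. Writing the expanding soliton equation as $\Ric(g_i) + \lambda_i g_i = \nabla^2 f_i$ with $\lambda_i = \frac{1}{2C_i}$, the crux is to show $C_i \to \infty$, since then the expanding equation degenerates into the steady equation in the limit.

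First, I would apply Hamilton's compactness theorem. Since $\Rm > 0$ and the scalar curvature attains its maximum at the critical point $p_i$ with $R_{g_i}(p_i) = 1$, we have $|\Rm_{g_i}| \le c(n)$; Lemma \ref{l: v_0} gives an injectivity radius lower bound at $p_i$; Shi's estimates applied to the Ricci flow generated by each soliton yield higher-derivative bounds. Along a subsequence $(M_i,g_i,p_i)$ then converges smoothly in the pointed Cheeger--Gromov sense to a complete $n$-dimensional manifold $(M,g,p)$ with $\Rm \ge 0$. The $\mathbb{Z}_2 \times O(n-1)$-symmetry passes to the limit: the isometric actions all fix $p_i$ and hence converge to an isometric action on $(M,g)$ fixing $p$, and the orbit structure of Definition \ref{d: symmetry} (including the existence of an equivariant diffeomorphism to $\mathbb{R}^n$) is preserved since the limit is smooth of the same dimension.

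Next I would show $C_i \to \infty$. The key ingredient is a quantitative lower bound on $\AVR$ for expanding gradient solitons with non-negative Ricci curvature: by Deruelle's theory \cite{De15}, an expander with $\Ric \ge 0$, $R(p) = 1$, and soliton constant $\lambda \ge c_0 > 0$ has asymptotic volume ratio bounded below by a positive constant depending only on $n$ and $c_0$. Intuitively, $\nabla^2 f \ge \lambda g$ forces $f \ge \frac{\lambda}{2} d(p,\cdot)^2$ and hence a Euclidean-type opening at infinity; the asymptotic cone $C(X)$ is non-degenerate and $\AVR$ equals a fixed multiple of $vol(X)$. Thus $\AVR(g_i) \to 0$ with $R(p_i) = 1$ forces $\lambda_i \to 0$, i.e., $C_i \to \infty$, after passing to a subsequence.

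Finally, with $C_i \to \infty$ in hand, normalize $\tilde f_i := f_i - f_i(p_i)$, so that $\tilde f_i(p_i) = 0$ and $\nabla \tilde f_i(p_i) = 0$. The uniform bound $|\nabla^2 \tilde f_i|_{g_i} \le |\Ric(g_i)|_{g_i} + n\lambda_i \le c(n)$, combined with integration from $p_i$ along radial geodesics, gives uniform local $C^{k,\alpha}$ estimates on $\tilde f_i$; along a further subsequence $\tilde f_i$ converges in $C^{\infty}_{\mathrm{loc}}$ to a function $f$ on $(M,g)$, and passing $\lambda_i \to 0$ to the limit in the soliton equation yields $\Ric(g) = \nabla^2 f$, as desired. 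The main obstacle will be the quantitative $\AVR$ lower bound in the third step; the remainder is routine compactness and limit-taking.
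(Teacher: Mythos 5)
Your overall scheme (compactness of the static metrics via the curvature bound $R\le R(p_i)=1$ and Lemma \ref{l: v_0}, then showing the expander constant $\lambda_i=\frac{1}{2C_i}\to 0$, then passing the soliton equation to the limit) matches the paper's strategy, and your first and last steps are essentially fine. The genuine gap is at the crux, the claim that $\lambda_i\to 0$. You justify it by a ``quantitative lower bound $\AVR\ge c(n,c_0)$ for expanders with $\Ric\ge 0$, $R(p)=1$, $\lambda\ge c_0$'' attributed to Deruelle's theory; no such statement is in \cite{De15}, and your heuristic for it does not work: quadratic growth of $f$ (from $\nabla^2 f\ge\lambda g$) controls distances along the gradient flow, i.e.\ it says the end opens up linearly, but it gives no lower bound on volumes. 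Indeed, the expanders used in this very paper are asymptotic to cones over cross-sections of arbitrarily small volume, so with the normalization $\lambda=\frac12$ fixed their $\AVR$ is arbitrarily small while the cone still ``opens up''; the nontrivial content of the lemma is precisely that, once one also normalizes $R(p_i)=1$, small $\AVR$ forces small $\lambda_i$. As stated (constant depending only on $n$ and $c_0$, with no non-collapsing input at the basepoint), your key ingredient is essentially an unproved quantitative strengthening of the conclusion.

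Two repairs are available. The paper's route is soft: assume $C_i\le C$, run the associated expanding Ricci flows $\widetilde g_i(t)$, use Lemma \ref{l: v_0} and $R_{\widetilde g_i(t)}\le C/t$ to extract a limit expanding soliton $(M_\infty,g_\infty(t),f_\infty)$ with $\Ric\ge 0$, identify the asymptotic cones as the uniform pointed Gromov--Hausdorff limits of the flows as $t\searrow 0$, and use weak continuity of the Hausdorff measure to conclude the limit cone has zero volume at unit scale, contradicting Hamilton's qualitative theorem that such an expander has $\AVR>0$. Alternatively, one can make your step honest by proving a quantitative version of Hamilton's $\AVR$ estimate: with the normalization $R+|\nabla f|^2=2\lambda f$, the level-set computation $n\lambda V(c)\le\int_{\{f\le c\}}\Delta f=\int_{\{f=c\}}|\nabla f|\le 2\lambda c\,V'(c)$ shows $c\mapsto V(c)/c^{n/2}$ is nondecreasing, whence $\AVR\ge c(n)\lambda^{n/2}V(c_0)/c_0^{n/2}$ with $c_0\le \frac{1}{2\lambda}+C$; but this only becomes a bound in terms of $n$ and $c_0$ after inserting the basepoint volume bound $vol(B(p,1))\ge C^{-1}$ from Lemma \ref{l: v_0} (which uses the $\mathbb{Z}_2\times O(n-1)$-symmetry), an input your proposal omits at this step. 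Either fix closes the gap; without one of them the argument does not go through.
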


\begin{proof}
Suppose $(M^n_i,g_i,f_i,p_i)$ satisfies 
\begin{equation}\label{e: g_i}
    \Ric(g_i)+\frac{1}{2C_i}g_i=\nabla^2f_i
\end{equation}
for some constant $C_i>0$.
Let $(M_i,\widetilde{g}_i(t),f_{i,t},p_i)$, $t\in(0,\infty)$, be the Ricci flow generated by $(M_i,g_i,f_i,p_i)$, where $\widetilde{g}_i(t)=\frac{t}{C_i}\phi^*_{i,t-C_i}g_i$, $f_{i,t}=\phi^*_{i,t-C_i} f_i$, and $\{\phi_{i,s}\}_{s\in(-C_i,\infty)}$ is the family of diffeomorphisms generated by $\frac{-s}{s+C_i}\nabla f_i$ with $\phi_0$ the identity. By a direct computation we can show
\begin{equation}
    \Ric(\widetilde{g}_i(t))+\frac{1}{2t}\widetilde{g}_i(t)=\nabla^2f_{i,t},
\end{equation}
for all positive time $t$. In particular, we have $\widetilde{g}_i(C_i)=g_i$ and $R_{\widetilde{g}_i(1)}(p_i)=C_i$.

We claim that $C_i\rightarrow\infty$ as $i\rightarrow\infty$:
Suppose this is not true. Then by passing to a subsequence we may assume $C_i\le C$ for some constant $C>0$ and all $i$. We shall use $C$ to denote all positive constant that is independent of $i$.

First, by Lemma \ref{l: v_0} we have $\textnormal{inj}_{\widetilde{g}_i(1)}(p_i)\ge C^{-1}$ and 
\begin{equation}
    R_{\widetilde{g}_i(t)}(x)\le R_{\widetilde{g}_i(t)}(p_i)\le \frac{C}{t},
\end{equation}
for all $x\in M_i$ and $t\in(0,\infty)$.
So by Hamilton's compactness for Ricci flow we may assume after passing to a subsequence that $(M_i,\widetilde{g}_i(t),p_i)$, $t\in(0,\infty)$, converges to a smooth
Ricci flow $(M_{\infty},g_{\infty}(t),p_{\infty})$ on $(0,\infty)$.
Assume $f_{i,1}(p_i)=0$, then by $|\nabla f_{i,1}|(p_i)=0$ and $\Ric_{\widetilde{g}_i(1)}+\frac{1}{2}\widetilde{g}_i(1)=\nabla^2f_{i.1}$, we can apply Shi's derivative estimates to get bounds for higher derivatives of curvatures, and thus bounds for higher derivatives of $f_{i,1}$. So we may assume $f_{i,1}$ converges to a smooth function $f_{\infty}$ satisfying $\Ric_{g_{\infty}(1)}+\frac{1}{2}g_{\infty}(1)=\nabla^2 f_{\infty}$, which makes $(M_{\infty},g_{\infty}(t),p_{\infty})$ an expanding gradient soliton.  
Since $R_{\widetilde{g}_i(t)}\le\frac{C}{t}$, it follows that $R_{g_{\infty}(t)}\le\frac{C}{t}$.

This curvature condition combined with Hamilton's distance distortion estimate gives us a uniform double side control on $d_{\widetilde{g}_i(t)}$ and $d_{g_{\infty}(t)}$, which implies the following pointed Gromov-Hausdorff convergences
\begin{equation}
(M_i,\widetilde{g}_i(t),p_i)\xrightarrow[t\searrow0]{pGH}(C(X_i),o_i),\quad
    (M_{\infty},g_{\infty}(t),p_{\infty})\xrightarrow[t\searrow0]{pGH}(C(X),o),\;\;
\end{equation}
where $X_i,X$ are some compact length spaces, and $o_i,o$ are the cone points of the metric cones $C(X_i),C(X)$. In particular, the first convergence is uniform for all $i$, which implies $(C(X_i),o_i)\xrightarrow[i\rii]{pGH}(C(X),o)$.

Let $\mathcal{H}_n(\cdot)$ denote the n-dimensional Hausdorff measure. Then since it is weakly continuous under the Gromov-Hausdorff convergence \cite{BGP}, we have
\begin{equation}
    \mathcal{H}_n(B(o,1))=\lim_{i\rightarrow\infty}vol(B(o_i,1))=\lim_{i\rightarrow\infty}\AVR(C(X_i))=\lim_{i\rightarrow\infty}\AVR(g_i)=0.
\end{equation}
However, since $(M_{\infty},g_{\infty})$ is an expanding gradient soliton with $\Ric\ge0$, it must have positive asymptotic volume ratio by a result of Hamilton \cite[Prop 9.46]{HaRF}.
So by volume comparison we have
\begin{equation}
    \mathcal{H}_n(B(o,1))=\lim_{t\searrow0}\mathcal{H}_n(B_t(p_{\infty},1))\ge \AVR(g_{\infty}(t))>0,
\end{equation}
a contradiction. This proves the claim at beginning that $C_i\rightarrow\infty$ when $i\rightarrow\infty$.

Let $\widehat{g}_i(t)=\widetilde{g}_i(t+C_i)$, $t\in(-C_i,\infty)$, then $\widehat{g}_i(0)=g_i$,  $R_{\widehat{g}_i(0)}(p_i)=1$, and
\begin{equation}
    R_{\widehat{g}_i(t)}(x)=R_{\widetilde{g}_i(t+C_i)}(x)\le\frac{C_i}{t+C_i}\le 2,
\end{equation}
for all $x\in M_i$ and $t\in(-\frac{C_i}{2},\infty)$.
By Lemma \ref{l: v_0} there is a subsequence of $(M_i,\widehat{g}_i(t),p_i)$ which smoothly converges to a Ricci flow $(M,g(t),p)$, $t\in(-\infty,\infty)$.
Moreover, by the equation \eqref{e: g_i} and Shi's derivative estimates we obtain uniform bounds for all higher derivatives of $f_i$. Since $C_i\rii$ as $i\rii$, we may assume by passing to a subsequence that $f_i$ smoothly converges to a function $f$ on $M$ which satisfies $\Ric(g)=\nabla^2f$. So $(M,g(0),f,p)$ is a steady gradient soliton.
The $\mathbb{Z}_2\times O(n-1)$-symmetry is an easy consequence of the smooth convergence.

\end{proof}

Now we prove Theorem \ref{t: existence with prescribed eigenvalue}. 

\begin{proof}[Proof of Theorem \ref{t: existence with prescribed eigenvalue}]

We claim that there is a sequence of smooth families of $\mathbb{Z}_2\times O(n-1)$-symmetric Riemannian manifolds $\{X_{i,\mu},\mu\in[0,1]\}_{i=0}^{\infty}$ diffeomorphic to $S^{n-1}$, satisfying the following:
\begin{enumerate}
    \item $X_{i,0}$ is a rescaled round (n-1)-sphere;
    \item $\diam(X_{i,1})\rightarrow \pi$ as $i\rii$;
    \item $K(X_{i,\mu})> 1$, where $K$ denotes the sectional curvature;
    \item $\lim_{i\rii}\sup_{\mu\in[0,1]}vol(X_{i,\mu})=0$.
\end{enumerate}
We say $X_{i,\mu}$ is $\mathbb{Z}_2\times O(n-1)$-symmetric if it is rotationally symmetric, and there is a $\mathbb{Z}_2$-isometry that maps the two centers of rotations to each other. 
We prove the claim in dimension $n=3$ below, and the case for $n>3$ follows in the same way.

First, we construct a sequence of smooth $\sy$-symmetric surfaces $\{X_{i,1}\}_{i=1}^{\infty}$ with $K(X_{i,1})>1$, $\diam(X_{i,1})\ri\pi$ and $vol(X_{i,1})\ri0$ as $i\rii$.
For each large $i\in\mathbb{N}$, 
let $g_i$ be the metric of the surface of revolution $(i^{-1}\sin r\cos\theta,i^{-1}\sin r\sin\theta,r)$, $r\in[0,\pi]$ and $\theta\in[0,2\pi]$.
Then by a direct computation we see that $K_{\min}(g_i)=(i^{-2}+1)^{-2}$.
Then by some standard smoothing arguments and suitable rescalings, we obtain the desired sequence $\{X_{i,1}\}_{i=1}^{\infty}$.

Second, for each large $i$, let $h_i(t)$ be the Ricci flow with $h_i(0)=X_{i,1}$, and assume its curvature blows up at $T_i>0$.
Let $K_i(t)$ be the minimum of $K(h_i(t))$, and $V_i(t)$ be the volume with respect to $h_i(t)$. Then we can find
a smooth function $r_i:[0,T_i]\rightarrow\mathbb{R}_+$ such that $r_i(0)=1$, $r_i(t)\le\min\{\sqrt{\frac{K_i(t)}{K_i(0)}},\sqrt{\frac{V_i(0)}{V_i(t)}}\}$ for all $t\in[0,T_i]$, and $r_i(t)=\sqrt{\frac{V_i(0)}{V_i(t)}}$ when $t$ is close to $T_i$ (note $\sqrt{\frac{V_i(0)}{V_i(t)}}<\sqrt{\frac{K_i(t)}{K_i(0)}}$ when $i$ is sufficiently large since $\lim_{i\rii}V_i(0)=0$ and $\limsup_{i\rii}K_i(0)\le1$). 
Then the rescaled Ricci flow $r_i^2(t)h_i(t)$ converges to a smooth round 2-sphere when $t\ri T_i$.
Moreover, by letting $X_{i,\mu}=r_i^2(T_i(1-\mu))h_i(T_i(1-\mu))$, $u\in[0,1]$,
we obtain a smooth family of $\sy$-symmetric surfaces $\{X_{i,u}\}$ with $K(X_{i,\mu})> 1$, $vol(X_{i,\mu})\le  vol(X_{i,1})$, and $X_{i,0}$ is a round $2$-sphere.
So the claim holds.

Therefore, for each fixed $i$, by applying Deruelle's result  \cite[Theorem 1.4]{De15} to $X_{i,\mu}$, $\mu\in[0,1]$, we obtain a smooth family of n-dimensional expanding gradient solitons $(M_{i,\mu},g_{i,\mu},p_{i,\mu}),\mu\in[0,1]$, with positive curvature operator, and asymptotic to $C(X_{i,\mu})$.
Moreover, by \cite[Theorem 1.3]{De15}, the Ricci flow generated by an expanding gradient soliton coming out of $C(X_{i,\mu})$ is unique. 
So any isometry of $C(X_{i,\mu})$ is an isometry at any positive time of the Ricci flow.
In particular, it implies that $(M_{i,\mu},g_{i,\mu},p_{i,\mu})$ is $\mathbb{Z}_2\times O(n-1)$-symmetric and  $(M_{i,0},g_{i,0},p_{i,0})$ is rotationally symmetric.

By some suitable rescalings we may assume $R(p_{i,\mu})=1$, and by item (4) we have $\lim_{i\rii}\sup_{\mu\in[0,1]}\AVR(g_{i,\mu})=\lim_{i\rii}\sup_{\mu\in[0,1]}\AVR(C(X_{i,\mu}))=0$. 
So we can apply Lemma \ref{l: expanding converging to steady} and by passing to a subsequence, we may assume $(M_{i,0},g_{i,0},p_{i,0})$ and $(M_{i,1},g_{i,1},p_{i,1})$ smoothly converge to two steady gradient solitons  $(M_{\infty,0},g_{\infty,0},p_{\infty,0})$ and $(M_{\infty,1},g_{\infty,1},p_{\infty,1})$ respectively.
On the one hand,
since $(M_{i,0},g_{i,0},p_{i,0})$ is rotationally symmetric, it follows that $(M_{\infty,0},g_{\infty,0},p_{\infty,0})$ is rotationally symmetric, and hence is a Bryant soliton, see e.g. \cite{HaRF}. 

On the other hand, since $\diam(X_{i,1})\ri\pi$ when $i\rii$, the asymptotic cone for each $(M_{i,1},g_{i,1},p_{i,1})$ converges to a half-plane, or equivalently a cone over the interval $[0,\pi]$.
So for each $j\in\mathbb{N}$ and all sufficiently large $i$, we can find points $q_{i,j},r_{i,j}\in M_{i,1}$ such that $d(q_{i,j},p_{i,1})=d(r_{i,j},p_{i,1})=j$ and $\widetilde{\measuredangle}q_{i,j}p_{i,1}r_{i,j}\ge \pi-j^{-1}$.
Passing to the limit we obtain points $q_j,r_j\in M_{\infty,1}$ with $d(q_j,p_{\infty,1})=d(r_j,p_{\infty,1})=j$ and $\widetilde{\measuredangle}q_jp_{\infty,1}r_j\ge \pi-j^{-1}$.
Then letting $j\rii$ and passing to a subsequence, the geodesics $p_{\infty,1}q_j,p_{\infty,1}r_j$ converge to two rays which together form a line passing through $p_{\infty,1}$.
Then by the strong maximum principle of Ricci flow, $(M_{\infty,1},g_{\infty,1})$ is the product of $\mathbb{R}$ and an (n-1)-dimensional rotationally symmetric steady gradient soliton with positive curvature operator, which is an (n-1)-dimensional Bryant soliton if $n>3$, and a cigar soliton if $n=3$, see e.g. \cite{HaRF}. 

For a $\mathbb{Z}_2\times O(n-1)$-symmetric expanding or steady gradient soliton $(M,g,p)$ with non-negative curvature operator, we write $\lambda_1(g),\lambda_2(g)=\dots=\lambda_n(g)$ to be the $n$ eigenvalues of the Ricci curvature at $p$ in the directions of $\Gamma'(0)$ and its orthogonal complement subspace $ T_{p}\Sigma=(\Gamma'(0))^{\perp}$.
For any $\alpha\in(0,1)$, since $\frac{\lambda_1}{\lambda_2}(g_{\infty,0})=1$ and $\frac{\lambda_1}{\lambda_2}(g_{\infty,1})=0$,
we have $\frac{\lambda_1}{\lambda_2}(g_{i,0})>\alpha$ and $\frac{\lambda_1}{\lambda_2}(g_{i,1})<\alpha$ when $i$ is sufficiently large.
Since $\frac{\lambda_1}{\lambda_2}(g_{i,\mu})$ is a continuous function of $\mu$ for each fixed $i$, there is some $\mu_i\in(0,1)$ such that $\frac{\lambda_1}{\lambda_2}(g_{i,\mu_i})=\alpha$.
Applying Lemma \ref{l: expanding converging to steady} to the sequence $(M_{i,\mu_i},g_{i,\mu_i},p_{i,\mu_i})$ and taking a limit, we obtain an n-dimensional $\mathbb{Z}_2\times O(n-1)$-symmetric steady gradient soliton $(M,g,p)$ with $\frac{\lambda_1}{\lambda_2}(g)=\alpha$. This proves Theorem \ref{t: existence with prescribed eigenvalue}.
\end{proof}

\end{section}

\begin{section}{Asymptotic geometry of steady gradient solitons}\label{s: construction}
In this section, we study the asymptotic geometry of n-dimensional $\mathbb{Z}_2\times O(n-1)$-symmetric steady gradient solitons.
We show that such a soliton strongly dimension reduces along an edge to an $(n-1)$-dimensional ancient Ricci flow (see below for definitions).
In particular, when $n=3$, the 2d ancient Ricci flow is the cigar soliton, assuming in additional that the scalar curvature does not vanish at infinity.
See also \cite{BDM} for discussions of dimension reductions of 4d non-collapsed steady gradient solitons.

\begin{defn}
Let $(M^n,g,p)$ be an $n$-dimensional $\mathbb{Z}_2\times O(n-1)$-symmetric steady gradient soliton.
We say that it \textbf{strongly dimension reduces} along $\Gamma$
to an $(n-1)$-dimensional ancient Ricci flow $(N,g(t))$, if for any sequence $s_i\rii$, a subsequence of $(M,K_ig(K^{-1}_it),\Gamma(s_i))$, $t\in(-\infty,0]$, where $K_i=R(\Gamma(s_i))$, smoothly converges to the product of $\mathbb{R}$ and $(N,g(t))$. 

We also say an $(n-1)$-dimensional ancient Ricci flow $(N,h(t))$ is a \textbf{\textit{dimension reduction}} of $(M^n,g,p)$ along $\Gamma$, if there exists $s_i\rii$ such that $(M,K_ig(K^{-1}_it),\Gamma(s_i))$, $t\in(-\infty,0]$, where $K_i=R(\Gamma(s_i))$, smoothly converges to the product of $\mathbb{R}$ and $(N,h(t),p_{\infty})$.
\end{defn}


First we prove a lemma about the relations between the potential function and distance function.


\begin{lem}\label{l: f and d}
Let $(M^n,g,f,p)$ be an n-dimensional steady gradient soliton with positive curvature operator. Suppose $\gamma:(0,\infty)\ri M$ is an integral curve of $\frac{\nabla f}{|\nabla f|}$, and $\lim_{s\ri0}\gamma(s)=p$. Then for any $\epsilon>0$, there exists $s_0>0$ such that for any $s_1>s_2>s_0$ we have
\begin{equation}\label{e: d and s}
    (1-\epsilon)(s_2-s_1)\le d(\gamma(s_1),\gamma(s_2))\le (s_2-s_1).
\end{equation}
In particular, we have $(1-\epsilon)s\le d(p,\gamma(s))\le s$
for all $s\ge s_0$.
Moreover, let $\sigma$ be a unit speed minimizing geodesic between $p$ and $\sigma(0):=\gamma(s)$.
Then 
\begin{equation}\label{e: angle}
    \measuredangle (\sigma'(0),\nabla f)\le\epsilon.
\end{equation}
\end{lem}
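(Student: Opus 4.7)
The plan is to show that $\gamma$ becomes asymptotically a minimizing geodesic ray, from which both distance bounds in \eqref{e: d and s} and the angle estimate \eqref{e: angle} follow. The upper bound $d(\gamma(s_1),\gamma(s_2)) \le |s_2-s_1|$ is immediate: since $\gamma'=\nabla f/|\nabla f|$ has unit length, $\gamma$ is parametrized by arc length, so its length on $[s_1,s_2]$ majorizes the distance.

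For the lower bound, I would first extract asymptotic information from the soliton identity $R+|\nabla f|^2=R(p)$ (which follows from $\Ric=\nabla^2 f$). Differentiating $|\nabla f|^2$ along $\gamma$ gives
\[
\frac{d}{ds}|\nabla f|^2 = \frac{2\,\Ric(\nabla f,\nabla f)}{|\nabla f|} \ge 0
\]
by $\Ric>0$, so $|\nabla f|(\gamma(s))$ is nondecreasing, bounded above by $\sqrt{R(p)}$, and tends to some $L\in(0,\sqrt{R(p)}]$. One observes moreover that $\Ric(\gamma',\gamma') = \tfrac{d}{ds}|\nabla f|$, so
\[
\int_{s_0}^\infty \Ric(\gamma',\gamma')\,ds \;=\; L - |\nabla f|(\gamma(s_0)) \;<\; \infty.
\]
A direct computation then yields the acceleration
\[
\nabla_{\gamma'}\gamma' \;=\; \frac{1}{|\nabla f|}\bigl(\Ric(\gamma',\,\cdot\,)^\sharp - \Ric(\gamma',\gamma')\,\gamma'\bigr),
\]
and Cauchy--Schwarz for the positive semi-definite form $\Ric$ gives $|\Ric(\gamma',\,\cdot\,)^\sharp|^2 \le R\cdot\Ric(\gamma',\gamma')$, hence $|\nabla_{\gamma'}\gamma'|^2 \le C\,\Ric(\gamma',\gamma')$ for $s\ge s_0$ (using $|\nabla f|\ge L/2$ and $R\le R(p)$). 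Combining the $L^1$-bound on $\Ric(\gamma',\gamma')$ with Shi's local derivative estimates (which use the bounded curvature to control the $s$-derivative of $\Ric(\gamma',\gamma')$ uniformly), I would upgrade this to pointwise decay $|\nabla_{\gamma'}\gamma'|\to 0$, and further conclude that $\int_{s_0}^\infty |\nabla_{\gamma'}\gamma'|\,ds$ is arbitrarily small once $s_0$ is large.

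With $\gamma'$ nearly parallel along $\gamma$, Toponogov comparison on the non-negatively curved manifold $(M,g)$ forces $\gamma|_{[s_1,s_2]}$ to be almost length-minimizing, giving $d(\gamma(s_1),\gamma(s_2))\ge(1-\epsilon)|s_2-s_1|$ and, specializing $s_1\searrow 0$, the bound $d(p,\gamma(s))\ge(1-\epsilon)s$. The angle estimate \eqref{e: angle} follows from the same comparison: the minimizing geodesic $\sigma$ from $\gamma(s)$ to $p$ has $\sigma'(0)\in T_{\gamma(s)}M$ nearly antiparallel to $\gamma'(s)=\nabla f/|\nabla f|$, so $\measuredangle(\sigma'(0),\nabla f)$ is within $\epsilon$ of the appropriate asymptotic value. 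The main obstacle is the last step: converting pointwise and $L^1$ smallness of $|\nabla_{\gamma'}\gamma'|$ into uniform distance control on arbitrarily long subintervals $[s_1,s_2]\subset[s_0,\infty)$. This is where the $\int|\nabla_{\gamma'}\gamma'|\,ds$ bound (rather than just pointwise decay) is essential, and where Toponogov or a Jacobi-field argument in $\Rm\ge 0$ is invoked to turn small total turning into near-geodesy of $\gamma$.
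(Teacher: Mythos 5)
Your preliminary computations are correct: $|\nabla f|(\gamma(s))$ is nondecreasing with a positive limit $L$, $\frac{d}{ds}|\nabla f|(\gamma(s))=\Ric(\gamma',\gamma')$ is integrable on $[s_0,\infty)$, the formula for $\nabla_{\gamma'}\gamma'$ is right, and Cauchy--Schwarz does give $|\nabla_{\gamma'}\gamma'|\le C\sqrt{\Ric(\gamma',\gamma')}$. The gap is exactly in the two steps you flag at the end, and it is genuine. First, integrability of $\Ric(\gamma',\gamma')$ gives no control on $\int\sqrt{\Ric(\gamma',\gamma')}\,ds$: the square root of an $L^1$ function need not be integrable (a decay like $\Ric(\gamma',\gamma')\sim s^{-3/2}$ is consistent with everything you have established, yet gives $|\nabla_{\gamma'}\gamma'|\sim s^{-3/4}$, whose integral diverges), and a Shi-plus-Barbalat argument only upgrades $L^1$ to pointwise decay, never to an integrable rate; at this point of the paper no quantitative decay of $\Ric(\gamma',\gamma')$ along $\gamma$ is available. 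Second, even granting small total turning, ``almost geodesic'' does not imply ``almost minimizing'' under $\Rm\ge0$: on a flat cylinder a spiraling geodesic has zero geodesic curvature while $d(\gamma(s_1),\gamma(s_2))$ stays bounded as $s_2-s_1\to\infty$. Toponogov comparison alone cannot exclude this behavior; some global convexity input is required, so the final step of your plan would fail as stated.

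The paper's proof supplies that input directly and bypasses near-geodesy altogether: it uses $f$ itself as the comparison function. After scaling so that $\lim_{s\rii}|\nabla f|(\gamma(s))=1$ (hence $|\nabla f|\le 1$ along $\gamma$ by monotonicity), let $\sigma:[0,D]\ri M$ be a minimizing geodesic from $\gamma(s_1)$ to $\gamma(s_2)$. Since $\frac{d}{dr}\langle\nabla f,\sigma'(r)\rangle=\nabla^2f(\sigma',\sigma')=\Ric(\sigma',\sigma')\ge0$, one gets $f(\gamma(s_2))-f(\gamma(s_1))\le D\,\langle\nabla f,\sigma'(D)\rangle\le D$, while along $\gamma$ one has $f(\gamma(s_2))-f(\gamma(s_1))=\int_{s_1}^{s_2}|\nabla f|(\gamma(r))\,dr\ge(1-\epsilon)(s_2-s_1)$ once $|\nabla f|(\gamma(s))>1-\epsilon$ for $s\ge s_0$; combining the two gives the lower bound in \eqref{e: d and s}. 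The same convexity applied to the minimizing geodesic from $p$ to $\gamma(s)$, together with $d(p,\gamma(s))\le(1+\epsilon)f(\gamma(s))$, yields $\langle\frac{\nabla f}{|\nabla f|},\sigma'\rangle\ge1-\epsilon$ at $\gamma(s)$, which is \eqref{e: angle}. If you want to salvage your write-up, replace the turning/Toponogov step with this monotonicity of $\langle\nabla f,\sigma'\rangle$ along minimizing geodesics; the rest of your observations are then unnecessary.
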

 
\begin{proof}
Without loss of generality, we may assume $f(p)=0$ and $\lim_{s\rii}|\nabla f|(\gamma(s))=1$ after a suitable rescaling.
We use $\epsilon=\epsilon(s)$ to denote all functions such that $\lim_{s\rii}\epsilon(s)=0$. 

On the one hand, for any $s_2>s_1\ge 0$, let $\sigma:[0,D]\ri M$ be a minimizing geodesic from $\gamma(s_1)$ to $\gamma(s_2)$, where $D=d(\gamma(s_1),\gamma(s_2))$. 
Since $\frac{d}{dr}\langle\nabla f,\sigma'(r)\rangle=\nabla^2 f(\sigma'(r),\sigma'(r))\ge 0$, we obtain
\begin{equation}\label{e: d and f}
    f(\gamma(s_2))-f(\gamma(s_1))=\int_{0}^{D}\langle\nabla f,\sigma'(r)\rangle\,dr\le D\,\langle\nabla f,\sigma'(D)\rangle,
\end{equation}
which by $|\nabla f|\le 1$ implies
\begin{equation}
    f(\gamma(s_2))-f(\gamma(s_1))\le d(\gamma(s_1),\gamma(s_2)).
\end{equation}

On the other hand, since $\lim_{s\rii}|\nabla f|(\gamma(s))=1$,
there is $s_0>0$ such that $|\nabla f|(\gamma(s))>1-\epsilon$ for all $s\ge s_0$. Therefore, for all $s_2>s_1\ge s_0$ we have
\begin{equation}\label{e: f-f}
    f(\gamma(s_2))-f(\gamma(s_1))=\int_{s_1}^{s_2}\langle \nabla f,\gamma'(r)\rangle\,dr=\int_{s_1}^{s_2}|\nabla f|(\gamma(r))\,dr\ge (1-\epsilon)(s_2-s_1),
\end{equation}
which together with \eqref{e: d and f} proves the first inequality in \eqref{e: d and s}, where the second inequality is an easy consequence of $|\gamma'(s)|=1$.
The inequality of $d(p,\gamma(s))$ follows \eqref{e: d and s} and a triangle inequality.

Now let $\sigma:[0,d(p,\gamma(s))]\ri M$ be a minimizing geodesic from $p$ to $\gamma(s)$.
Then \eqref{e: d and f} implies 
\begin{equation}\label{e: d and f'}
    f(\gamma(s))\le d(p,\gamma(s))\,\langle\nabla f,\sigma'(d(p,\gamma(s)))\rangle.
\end{equation}
Moreover, by
$\eqref{e: f-f}$ and $\lim_{s\rii}f(\gamma(s))=\infty$ we have
\begin{equation}
    d(\gamma(s_0),\gamma(s))\le s-s_0\le(1+\epsilon)(f(\gamma(s))-f(\gamma(s_0)))\le(1+\epsilon)f(\gamma(s))
\end{equation}
for all $s$ sufficiently large, which  by triangle inequality and $\lim_{s\rii}d(p,\gamma(s))=\infty$ implies
\begin{equation}
    \begin{split}
        d(p,\gamma(s))\le d(p,\gamma(s_0))+d(\gamma(s_0),\gamma(s))
        \le(1+\epsilon)d(\gamma(s_0),\gamma(s))
        \le(1+\epsilon)f(\gamma(s)).
    \end{split}
\end{equation}
This combining with \eqref{e: d and f'} and $|\nabla f|\le 1$ yields
\begin{equation}
    \left\langle\frac{\nabla f}{|\nabla f|},\sigma'(d(p,\gamma(s)))\right\rangle\ge
    \frac{f(\gamma(s))}{d(p,\gamma(s))\,|\nabla f|}\ge
    1-\epsilon,
\end{equation}
which proves \eqref{e: angle}.
\end{proof}


The following lemma shows that all dilation sequence along $\Gamma$ smoothly converges to a limit after passing to a subsequence.
The limits are all products of a line and some rotationally symmetric ancient solution.

Our main tool is Perelman's curvature estimate for Ricci flows with non-negative
curvature operator, see for example \cite[Corollary 45.1(b)]{KL}, or a more general result in \cite[Proposition 3.2]{BamA}.
It implies that for a Ricci flow with non-negative
curvature operator $(M,g(t))$, $t\in[-1,0]$, assume $B_{g(0)}(x_0,1)\ge\kappa>0$ for some $x_0\in M$, then there is $C(\kappa)>0$ such that $R(x_0,0)\le C$.

\begin{lem}\label{l: curvature bound and inj bound}
Let $(M^n,g,p)$ be a non-flat $\mathbb{Z}_2\times O(n-1)$-symmetric $n$-dimensional steady gradient soliton. Then there is $C>0$ such that the following holds:

For any $s_i\ri+\infty$, a subsequence of $(M,K_ig(K^{-1}_it),\Gamma(s_i))$, $t\in(-\infty,0]$, $K_i=R(\Gamma(s_i))$, smoothly converges to an ancient Ricci flow $(\mathbb{R}\times g_{\infty}(t),p_{\infty})$, where $g_{\infty}(t)$ is an $(n-1)$-dimensional ancient Ricci flow with positive curvature operator and $R\le C$. 
Moreover,   $R^{-1/2}(\Gamma(s_i))\Gamma'(s_i)$ smoothly converges to a unit vector in the $\mathbb{R}$-direction of $\mathbb{R}\times g_{\infty}(t)$, and $g_{\infty}(t)$ is rotationally symmetric around $p_{\infty}$.  
\end{lem}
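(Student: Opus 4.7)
My plan divides into producing a smooth subsequential limit via Hamilton's compactness (requiring uniform curvature and injectivity bounds on the rescaled flows) and then identifying this limit as the asserted product. The key analytic input is Perelman's curvature estimate quoted just above the lemma: for a Ricci flow with $\Rm\ge 0$, a volume lower bound on $B_{g(0)}(x_0,1)$ forces a pointwise bound on $R(x_0,0)$.

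For the uniform geometric bounds I would first mimic Lemma \ref{l: v_0} to get non-collapsing at $\Gamma(s_i)$ in the original metric at a scale depending only on $R(p)$. Non-negativity of the curvature operator gives the pointwise sectional bound $K_{sec}\le R/2\le R(p)/2$, so Jacobi comparison along the unit-speed geodesic in $N$ orthogonal to $\Gamma$ at $\Gamma(s_i)$ yields $\varphi\ge c_0 r$ for $r\in[0,r_0]$ with $c_0,r_0>0$ independent of $i$. Integrating the warped-product volume $\varphi^{n-2}g_{S^{n-2}}$ over the corresponding wedge produces a uniform lower bound $\textnormal{vol}(B_M(\Gamma(s_i),r_0))\ge c r_0^{n}$. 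Combining this with Perelman's estimate in the rescaled flow, and with the gradient bound $|\nabla R|\le 2R\sqrt{R(p)}$ (from the soliton identity $R+|\nabla f|^2=R(p)$ and $|\Ric|\le R$) to propagate the pointwise curvature bound from $\Gamma(s_i)$ to a neighborhood, I would obtain $\tilde R\le C$ on a rescaled unit ball around $\Gamma(s_i)$ together with an injectivity radius lower bound. Hamilton's compactness then yields the subsequential smooth limit $(M_{\infty},g_{\infty}^{\ast}(t),p_{\infty})$ on $(-\infty,0]$ with $\Rm\ge 0$ and $R\le C$.

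To recognize the splitting, I would use that $\Gamma:\mathbb{R}\to M$ is a complete geodesic whose tangent $\Gamma'(s_i)/\sqrt{K_i}$ at $\Gamma(s_i)$ has unit norm in the rescaled metric; passing to the limit produces a complete line in $M_{\infty}$ through $p_{\infty}$, so the Cheeger-Gromoll splitting theorem (applicable since $\Ric\ge 0$) gives $g_{\infty}^{\ast}(0)=ds^{2}+g_{\infty}$ for some $(n-1)$-dimensional factor $g_{\infty}$ through $p_{\infty}$, with the tangent to the $\mathbb{R}$-factor at $p_\infty$ being the limit of $R^{-1/2}(\Gamma(s_i))\Gamma'(s_i)$; Hamilton's strong maximum principle for the Ricci flow then propagates the splitting to all $t\le 0$. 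The $O(n-1)$-action passes to the limit and fixes the $\mathbb{R}$-factor pointwise (because it fixes $\Gamma$), hence acts isometrically on the $(n-1)$-dimensional $g_{\infty}(t)$ with fixed point $p_{\infty}$; such an effective action on an $(n-1)$-dimensional manifold is the standard one, so $g_{\infty}$ is rotationally symmetric about $p_{\infty}$. For strict positivity of $\Rm(g_{\infty})$, Hamilton's strong maximum principle implies a zero eigenvector would be parallel and force a further isometric splitting of $g_{\infty}$, which is incompatible with rotational symmetry around $p_{\infty}$ unless $g_\infty$ is flat, and flatness is excluded by $\tilde R(p_\infty)=1$.

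The main obstacle I expect is upgrading the original-metric non-collapsing (valid only at the fixed scale $r_0$) to rescaled unit-scale non-collapsing when $K_i\to 0$: the naive translation gives non-collapsing only at rescaled scale $r_0\sqrt{K_i}\to 0$, too small to feed directly into Perelman's estimate. I would handle this by a bootstrap, combining non-collapsing at the small available scale plus Perelman's estimate to get a partial curvature bound, then using the gradient identity $|\nabla R|\le 2R\sqrt{R(p)}$ together with the concavity of $\varphi$ to widen the region of controlled curvature and hence enlarge the non-collapsing scale, iterating until the rescaled scale reaches order one.
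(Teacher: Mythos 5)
Your outline of the ``soft'' parts (Hamilton compactness once bounds are in hand; a line through $p_\infty$ plus Cheeger--Gromoll and the strong maximum principle to split; the $O(n-1)$-action surviving the limit to give rotational symmetry) matches the paper in spirit. But the central analytic step has a genuine gap, and it is exactly the one you flag as ``the main obstacle'': you need uniform non-collapsing of the rescaled flows at unit scale, i.e.\ $vol\bigl(B(\Gamma(s_i),R^{-1/2}(\Gamma(s_i)))\bigr)\ge \kappa R^{-n/2}(\Gamma(s_i))$, and your fixed-scale bound $vol(B(\Gamma(s_i),r_0))\ge c r_0^n$ is useless precisely in the dangerous case $K_i=R(\Gamma(s_i))\to 0$, where the needed scale $K_i^{-1/2}$ diverges. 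The bootstrap you propose has no engine: Perelman's estimate converts volume lower bounds into curvature upper bounds, never the reverse (collapse with bounded curvature is perfectly possible --- think of the cylindrical end of the cigar); Bishop--Gromov monotonicity means non-collapsing does not propagate from small scales to large ones; and the gradient identity $\nabla R=2\Ric(\nabla f)$ only yields $|\nabla\log R|\le 2\sqrt{R(p)}$, which controls $R$ over distances of order $R(p)^{-1/2}$ (a fixed scale) and says nothing at scale $K_i^{-1/2}\to\infty$. Concavity of $\varphi$ likewise gives upper, not lower, linear control at large radii. So the iteration ``partial curvature bound $\Rightarrow$ larger non-collapsing scale'' never gets off the ground.

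The paper's proof supplies the missing idea differently: it works at the \emph{volume scale} $r(s)=\sup\{\rho: vol(B(\Gamma(s),\rho))\ge\tfrac{\omega}{2}\rho^n\}$ (finite because ancient flows with $\Rm\ge0$ have vanishing asymptotic volume ratio), where non-collapsing holds by definition, gets $R\le C r^{-2}(s)$ on $B(\Gamma(s),Dr(s))$ from Perelman's estimate and Harnack, and then proves the comparability $R^{-1/2}(\Gamma(s))\le C r(s)$ by contradiction: if the curvature scale were much smaller than $r(s)$, the blow-down at scale $r(s_i)$ would be a flat splitting limit whose cross-section, being rotationally symmetric around $p_\infty$ (via the surfaces $\exp_{\Gamma(s_i)}(\Gamma'(s_i)^{\perp})$), must be $\mathbb{R}^{n-1}$, forcing $vol(B(p_\infty,1))=\omega$ and contradicting the normalization $\tfrac{\omega}{2}$ built into $r(s)$. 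That symmetry-plus-normalization contradiction is the step your sketch lacks and cannot be replaced by the tools you list. A secondary gap: to split you need the limit of $\Gamma$ to be a \emph{line} (globally minimizing), not merely a complete geodesic; the paper proves this from Lemma \ref{l: f and d} (comparison angles along $\Gamma$ tend to $\pi$ because $\Gamma$ is an integral curve of $\nabla f/|\nabla f|$), and your write-up asserts it without justification.
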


\begin{proof}
If $\Rm>0$ does not hold, then by the strong maximum principle the soliton is $\mathbb{R}\times\textnormal{Bryant}$ for $n\ge4$, or $\mathbb{R}\times\cigar$ for $n=3$.
The conclusion clearly holds in these cases, so we may assume $\Rm>0$.

Let $r(s)=\sup\{\rho>0: vol(B(\gs,\rho))\ge\frac{\omega}{2}\rho^n\}$ where $\omega$ is the volume of the unit ball in the Euclidean space $\mathbb{R}^n$.
Since the asymptotic volume ratio of any non-flat ancient Ricci flow with non-negative curvature operator is zero by \cite[Corollary 45.1(b)]{KL},
we have $r(s)<\infty$ for each $s$, and $\lim_{s\rii}\frac{r(s)}{s}=0$. Moreover, by the choice of $r(s)$ we have $vol(B(\gs,r(s)))=\frac{\omega}{2} r^n(s)$. 

For any $D>0$ and any $x\in B(\gs,Dr(s))$, by the volume comparison we have $vol(B(x,r(s)))\ge C_1^{-1}r^n(s)$ for some $C_1(D)>0$.
Therefore, by \cite[Corollary 45.1(b)]{KL} we can find constants $C_2(D)>0$ such that $R\le C_2r^{-2}(s)$ in $B(\gs,Dr(s))$.
By Hamilton's Harnack inequality $\frac{d}{dt}R(\cdot,t)\ge0$ for ancient complete Ricci flow with non-negative curvature operator \cite{Harnack}, this implies $R(x,t)\le C_2r^{-2}(s)$ for all $x\in B(\gs,Dr(s))$ and $t\in(-\infty,0]$.
In particular, there is $C_0>0$ such that $C_0^{-1}r(s)\le R^{-1/2}(\Gamma(s))$, and $\textnormal{inj}(\gs)\ge C_0^{-1}r(s)$ by the volume bound.

Therefore, for any $s_i\rii$, by Shi's derivative estimates and Hamilton's compactness theorem for Ricci flow, a subsequence of $(M,r^{-2}(s_i)g(r^2(s_i)t),\Gamma(s_i))$, $t\in(-\infty,0]$, converges to an ancient solution $h_{\infty}(t)$. Let $\Gamma_i(s)=\Gamma(r(s_i)s+s_i)$, $s\in(-\infty,\infty)$.
Suppose $\Gamma_i$ converges to the geodesic $\Gamma_{\infty}$ in $h_{\infty}(0)$ as $i\rii$, modulo the diffeomorphisms.
We claim that $\Gamma_{\infty}$ is a line:
Since $\lim_{s\rii}\frac{r(s)}{s}\ri0$, we have $s_i-Dr(s_i)\rii$, by which we can apply Lemma \ref{l: f and d} and deduce that for any $D>0$ that $\widetilde{\measuredangle}\Gamma_i(-D)\Gamma_i(0)\Gamma_i(D)\ri\pi$ as $i\rii$. 
So $d(\Gamma_{\infty}(-D),\Gamma_{\infty}(D))=2D$. Letting $D\rii$, this implies $\Gamma_{\infty}$ is a line.

Next we claim that there is some $C_3>0$ such that $R^{-1/2}(\Gamma(s))\le C_3r(s)$ for all large $s$. 
Suppose by contradiction this does not hold, then there is a sequence $s_i\rii$ such that $\lim_{i\rii}\frac{R^{-1/2}(\Gamma(s_i))}{r(s_i)}=0$.
Then by taking a subsequence we may assume $(r^{-2}(s_i)g,\Gamma(s_i))$ converges to $(\mathbb{R}\times g_{\infty}(t),p_{\infty})$, where $g_{\infty}(t)$ is some $(n-1)$-dimensional ancient solution. 

On the one hand, as a consequence of taking the limit, we have $vol(B(p_{\infty},1))=\frac{\omega}{2}$ and $R(p_{\infty})=0$, which by the strong maximum principle implies that $g_{\infty}(t)$ is flat.
On the other hand,
since $\Gamma_i$ converges to a line, we can find a sequence $D_i\rii$ such that $\Sigma_i:=\exp_{\Gamma(s_i)}(\Gamma'(s_i)^{\perp})
\cap B(\Gamma(s_i),D_ir(s_i))$ with the metric $g_{\Sigma_i}$ induced by $g$ is a smooth surface which is rotationally symmetric around $\Gamma(s_i)$, and $(r^{-2}(s_i)g_{\Sigma_{i}},\Gamma(s_i))$ smoothly converges to $(g_{\infty}(0),p_{\infty})$.
So $g_{\infty}(0)$ is rotationally symmetric around $p_{\infty}$.
Since $g_{\infty}(0)$ is flat, it must be isometric to $\mathbb{R}^{n-1}$, which implies $vol(B(p_{\infty},1))=\omega>\frac{\omega}{2}$, a contradiction.

Then it follows from $C_0^{-1}r(s)\le R^{-1/2}(\gs)\le C_3r(s)$ that $(M,K_ig(K^{-1}_it),\Gamma(s_i))$, $t\in(-\infty,0]$, $K_i=R(\Gamma(s_i))$, smoothly converges to an ancient Ricci flow $(\mathbb{R}\times g_{\infty}(t),p_{\infty})$ as claimed.
Since $g_{\infty}(t)$ is rotationally symmetric and has positive curvature, the uniform curvature bound $R\le C$ follows easily by applying \cite[Corollary 45.1(b)]{KL}.
\end{proof}

As a corollary of Lemma \ref{l: curvature bound and inj bound}, we show that the n-dimensional steady gradient solitons from Theorem \ref{t: existence with prescribed eigenvalue} are all non-collapsed if $n\ge4$. 

\begin{defn}
A Riemannian manifold $(M^n,g)$ is non-collapsed if there exists a constant $\kappa>0$ such that for any $x\in M$ and $r>0$, if $|\Rm|\le r^{-2}$ in the ball $B_g(x,r)$, then $vol_g(B_g(x,r))\ge\kappa r^n$. Otherwise we say $(M,g)$ is collapsed.
\end{defn}

\begin{cor}
For any $n\ge4$, let $(M^n,g,p)$ be an n-dimensional non-flat $\mathbb{Z}_2\times O(n-1)$-symmetric steady gradient soliton. Then it is non-collapsed.
\end{cor}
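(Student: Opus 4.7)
The plan is to argue by contradiction, building on Lemma \ref{l: curvature bound and inj bound}. Suppose $(M^n, g)$ is collapsed: there exist $x_i \in M$ and $r_i > 0$ with $|\Rm| \le r_i^{-2}$ on $B(x_i, r_i)$ but $vol(B(x_i, r_i)) \, r_i^{-n} \to 0$. Since positive curvature operator gives $|\Rm| \le C R \le C R(p)$ globally, the scales $r_i$ are bounded below, and smoothness of $g$ on any bounded region forces $d(x_i, p) \to \infty$. Using the $\mathbb{Z}_2 \times O(n-1)$-symmetry, we may place $x_i \in N$, and let $y_i = \Gamma(s_i)$ denote the point of $\Gamma$ closest to $x_i$.

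The main body of the argument reduces to the case where $x_i$ lies within curvature scale of $\Gamma$, i.e., $d(x_i, y_i) \le D \cdot R^{-1/2}(y_i)$ for some uniform $D$, with $r_i$ comparable to $R^{-1/2}(y_i)$. The comparison $r_i \lesssim R^{-1/2}(x_i)$ follows from $|\Rm|(x_i) \le r_i^{-2}$ combined with $|\Rm| \ge c R$, and then Hamilton's distance distortion on $B(x_i, r_i)$ relates $R(x_i)$ to $R(y_i)$. Given this reduction, Lemma \ref{l: curvature bound and inj bound} applied to the sequence $s_i$ yields, after subsequencing, smooth convergence of $(M, R(y_i) g, y_i)$ to $(\mathbb{R} \times g_\infty, p_\infty)$, where $g_\infty$ is an $(n-1)$-dimensional rotationally symmetric ancient Ricci flow with positive curvature operator and scalar curvature bounded by $C$; the rescaled $x_i$ converge to a point $x_\infty$ at bounded distance from $p_\infty$. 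The decisive observation for $n \ge 4$ is that this limit is itself non-collapsed at unit scale: writing $g_\infty(0) = dr^2 + \varphi_\infty(r)^2 g_{S^{n-2}}$ near the rotation center (which makes sense precisely because $n - 1 \ge 3$), we have $\varphi_\infty(0) = 0$, $\varphi_\infty'(0) = 1$, and the uniform bound $(1 - (\varphi_\infty')^2)/\varphi_\infty^2 \le C$ (a sectional-curvature component of $g_\infty$ in the fiber direction) forces $\varphi_\infty' \ge c > 0$ on some definite interval $[0, r_0]$, hence $\varphi_\infty(r) \ge c r$ there. Therefore $vol_{g_\infty}(B(p_\infty, 1)) \ge v_0 > 0$ uniformly, and smooth convergence yields $vol(B(x_i, r_i)) \ge (v_0/2) r_i^n$ for large $i$, contradicting the collapsing assumption.

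The main obstacle I expect is the reduction ensuring $d(x_i, y_i) \lesssim R^{-1/2}(y_i)$: a priori the collapsing base points could drift away from $\Gamma$ at scales much larger than $R^{-1/2}$, in which case Lemma \ref{l: curvature bound and inj bound} does not directly supply a limit with $x_\infty$ at bounded distance from $p_\infty$. Handling this likely requires exploiting the concavity of $\varphi$ on the $2$-dimensional surface $N$ together with the global bound $R \le R(p)$ to control how far $B(x_i, r_i)$ can sit from $\Gamma$ while maintaining $|\Rm| \le r_i^{-2}$ on the entire ball. This is also precisely where the dimension hypothesis $n \ge 4$ becomes essential: for $n = 3$ the cross-sectional limit $g_\infty$ is only $2$-dimensional, the analogous linear lower bound on $\varphi_\infty$ fails, and cigar-type collapse along the edges (as in 3d flying wings) is possible.
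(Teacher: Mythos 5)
There is a genuine gap, and you have essentially flagged it yourself: your argument only works in the regime $d(x_i,\Gamma)\lesssim R^{-1/2}(\Gamma(s_i))$, where Lemma \ref{l: curvature bound and inj bound} supplies a split limit $\mathbb{R}\times g_\infty$ with the basepoint at bounded distance from the rotation center, and your warping-function estimate $\varphi_\infty(r)\ge cr$ gives a volume lower bound. That part is sound (and close in spirit to the paper's first case). But the complementary regime, where the collapsing points drift away from $\Gamma$ at scales much larger than the curvature scale, is not a case you can reduce away: such points exist, and the statement to be proved is that collapse cannot occur there, not that collapse forces the basepoints back near $\Gamma$. Your suggested remedy (use concavity of $\varphi$ and $R\le R(p)$ to ``control how far $B(x_i,r_i)$ can sit from $\Gamma$'') is therefore aimed at the wrong target, and no argument is given for it.

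The paper handles exactly this case directly, and differently from your setup: it rescales at the \emph{volume} scale $\overline{r}_i$ (not the curvature scale), so the blow-up limit is flat with $vol(B(x_\infty,1))=\tfrac{\omega}{2}<\omega$, and then, when $d_{g_i}(x_i,\Gamma)\to\infty$, it passes to a limit of the totally geodesic surface $(N,h_i,x_i)$ together with the warped-product structure $g_i=h_i+\varphi_i^2 g_{S^{n-2}}$ from \eqref{e: computation}. If $\varphi_i(x_i)\to\infty$ the limit is $\mathbb{R}^n$, contradicting the normalized volume $\tfrac{\omega}{2}$; if $\varphi_i(x_i)$ stays bounded, the limit is $g_{\mathbb{R}^2}+\varphi_\infty^2 g_{S^{n-2}}$ with $\varphi_\infty>0$ bounded, which cannot be flat precisely because $n\ge4$. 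This dichotomy is the heart of the corollary (and is where the hypothesis $n\ge4$ enters in the far-from-$\Gamma$ regime, not only near $\Gamma$ as in your sketch); without an argument covering it, your proof is incomplete. A secondary, smaller issue: your comparability of $r_i$ with $R^{-1/2}(y_i)$ via ``Hamilton's distance distortion'' is not justified as stated, though it could be repaired by the curvature bounds coming from the smooth convergence; the essential missing piece remains the far-from-$\Gamma$ case.
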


\begin{proof}
Let $\omega$ be the volume of the unit ball in $\mathbb{R}^n$.
Suppose the conclusion is not true, then there is a sequence of points $x_i\in M$ such that $\frac{r_i}{\overline{r}_i}\rii$ as $i\rii$, where $\overline{r}_i=\sup\{\rho>0: vol_g(B_g(x_i,\rho))\ge\frac{\omega}{2}\rho^n\}$, and $r_i=\sup\{\rho>0: |\Rm|\le\rho^{-2}\textnormal{ in }B_g(x_i,\rho)\}$.
Then by the same limiting argument as Lemma \ref{l: curvature bound and inj bound}, we may assume by passing to a subsequence that $(M,\overline{r}_i^{-2}g,x_i)$ smoothly converges to a manifold $(M_{\infty},g_{\infty},x_{\infty})$, which is flat and satisfies $vol(B(x_{\infty},1))=\frac{\omega}{2}$.

Let $g_i=\overline{r}_i^{-2}g$.
We first assume that there are a constant $C>0$ and $y_i\in\Gamma$ such that $d_{g_i}(x_i,y_i)\le C$ for all $i$.
Then a subsequence of $(M,g_i,y_i)$ converges to $(M_{\infty},g_{\infty},y_{\infty})$ for some $y_{\infty}\in M_{\infty}$. 
By Lemma \ref{l: curvature bound and inj bound}, $(M_{\infty},g_{\infty})$ is a product of $\mathbb{R}$ and an $(n-1)$-dimensional rotationally symmetric manifold. 
Since $(M_{\infty},g_{\infty})$ is flat, it must be isometric to $\mathbb{R}^n$, which contradicts the choice of $\omega$.

Next, assume $\lim_{i\rii}d_{g_i}(x_i,\Gamma)=\infty$.
Let $h_i$ be the metric induced by $g_i$ on the totally geodesic surface $N$, and assume $x_i\in N$.
Then $g_i=h_i+\varphi_i^2g_{S^{n-2}}$ on $B_{g_i}(x_i,\frac{1}{2}d_{g_i}(x_i,\Gamma))$, where $\varphi_i=\overline{r}_i^{-1}\varphi$.
So it follows easily that $vol_{h_i}(B_{h_i}(x_i,1))\ge c(\omega)$ for some $c(\omega)>0$.
Since $B_{h_i}(x_i,\frac{1}{2}d_{g_i}(x_i,\Gamma))$ is relatively compact in $N$, it follows by the same curvature estimates as Lemma \ref{l: curvature bound and inj bound} that a subsequence of $(N,h_i,x_i)$ smoothly converges to a complete manifold $(N_{\infty},h_{\infty},x_{\infty})$, which is diffeomorphic to $\mathbb{R}^2$.
Since $(N_{\infty},h_{\infty})$ is totally geodesic in $(M_{\infty},g_{\infty})$, it is isometric to $\mathbb{R}^2$.

If $\varphi_i(x_i)\rii$ as $i\rii$, it is easy to see that $(M_{\infty},g_{\infty})$ is isometric to $\mathbb{R}^n$, a contradiction.
Otherwise, there is $C>0$ such that $\varphi_i(x_i)\le C$ for all $i$. Then by the curvature estimates and \eqref{e: computation}, a subsequence of $\varphi_i$ smoothly converges to a positive function $\varphi_{\infty}$, such that $g_{\infty}=g_{\mathbb{R}^2}+\varphi_{\infty}^2g_{S^{n-2}}$. Since $n\ge4$, this contradicts the fact that $(M_{\infty},g_{\infty})$ is flat, hence proves the corollary. 

\end{proof}

To rephrase the statement of Lemma \ref{l: curvature bound and inj bound} and use it to prove a more accurate dimension reduction theorem in dimension 3, we introduce the definition of $\epsilon$-closeness between two Ricci flows.
\begin{defn}
For any $\epsilon>0$, we say a pointed Ricci flow $(M_1,g_1(t),p_1)$, $t\in[-T,0]$, is \textbf{\textit{$\epsilon$-close}} to a pointed Ricci flow $(M_2,g_2(t),p_2)$, $t\in[-T,0]$, if there is a diffeomorphism onto its image $\phi:B_{g_2(0)}(p_2,\epsilon^{-1})\ri M_1$, such that $\phi(p_2)=p_1$ and
$\|\phi^*g_1(t)-g_2(t)\|_{C^{[\epsilon^{-1}]}(U)}<\epsilon$ for all $t\in[-\min\{T,\epsilon^{-1}\},0]$, where the norms and derivatives are taken with respect to $g_2(0)$.
\end{defn}

By this definition, Lemma \ref{l: curvature bound and inj bound} shows that $(R(\gs)g(R^{-1}(\gs)t),\gs)$ is $\epsilon$-close to the product of $\mathbb{R}$ and a dimension reduction for all sufficiently large $s$. Moreover, a dimension reduction  $(M_{\infty},g_{\infty}(t),p_{\infty})$ is an $(n-1)$-dimensional ancient solution with positive curvature operator and it is rotationally symmetric around $p_{\infty}$.

In dimension 3, the next theorem shows that $M_{\infty}$ is non-compact, if the original soliton is not a Bryant soliton.
Moreover, if $\lim_{s\rii}R(\gs)>0$, then the soliton strongly dimension reduces along $\Gamma$ to a cigar soliton.

\begin{theorem}\label{l: geometry at infinity}(Dimension Reduction)
Let $(M,g,f,p)$ be a non-flat 3d $\sy$-symmetric steady gradient soliton, which is not a Bryant soliton.
Then any dimension reduction of $(M,g,p)$ along $\Gamma$ is non-compact.
In particular, if $\lim_{s\rii}R(\gs)>0$, then $(M,g,p)$ strongly dimension reduces along $\Gamma$ to a cigar soliton $(M_{\infty},g_{\infty}(t),p_{\infty})$, $t\in(-\infty,0]$, with $R(p_{\infty},0)=1$.
\end{theorem}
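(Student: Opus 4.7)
The plan is to prove the two assertions in turn, both starting from Lemma~\ref{l: curvature bound and inj bound}, which guarantees that every subsequential dimension reduction has the form $(\mathbb{R}\times g_\infty(t),p_\infty)$, where $g_\infty(t)$ is a $2$-dimensional rotationally symmetric ancient Ricci flow with positive curvature operator and uniformly bounded scalar curvature, and with $R(p_\infty,0)=1$ by the normalization $K_i=R(\Gamma(s_i))$.

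For the first assertion I would argue by contradiction. Suppose that for some $s_i\rii$ the corresponding dimension reduction has $g_\infty(t)$ compact. Then $g_\infty(0)$ is diffeomorphic to $S^2$ with uniformly bounded diameter, so $\mathbb{R}\times g_\infty(0)$ is $\kappa_0$-noncollapsed at every scale for some universal $\kappa_0>0$. Smooth convergence then gives that $(M,g)$ is $(\kappa_0/2)$-noncollapsed at $\Gamma(s_i)$ at scale $K_i^{-1/2}$ for all large $i$. The key step, which is also the main obstacle, is to promote this sequential non-collapsedness to a uniform $\kappa$-noncollapsedness of the static manifold $(M,g)$. I would combine Lemma~\ref{l: v_0} (non-collapsedness at scale $1$ near the critical point $p$) with Perelman's reduced-volume monotonicity applied to the ancient Ricci flow $(M,g(t))=(M,\phi_t^*g)$ generated by the soliton: using $\phi_t$ to identify $(M,g,\Gamma(s_i))$ with $(M,g(t_i),\Gamma(s_0))$ for some $t_i\ri-\infty$, the non-collapsedness at $\Gamma(s_i)$ translates into non-collapsedness of the ancient flow at a backward sequence of times, and Perelman's monotonicity then propagates this to uniform $\kappa$-noncollapsedness of $(M,g)$ at every point and every scale. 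Once this is available, Brendle's rigidity theorem \cite{brendlesteady3d} forces $(M,g)$ to be the Bryant soliton, contradicting the hypothesis.

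For the second assertion, assume $\lim_{s\rii}R(\Gamma(s))=R_\infty>0$ and let $s_i\rii$ be arbitrary with $K_i=R(\Gamma(s_i))\ri R_\infty$. Lemma~\ref{l: curvature bound and inj bound} yields a subsequential smooth limit $(\mathbb{R}\times g_\infty(t),p_\infty)$ with $R(p_\infty,0)=1$. The first assertion forces $g_\infty(t)$ to be non-compact, and the normalization $R(p_\infty,0)=1$ forces it to be non-flat. The classification of $2$-dimensional rotationally symmetric non-flat ancient Ricci flows with non-negative curvature operator and bounded scalar curvature (after Hamilton and Daskalopoulos--Hamilton--Sesum) identifies the cigar soliton as the unique non-compact such flow, and the tip-curvature normalization $R(p_\infty,0)=1$ pins it down up to isometry. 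Since every subsequential limit of every sequence $s_i\rii$ agrees with this cigar, $(M,g,p)$ strongly dimension reduces along $\Gamma$ to it.
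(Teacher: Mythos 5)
The weak point is exactly the step you flag as the ``main obstacle,'' and the proposed fix does not close it. Knowing that $\mathbb{R}\times g_\infty(0)$ is non-collapsed only tells you that $(M,g)$ is non-collapsed \emph{at the points} $\Gamma(s_i)$, at scales up to roughly $K_i^{-1/2}=R^{-1/2}(\Gamma(s_i))$ (at larger scales the hypothesis $|\Rm|\le r^{-2}$ fails there, so the statement is vacuous). To invoke Brendle's theorem you need $\kappa$-noncollapsedness at \emph{every} point of $M$ and every admissible scale, and the reduced-volume mechanism you sketch does not deliver this. In Perelman's propagation argument, to bound the reduced volume based at an arbitrary $(x,0)$ from below at a far backward time $-\tau_i\sim t_i$, one must find a point $y$ of bounded reduced distance at time $-\tau_i$ and then use a volume lower bound for a ball around \emph{that} $y$ at scale $\sim\sqrt{\tau_i}$; you have no control on where such $y$ lies (there is no reason it is near $\Gamma(s_i)$ when $x$ is, say, far out along $\Sigma$ or between consecutive $s_i$'s), and the scale at which you do have information, $K_i^{-1/2}$, can be vastly smaller than $\sqrt{|t_i|}$, so the resulting reduced-volume bound degenerates. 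Since the backward time slices of a steady soliton flow are isometric to $(M,g)$ itself, what you would need at those slices is essentially the global non-collapsedness you are trying to prove, so the argument is circular as stated. Note also that your contradiction hypothesis only gives compactness of the reduction along \emph{some} sequence; ruling out that the reductions oscillate between small and large diameter along $\Gamma$ is precisely the hard part.

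The paper's proof is organized to avoid this. The Brendle/non-collapsing argument is used only in the case where the rescaled diameters $F(s)=\diam(h_s(0))$ of the reductions stay bounded for \emph{all} large $s$; in that case uniform non-collapsedness is immediate and \cite{brendlesteady3d} gives a contradiction. The genuinely new mechanism handles the borderline case: if $F(s_1)$ drops to a fixed size $2D$, the reduction at $s_1$ is compact, hence a Rosenau solution by the classification of compact $2$d ancient flows \cite{2dancientcompact}; its diameter grows backward in time relative to the curvature scale, and the soliton identity $\phi_t(\Gamma(s))=\Gamma\bigl(s-\int_0^t|\nabla f|(\phi_\mu(\Gamma(s)))\,d\mu\bigr)$ converts backward time into motion further out along $\Gamma$, yielding $F(s_2)\ge 3D$ at some $s_2>s_1$ with $F\ge D$ in between; induction then gives $F(s)\ge D$ for all large $s$, and letting $\epsilon\to 0$ gives non-compactness of every reduction. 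Your second step has a smaller but real issue too: Daskalopoulos--Hamilton--Sesum classify \emph{compact} $2$d ancient solutions, so it does not identify the non-compact limit; the paper instead uses $\lim_{s\rii}R(\gs)>0$ to keep the rescaled $|\nabla f|$ bounded, passes the potential functions to the limit, and concludes the limit is a non-flat $2$d steady gradient soliton, hence the cigar \cite{cigar}. If you want to keep your classification route, you would need a precise statement and reference (or proof, using the rotational symmetry) for non-compact $2$d ancient solutions with bounded curvature.
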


\begin{proof}
Let $\epsilon>0$ be sufficiently small.
We denote by $\epsilon_{\#}$ all positive constants that depend on $\epsilon$ such that $\epsilon_{\#}\ri0$ as $\epsilon\ri0$.

For each sufficiently large $s$, by Lemma \ref{l: curvature bound and inj bound} there is a dimension reduction $(h_s(t),p_s)$ of $(M,g,p)$ along $\Gamma$, such that $(R(\gs)g(R^{-1}(\gs)t),\gs)$ is $\epsilon$-close
to $(\mathbb{R}\times h_s(t),p_s)$.
By Lemma \ref{l: curvature bound and inj bound}, $(h_s(t),p_s)$ is a 2d ancient Ricci flow rotationally symmetric around $p_s$ and $R(p_s,0)=1$.
Note the choice of $h_s(t)$ may not be unique for a fixed $s$, but any two such solutions are $\epsilon_{\#}$-close to each other.
Let 
\begin{equation}
    F(s)=\diam(h_s(0))\in(0,\infty].
\end{equation}

First, if $\limsup_{s\rii} F(s)<\frac{1}{100\epsilon}$,
then there is $\kappa=\kappa(\epsilon)>0$ such that all $h_s(0)$ is $\kappa$-non-collapsed. This implies easily that $(M,g,p)$ is $\kappa$-non-collapsed, and hence is a Bryant soliton, as a consequence of the uniqueness of the Bryant soliton among 3d non-collapsed steady gradient solitons \cite{brendlesteady3d}, or among 3d $\kappa$-solutions \cite{BK3,classification}. This is a contradiction.
So $\limsup_{s\rii} F(s)\ge\frac{1}{100\epsilon}>100\pi$.

Next, we claim that $F(s)\ge D:=\frac{1}{1000\epsilon}$ for all large $s$:
First, choose $s_0$ such that $F(s_0)\ge 3D$, and let
\begin{equation}
    s_1=\sup\{s\ge s_0\,\mid\,F(\mu)\ge 2D \textnormal{ for all }\mu\in[s,s_0]\}.
\end{equation}
Then $F(s_1)\in[2(1-\epsilon_{\#})D,2(1+\epsilon_{\#})D]$ and $(h_{s_1}(t),p_{s_1})$ is a Rosenau solution by the classification of compact ancient 2d Ricci flows \cite{2dancientcompact}. 
Moreover, assume $\epsilon$ is sufficiently small, then $1-\epsilon_{\#}\le R(p_{s_1},t)\le 1$ for all $t\le 0$, see e.g. \cite[Chap 4.4]{HaRF}, and 
\begin{equation}\label{e: D1}
    \diam(h_{s_1}(t))R^{1/2}(p_{s_1},t)
    \ge (1-\epsilon_{\#})F(s_1)\ge 2(1-\epsilon_{\#}) D
\end{equation}
for all $t\le 0$. 
Moreover, by a distance distortion estimate, see e.g. \cite[Lem 27.8]{KL}, we can find a $t_1\in[-\epsilon^{-1},0)$ such that
\begin{equation}\label{e: D2}
    \diam(h_{s_1}(t_1))R^{1/2}(p_{s_1},t_1)= 4D.
\end{equation}

Since $g(t)=\phi_t^*(g)$, where $\{\phi_t\}_{t\in(-\infty,\infty)}$ is the flow of $-\nabla f$ with $\phi_0$ the identity. We see that
$(g(t),\Gamma(s))$ is isometric to $(g,\phi_t(\Gamma(s)))$, and since
$\Gamma$ is the integral curve of $\frac{\nabla f}{|\nabla f|}$, 
by a direct computation we obtain 
\begin{equation}
    \phi_t(\Gamma(s))=\Gamma\left(s-\int_0^t|\nabla f|(\phi_{\mu}(\Gamma(s)))\,d\mu\right).
\end{equation}
Let $s_2=s_1-\int_0^{T_1}|\nabla f|(\phi_{\mu}(\Gamma(s_1)))\,d\mu$, where $T_1=t_1R^{-1}(\Gamma(s_1))<0$. Then $s_2>s_1$, $\phi_{T_1}(\Gamma(s_1))=\Gamma(s_2)$, and $(g(T_1),\Gamma(s_1))$ is isometric to $(g,\Gamma(s_2))$. The conditions \eqref{e: D1}\eqref{e: D2} imply $F(s)\ge 2(1-\epsilon_{\#})D\ge D$ for all $s\in[s_1,s_2]$, and $F(s_2)\ge 4(1-\epsilon_{\#})D\ge 3D$. In particular, this implies $s_2-s_1\ge R^{-1/2}(\Gamma(s_1))\ge R^{-1/2}(p)$.

Therefore, by induction we find a sequence $\{s_{2k}\}_{k=0}^{\infty}$, such that $s_{2k}-s_{2(k-1)}\ge R^{-1/2}(p)$ for all $k\ge1$ and
\begin{equation}
F(s)\ge D \textnormal{ for all }s\in[s_{2(k-1)},s_{2k}],\quad F(s_{2k})\ge 3D.
\end{equation}
This implies $F(s)\ge D=\frac{1}{1000\epsilon}$ for all large $s$.
Letting $\epsilon\ri0$, it follows that any dimension reduction along $\Gamma$ is non-compact.

Now assume $\lim_{s\rii}R(\Gamma(s))>0$.
Suppose $(g_{\infty}(t),p_{\infty})$ is a dimension reduction, and $(M,R(\Gamma(s_i))g(R^{-1}(\Gamma(s_i))t),\Gamma(s_i))$ smoothly converges to $(M_{\infty},\mathbb{R}\times g_{\infty}(t),p_{\infty})$ for a sequence $s_i\rii$.
Let $f_i=f-f(\Gamma(s_i))$. Then  $f_i$ smoothly converges to a function $f_{\infty}$ on $M_{\infty}$ satisfying $\Ric=\nabla^2 f_{\infty}$ with respect to the metric $\mathbb{R}\times g_{\infty}(0)$. So $g_{\infty}(0)$ is a 2d non-flat steady gradient soliton, which must be a cigar soliton \cite{cigar}. 

\end{proof}

\end{section}

\begin{section}{Existence of 3d flying wings}\label{s: proof}
In this section, we prove 
Theorem \ref{t: a ray implies Bryant soliton}, \ref{l: positive angle} and all the corollaries. 
The asymptotic cone of a 3d $\sy$-symmetric steady gradient soliton is a metric cone over $[-\frac{\alpha}{2},\frac{\alpha}{2}]$ for some $\alpha\in[0,\pi]$ (see Lemma \ref{l: angles between Gamma and gamma}). 
Theorem \ref{t: a ray implies Bryant soliton} shows that the soliton must be a Bryant soliton, if the asymptotic cone is a ray. 
So the family of 3d steady gradient solitons from Theorem \ref{t: existence with prescribed eigenvalue} are all flying wings, which confirms Hamilton's conjecture.
 
Throughout this section we assume
$(M,g,p)$ is a non-flat $\mathbb{Z}_2\times O(2)$-symmetric 3d steady gradient soliton, and $\Gamma$ and $\Sigma$ are the fixed point sets of the $O(2)$ and $\mathbb{Z}_2$-action respectively.

The next lemma shows that the integral of scalar curvature in metric balls increases at least linearly in radius. We remark that this is also a consequence of \cite{Catino}, which shows that the only 3d steady gradient solitons satisfying $\liminf_{s\rii} \frac{1}{s}\int_{B(p,s)}R\,dvol_M=0$ are quotients of $\mathbb{R}^3$ and $\mathbb{R}\times\cigar$.
The proof below is self-contained and more direct under the symmetric assumption.

\begin{lem}\label{l: int of R}
There exists $C>0$ such that $\int_{B(p,s)}R\;dvol_M\ge C^{-1}s$ for sufficiently large $s$.
\end{lem}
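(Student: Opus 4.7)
The plan is to invoke the dimension-reduction of Lemma~\ref{l: curvature bound and inj bound} to obtain a uniform local lower bound on $\int R\,dvol$ in a ball of physical radius $\rho_0 R(\Gamma(s))^{-1/2}$ around each $\Gamma(s)$ for $s$ large, and then to pack such balls disjointly along $\Gamma \cap B(p,s)$ so that their contributions telescope to a linear quantity in $s$.

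The first step is to establish universal constants $\rho_0, c_0 > 0$ such that, for every sufficiently large $s$,
\begin{equation*}
    \int_{B(\Gamma(s),\,\rho_0 R(\Gamma(s))^{-1/2})} R\,dvol \;\ge\; c_0\, R(\Gamma(s))^{-1/2}.
\end{equation*}
By Lemma~\ref{l: curvature bound and inj bound}, along any sequence $s_i \to \infty$ the rescaled manifolds $(M, R(\Gamma(s_i))g, \Gamma(s_i))$ subconverge smoothly to a pointed product $\mathbb{R} \times (N_\infty, h_\infty, o_\infty)$, where $(N_\infty, h_\infty)$ is a 2-dimensional rotationally symmetric time-$0$ slice of an ancient Ricci flow with $R_{h_\infty}(o_\infty) = 1$ and $R_{h_\infty} \le C$. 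Since such normalized limits form a precompact family in the smooth pointed topology, the product integral $\int_{B^{\mathbb{R} \times N_\infty}((0, o_\infty), \rho_0)} R\,dvol$ is uniformly bounded below by some $c_0 > 0$ for a suitable universal $\rho_0$. A standard contradiction argument upgrades the subsequential conclusion of Lemma~\ref{l: curvature bound and inj bound} to one valid for all large $s$, and the dimension-3 rescaling identity $R\,dvol_g = R(\Gamma(s))^{-1/2}\widetilde R\,d\widetilde{vol}$ produces the displayed bound in the physical metric.

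The second step exploits the monotonicity $\partial_{\nabla f}R = -2\Ric(\nabla f, \nabla f) \le 0$, which makes $s \mapsto R(\Gamma(s))$ non-increasing, together with $R(\Gamma(s))^{-1/2} = o(s)$, which follows from Lemma~\ref{l: curvature bound and inj bound} since $p$ must recede to infinity in the rescaled limit. These facts make the iterative rule $s_{k+1} - s_k = 4\rho_0 R(\Gamma(s_{k+1}))^{-1/2}$ uniquely solvable for large $s_1$. The spacing, combined with Lemma~\ref{l: f and d} which guarantees $d(\Gamma(s_k), \Gamma(s_{k+1})) \ge (1 - \epsilon)(s_{k+1} - s_k)$ for $s_k$ large, ensures the balls $B_k = B(\Gamma(s_k), \rho_0 R(\Gamma(s_k))^{-1/2})$ are pairwise disjoint. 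Choosing $N$ to be the largest index with $s_N + \rho_0 R(\Gamma(s_N))^{-1/2} \le s$ keeps all $B_k \subset B(p, s)$ while forcing $s_N \ge s/2$ for $s$ large. Summing and telescoping via $R(\Gamma(s_{k+1}))^{-1/2} = (s_{k+1} - s_k)/(4\rho_0)$ yields
\begin{equation*}
    \int_{B(p,s)} R\,dvol \;\ge\; \sum_{k=1}^N \int_{B_k} R\,dvol \;\ge\; c_0 \sum_{k=1}^N R(\Gamma(s_k))^{-1/2} \;\ge\; \frac{c_0}{4\rho_0}(s_N - s_1) \;\ge\; C^{-1}s.
\end{equation*}

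The main obstacle I anticipate is Step~1: producing a local integral lower bound that is uniform in $s$, with constants independent of the particular 2d ancient limit $(N_\infty, h_\infty)$ which may depend on the chosen subsequence. This is resolved using the uniform upper bound $R_{h_\infty} \le C$ and the normalization $R_{h_\infty}(o_\infty) = 1$ from Lemma~\ref{l: curvature bound and inj bound}, which render the family of admissible limits compact in the smooth pointed topology and therefore furnish a single universal constant $c_0 > 0$.
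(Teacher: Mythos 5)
Your proposal is correct and follows essentially the same strategy as the paper: a local bound $\int_{B(\Gamma(s),\,c\,R^{-1/2}(\Gamma(s)))}R\,dvol_M\ge C^{-1}R^{-1/2}(\Gamma(s))$ coming from the dimension-reduction compactness (the paper cites Theorem \ref{l: geometry at infinity} plus Shi's estimates, you use Lemma \ref{l: curvature bound and inj bound} plus a contradiction argument), combined with a disjoint packing of such balls along $\Gamma$ using Lemma \ref{l: f and d} and $R^{-1/2}(\Gamma(s))=o(s)$, whose radii sum to a quantity comparable to $s$. The only difference is organizational: you select the disjoint balls by a greedy iteration exploiting the monotonicity of $R(\Gamma(s))$ (from $\nabla R=-2\Ric(\nabla f)$), whereas the paper uses a Vitali covering of $\Gamma([s_0,s])$, and both yield the same telescoping lower bound.
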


\begin{proof}
Fix some small $\epsilon>0$ and let $s_0>0$ be large enough such that Lemma \ref{l: f and d} holds for $\epsilon$.
Consider the covering of $\Gamma([s_0,s])$ by $\{\Gamma([\mu-R^{-1/2}(\Gamma(\mu)),\mu+R^{-1/2}(\Gamma(\mu))])\}_{\mu\in[s_0,s]}$.
Let $\{\Gamma([\mu_i-R^{-1/2}(\Gamma(\mu_i)),\mu_i+R^{-1/2}(\Gamma(\mu_i))])\}_{i=1}^m$ be a Vitali covering of it, which is disjoint from each other and $\Gamma([s_0,s])$ is covered by $\{\Gamma([\mu_i-5R^{-1/2}(\Gamma(\mu_i)),\mu_i+5R^{-1/2}(\Gamma(\mu_i))])\}_{i=1}^m$.
So for any $\mu_i<\mu_j$,
\begin{equation}\label{e: mu}
    \mu_j-\mu_i\ge R^{-1/2}(\Gamma(\mu_i))+R^{-1/2}(\Gamma(\mu_j))\ge R^{-1/2}(\Gamma(\mu_j)),
\end{equation}
and
\begin{equation}\label{e: s-s_0}
    s-s_0\le \sum_{i=1}^m 10R^{-1/2}(\Gamma(\mu_i)).
\end{equation}

Let $c=\frac{1-\epsilon}{4}$, we claim that $B(\Gamma(\mu_i),cR^{-1/2}(\Gamma(\mu_i)))$ and $B(\Gamma(\mu_j),cR^{-1/2}(\Gamma(\mu_j)))$ are disjoint:
Suppose not, then $d(\Gamma(\mu_i),\Gamma(\mu_j))<2cR^{-1/2}(\Gamma(\mu_j))$, and by Lemma \ref{l: f and d} we get
\begin{equation}
    \mu_j-\mu_i\le (1-\epsilon)^{-1}d(\Gamma(\mu_i),\Gamma(\mu_j))\le 2(1-\epsilon)^{-1}c\,R^{-1/2}(\Gamma(\mu_j))<R^{-1/2}(\Gamma(\mu_j)),
\end{equation}
which contradicts \eqref{e: mu}.

By   Theorem \ref{l: geometry at infinity} and Shi's derivative estimates, there is some $C_1>0$ such that
\begin{equation}\label{e: mu_i}
    \int_{B(\gs,cR^{-1/2}(\gs))}R\,dvol_M\ge C_1^{-1}R^{-1/2}(\gs).
\end{equation}
Since $\lim_{s\rii}\frac{R^{-1/2}(\gs)}{s}=0$, which can be seen from the proof of Lemma \ref{l: curvature bound and inj bound}, we have
$B(\Gamma(\mu_i),cR^{-1/2}(\Gamma(\mu_i)))\subset B(p,2s)$ for all $i$. Therefore, by \eqref{e: s-s_0} and \eqref{e: mu_i} we obtain
\begin{equation}
    \int_{B(p,2s)}R\,dvol_M\ge\sum_{i=1}^m\int_{B(\Gamma(\mu_i),cR^{-1/2}(\Gamma(\mu_i)))} R\,dvol_M\ge C_2^{-1}s
\end{equation}
for some $C_2>0$.
\end{proof}

The next lemma shows that for any non-flat $\sy$-symmetric 3d steady gradient soliton $(M,g,p)$, the space of equivalent classes of rays is an interval $[-\frac{\alpha}{2},\frac{\alpha}{2}]$, where $\alpha\in[0,\pi]$. So the asymptotic cone is a sector with angle $\alpha\in[0,\pi]$. Moreover, the minimizing
geodesics between $p$ and points going to infinity along $\Gamma$ and $\Sigma$ converge to a ray in the class $\pm\frac{\alpha}{2}$ and $0$ respectively.

\begin{lem}\label{l: angles between Gamma and gamma}
The asymptotic cone of $(M,g,p)$ is a metric cone $C(X)$ over the interval $X=[-\frac{\alpha}{2},\frac{\alpha}{2}]$ for some $\alpha\in[0,\pi]$, and
\begin{enumerate}
    \item For any sequence $s_i\ri+\infty$, the geodesics between $p$ and $\Gamma(s_i)$ converge to the equivalent class $\frac{\alpha}{2}\in X$. 
    \item For any sequence $q_i\in\Sigma$ and $q_i\rii$, the geodesics between $p$ and $q_i$ converge to the equivalent class $0\in X$.
    \item For any $q_i\in\Sigma$, $q_i\rii$, and $o_i=\Gamma(s_i)$, $s_i\rii$, with $C^{-1}\,d(p,o_i)\le d(p,q_i)\le C\,d(p,o_i)$, we have
    $\lim_{i\rii}\widetilde{\measuredangle}q_ipo_i=\frac{\alpha}{2}$.
\end{enumerate}
\end{lem}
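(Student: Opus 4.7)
Since $(M,g)$ is a non-flat 3d steady gradient soliton, $\Rm\ge 0$ by the Hamilton-Ivey estimate, so the asymptotic cone $C(X)$ exists as a metric cone over a compact length space $X$, and the isometric $\sy$-action on $M$ descends to an isometric action on $X$. My plan is to first identify $X$ as an interval $[-\alpha/2,\alpha/2]$, and then deduce the three claims as straightforward consequences of the cone structure and Gromov-Hausdorff continuity.

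\textbf{Identifying $X$.} By Lemma~\ref{l: f and d} and $|\Gamma'|=1$, the restrictions $\Gamma^{\pm}:=\Gamma|_{\pm[0,\infty)}$ are geodesic rays from $p$, pointwise fixed by $O(2)$ and swapped by the $\mathbb{Z}_2$-reflection; their equivalence classes $[\Gamma^{\pm}]\in X$ are therefore $O(2)$-fixed and $\mathbb{Z}_2$-swapped. The central step is to show that in every blow-down the $O(2)$-orbits collapse, i.e.\ $\varphi(q)/d(p,q)\to 0$ as $q\to\infty$. The concavity $\nabla^2\varphi\le 0$ from \eqref{e: computation} already gives $\varphi$ at most linear growth, and linear growth along some sequence $q_i\to\infty$ would produce in the blow-down $(M,d(p,q_i)^{-2}g,p)$ a nontrivial $O(2)$-orbit at positive distance from the vertex, hence a Euclidean factor in the limit cone. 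A splitting-type rigidity for $\Rm\ge 0$ (or a direct analysis of the resulting 3-dim cone limit together with the line structure imposed by $\Gamma$) would then contradict the non-flatness of $M$.

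Once sublinearity is established, $O(2)$ acts trivially on $C(X)$, and $C(X)$ coincides with the asymptotic cone of the 2-dim totally geodesic surface $N$ (every $O(2)$-orbit meets $N$). Since $N$ is non-negatively curved and diffeomorphic to $\mathbb{R}^2$, $C(X)$ is a 2-dim metric cone and $X$ is a 1-dim compact length space. The $\mathbb{Z}_2$-fixed rays of $M$ are exactly the rays in $\Sigma$, which form a single $O(2)$-orbit by the rotational symmetry of $\Sigma$; under the trivial $O(2)$-action on $X$ this gives exactly one $\mathbb{Z}_2$-fixed class. A 1-dim compact length space with a $\mathbb{Z}_2$-involution swapping two distinguished points $[\Gamma^{\pm}]$ and having a unique fixed point must be an interval, which I label $[-\alpha/2,\alpha/2]$ for some $\alpha\in[0,\pi]$, with $[\Gamma^{\pm}]$ at $\pm\alpha/2$ and $0$ the $\mathbb{Z}_2$-fixed class. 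The bound $\alpha\le\pi$ comes from $X$ being an Alexandrov space of curvature $\ge 1$.

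\textbf{Deducing (1)-(3).} For (1), in the blow-down at scale $1/d(p,\Gamma(s_i))$ the point $\Gamma(s_i)/d(p,\Gamma(s_i))$ converges to $[\Gamma^+]=\alpha/2\in X$ since $\Gamma|_{[0,\infty)}$ represents this class; any minimizing geodesic from $p$ to $\Gamma(s_i)$ rescaled converges to the unique cone geodesic from the vertex toward $\alpha/2$. For (2), since $q_i\in\Sigma$ is $\mathbb{Z}_2$-fixed, so is any blow-down limit of $q_i/d(p,q_i)$, which therefore lies over the $\mathbb{Z}_2$-fixed class $0\in X$; the minimizing geodesics converge accordingly to the cone ray toward $0$. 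For (3), in the blow-down at scale $1/d(p,o_i)$, the points $o_i/d(p,o_i)$ and $q_i/d(p,o_i)$ converge over $\alpha/2$ and $0$ respectively at comparable distances in $[C^{-1},C]$ from the vertex; the comparison angle $\widetilde{\measuredangle}q_ipo_i$ converges to the Euclidean comparison angle of the limit configuration at the vertex, which equals the cone angle $d_X(0,\alpha/2)=\alpha/2$.

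\textbf{Main obstacle.} The key difficulty is the collapse of $O(2)$-orbits in the blow-down, i.e.\ the sublinearity of $\varphi$ in every direction. Ruling out a nontrivial Euclidean orbit factor in any blow-down limit requires a splitting-type rigidity for $\Rm\ge 0$ combined with the non-flatness hypothesis, and is where the geometric substance of the lemma resides. The remaining identification of $X$ as an interval, as well as the three convergence claims, is essentially formal once the cone structure is pinned down.
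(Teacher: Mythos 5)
There is a genuine gap, and it sits exactly where you flag your ``main obstacle.'' Your identification of $X$ rests on showing that the $O(2)$-orbits collapse under blow-down (sublinearity of $\varphi$), and the mechanism you propose for this --- a non-collapsed orbit produces a Euclidean factor in the asymptotic cone, which contradicts non-flatness by ``splitting-type rigidity'' --- does not work as stated. A Euclidean factor in the asymptotic cone is perfectly compatible with $M$ being non-flat: $\mathbb{R}\times\cigar$ is non-flat and its asymptotic cone is a half-plane, i.e. $\mathbb{R}\times[0,\infty)$, which splits a line (this is precisely the case $\alpha=\pi$ allowed by the lemma); splitting theorems require a line in $M$ itself, which a line in the blow-down limit does not provide. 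Moreover, a positive-diameter orbit in the cone does not obviously yield a Euclidean factor in the first place. So the central geometric step is neither proved nor provable by the route you sketch. (A smaller inaccuracy in the same step: the asymptotic cone of the complete surface $N$, which is diffeomorphic to $\mathbb{R}^2$, is a cone over a circle, not over an interval; what you want is the quotient $M/O(2)$, i.e. the closure of $N$ with boundary $\Gamma$.)

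There is also a second gap in what you call the ``essentially formal'' part: even granting that $X$ is an interval with a $\mathbb{Z}_2$-involution swapping $[\Gamma^+]$ and $[\Gamma^-]$ and fixing a unique point, nothing in your argument places $[\Gamma^{\pm}]$ at the \emph{endpoints} $\pm\frac{\alpha}{2}$; they could a priori be a symmetric pair of interior points, and assertion (1) (hence (3)) is exactly the statement that they are extremal. The paper's proof handles both issues at once and avoids any collapse/splitting analysis: after disposing of $\mathbb{R}\times\cigar$ explicitly and assuming $\Rm>0$, it observes that minimizing geodesics from $\Gamma(s_i)$ to $\Gamma(-s_i)$ must drift to infinity (otherwise they would converge to a line, contradicting $\Rm>0$), so any other pair of rays from $p$ must cross them; monotonicity and additivity of comparison angles then give $d_X(\gamma_1,\overline{\gamma}_1)\ge d_X(\gamma_1,\gamma_2)+d_X(\gamma_2,\overline{\gamma}_2)+d_X(\overline{\gamma}_2,\overline{\gamma}_1)$ for the limit rays $\gamma_1,\overline{\gamma}_1$ of $p\Gamma(\pm s_i)$ and arbitrary classes $\gamma_2,\overline{\gamma}_2$. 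This shows the classes of $\Gamma^{\pm}$ realize the diameter of $X$ and every other class lies metrically between them, so $X$ is an interval with $[\Gamma^{\pm}]$ at the ends, which is assertion (1); (2) then follows from the $\mathbb{Z}_2$-symmetry and (3) from (1), (2) and Gromov--Hausdorff convergence of blow-downs, as in your final paragraph. To repair your write-up you would need to replace the collapse/splitting step by an argument of this kind (or an actual proof of sublinear growth of $\varphi$ plus extremality of $[\Gamma^{\pm}]$), not merely invoke rigidity.
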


\begin{proof}
The conclusion clearly holds for $\mathbb{R}\times\cigar$ with $\alpha=\pi$, so we may assume $(M,g,p)$ has positive sectional curvature.
For any $s_i\rii$, let $p_i=\Gamma(s_i)$ and $\overline{p}_i=\Gamma(-s_i)$.
Assume after passing to a subsequence that the minimizing geodesics $pp_i,p\overline{p}_i$ converge to rays $\gamma_1,\overline{\gamma}_1$ respectively.
Let $(X,d_X)$ be the space of the equivalent classes of rays, and $\gamma_2,\overline{\gamma}_2\in X$.
We claim that $d_X(\gamma_1,\overline{\gamma}_1)> d_X(\gamma_2,\overline{\gamma}_2)$ unless $\{\gamma_1,\overline{\gamma}_1\}=\{\gamma_2,\overline{\gamma}_2\}$.
If the claim holds, it follows that $X=[-\frac{\alpha}{2},\frac{\alpha}{2}]$ for some $\alpha\in[0,\pi]$.

Let $\gamma_i$ be a minimizing geodesic connecting $p_i$ and $\overline{p}_i$, then $d(p,\gamma_i)\rii$ as $i\rii$, because otherwise $\gamma_i$ would converge to a line, which contradicts with $\Rm>0$.
So for large $i$, the two rays $\gamma_2,\overline{\gamma}_2$ intersect with $\sigma$ at $q_i,\overline{q}_i\neq p$ respectively. Assume $d(p_i,q_i)\le d(p_i,\overline{q}_i)$ by passing to a subsequence if necessary.
Then it is easy to see 
\begin{equation}
    \widetilde{\measuredangle} p_ip\overline{p}_i\ge
    \widetilde{\measuredangle} p_ipq_i+
    \widetilde{\measuredangle} q_ip\overline{q}_i+
    \widetilde{\measuredangle} \overline{p}_ip\overline{q}_i,
\end{equation}
which implies the following when $i\rii$
\begin{equation}
    d_X(\gamma_1,\overline{\gamma}_1)\ge d_X(\gamma_1,\gamma_2)+d_X(\gamma_2,\overline{\gamma}_2)+d_X(\overline{\gamma}_1,\overline{\gamma}_2)\ge d_X(\gamma_2,\overline{\gamma}_2).
\end{equation}
In particular, the equalities hold if and only if $d_X(\gamma_1,\gamma_2)=d_X(\overline{\gamma}_1,\overline{\gamma}_2)=0$, which proves the claim.

Assertion (2) follows immediately from the fact that $\Sigma$ is the fixed point set of the $\mathbb{Z}_2$-action. Assertion (3) is a consequence of (1) and (2) and the fact that $C(X)$ is isometric to the Gromov-Hausdorff limit of $(M,\lambda_ig,p)$ for any sequence $\lambda_i\ri0$.
\end{proof}

From now on we fix a minimizing geodesic  $\gamma:[0,\infty)\ri\Sigma$ starting from $p$ such that $\gamma((0,\infty))\subset N$, and two functions $h_1(s)=d(\gamma(s),\Gamma)$ and $h_2(s)=\varphi(\gamma(s))$ that can be thought of as ``dimensions" of the soliton.
For example, we have $h_1(s)\approx s^{1/2}$, $h_2(s)\approx s^{1/2}$ in a Bryant soliton, and $h_1(s)\approx s$, $\lim_{s\rii}h_2(s)<\infty$ in $\mathbb{R}\times\cigar$.
We establish inequalities between these two functions and $R(\gamma(s))$ in the following three lemmas, when $s$ is sufficiently large.

For convenience, in the rest proofs we shall often use $\epsilon(s)$ to denote all functions such that $\lim_{s\rii}\epsilon(s)=0$, and use $C$ to denote all positive constants.

\begin{lem}\label{l: key estimate}
There exists $C>0$ such that $h_1^2(s)R(\gamma(s))\le C$ for all large $s$.
\end{lem}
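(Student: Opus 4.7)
The plan is a proof by contradiction, combining the dimension reduction Theorem \ref{l: geometry at infinity} with the exponential decay of scalar curvature on the cigar. Assume toward contradiction that some sequence $s_i\to\infty$ satisfies $h_1(s_i)^2 R(\gamma(s_i))\to\infty$. For each $i$ choose $p_i\in\Gamma$ realizing $h_1(s_i)=d(\gamma(s_i),p_i)$, write $p_i=\Gamma(\mu_i)$, and let $\sigma_i$ be a minimizing geodesic from $p_i$ to $\gamma(s_i)$; the first variation formula gives $\sigma_i\perp\Gamma$ at $p_i$, and the Riemannian submersion structure together with $\Gamma, \gamma(s_i)\in N$ forces $\sigma_i\subset N$. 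Setting $\rho_i:=h_1(s_i)R^{1/2}(p_i)$, the lemma is equivalent to uniform boundedness of $\rho_i^2 R(\gamma(s_i))/R(p_i)$.

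The Bryant soliton case is disposed of directly from its known asymptotics $R\asymp 1/d(p,\cdot)$ and $\varphi\asymp d(p,\cdot)^{1/2}$, which give $h_1^2 R\asymp 1$ after accounting for the symmetric placement of $\Gamma$ and $\gamma$. Outside the Bryant case, Theorem \ref{l: geometry at infinity} together with Lemma \ref{l: curvature bound and inj bound} produces $\epsilon_i\to 0$ such that $(M,R(p_i)g,p_i)$ is $\epsilon_i$-close to $(\mathbb{R}\times\cigar,o_\infty)$, with $\Gamma$ converging to the $\mathbb{R}$-factor and $\sigma_i$ converging to a radial geodesic from the tip of the cigar. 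On the cigar normalized so that $R(\text{tip})=1$ one has $R_{\text{cigar}}(\rho)=\text{sech}^2(\rho/2)$, hence $\rho^2 R_{\text{cigar}}(\rho)$ is uniformly bounded on $[0,\infty)$; this is the essential analytic input that drives the estimate.

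If $\rho_i$ stays bounded along the subsequence, then $\gamma(s_i)$ lies inside the $\epsilon_i$-close region and smooth convergence yields $R(\gamma(s_i))/R(p_i)\to R_{\text{cigar}}(\rho_\infty)<\infty$, contradicting $\rho_i^2 R(\gamma(s_i))/R(p_i)\to\infty$. The substantive case is $\rho_i\to\infty$, where $\gamma(s_i)$ escapes the ball of rescaled radius $\epsilon_i^{-1}$ on which dimension reduction gives direct information. My plan here is to pick intermediate points $q_i$ on $\sigma_i$ whose rescaled distance from $p_i$ tends to infinity but stays comfortably below $\epsilon_i^{-1}$, obtain from the dimension reduction at $p_i$ the decay $R(q_i)\lesssim e^{-c\,d_{\text{resc}}(q_i,p_i)}R(p_i)$ via the cigar asymptotics, and then propagate the resulting decay outward to $\gamma(s_i)$.

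The main obstacle is precisely this last propagation, since $q_i\notin\gamma$ and so the differential Harnack identity $\langle\nabla R,\nabla f\rangle=-2\,\Ric(\nabla f,\nabla f)\le 0$—which gives monotonicity of $R$ along integral curves of $\nabla f/|\nabla f|$, in particular along $\gamma$—cannot be invoked at $q_i$ directly. The route I would pursue is to match $q_i$ with a nearby point $\gamma(s_i')$ on $\gamma$ of comparable rescaled scale (using the $O(2)$-symmetry of the cigar limit around $p_i$ together with the fact that $\gamma$ is itself an integral curve of $\nabla f/|\nabla f|$), deduce $R(\gamma(s_i'))\lesssim e^{-c\rho_i}R(p_i)$ from the estimate at $q_i$, and then use monotonicity of $R$ along $\gamma$ for $s\ge s_i'$ to conclude $R(\gamma(s_i))\lesssim e^{-c\rho_i}R(p_i)$. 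Since $\rho_i^2 e^{-c\rho_i}\to 0$, this contradicts the assumption and closes the argument.
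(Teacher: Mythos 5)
Your proposal takes a genuinely different route from the paper, but it has an essential gap at its core: it relies on the blow-down along $\Gamma$ near the foot points $p_i$ being $\mathbb{R}\times\cigar$ with the explicit profile $R_{\cigar}(\rho)=\mathrm{sech}^2(\rho/2)$, and that is not available at the stage where Lemma \ref{l: key estimate} is proved. Theorem \ref{l: geometry at infinity} identifies the dimension reduction as a cigar only under the extra hypothesis $\lim_{s\rii}R(\Gamma(s))>0$; without it, Lemma \ref{l: curvature bound and inj bound} only gives convergence to $\mathbb{R}\times(\text{a rotationally symmetric 2d ancient flow with bounded curvature})$, with no identification of the model and in particular no exponential (or any quantitative) spatial decay of curvature. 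Since Lemma \ref{l: key estimate} is itself an ingredient in the later arguments (Theorem \ref{l: positive angle} and the bootstrap in Theorem \ref{t: a ray implies Bryant soliton}) that eventually control $\lim_{s\rii}R(\Gamma(s))$, importing the cigar structure here is unproved and risks circularity.

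Even granting a cigar model at $p_i$, the main case $\rho_i\rii$ does not close. The closeness holds only on a ball of rescaled radius $\epsilon_i^{-1}$ about $p_i$, and nothing relates $\epsilon_i^{-1}$ to $\rho_i$; the decay you can extract at the intermediate points $q_i$ is of size $e^{-cD_i}$ with $D_i\le\epsilon_i^{-1}$, which may be far smaller than $\rho_i$, so the asserted bound $R(\gamma(s_i))\lesssim e^{-c\rho_i}R(p_i)$ does not follow and $\rho_i^2e^{-cD_i}$ need not tend to zero. More seriously, the proposed matching of $q_i$ with a point $\gamma(s_i')$ is not furnished by the $O(2)$-symmetry: the $O(2)$-orbit of $q_i$ is a small circle around $\Gamma$ near $\Gamma(\mu_i)$, whereas $\gamma$ passes at macroscopic distance (comparable to $s_i$) from that orbit, so there is no mechanism to transfer a curvature bound from $q_i$ to $\gamma$; monotonicity of $R$ along $\gamma$ (which is correct) only helps after such a bound is in hand. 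The paper avoids all of this: from the comparison-angle estimates of Lemmas \ref{l: f and d} and \ref{l: angles between Gamma and gamma} it derives the area bound $vol(B_N(\gamma(s),h_1(s)))\ge C^{-1}h_1^2(s)$, lifts to the universal cover of $M\setminus\Gamma$ (where the circle fibers unwrap) to get a volume lower bound $\ge C^{-1}h_1^3(s)$ at scale $h_1(s)$ around $\gamma(s)$, and then applies Perelman's curvature estimate \cite[Corollary 45.1(b)]{KL} directly to conclude $R(\gamma(s))\le Ch_1^{-2}(s)$ -- no identification of the blow-down model and no propagation of decay are needed. To salvage your strategy you would need an independent quantitative curvature decay away from $\Gamma$ at the relevant scales, which is essentially the content of the lemma itself.
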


\begin{proof}
Without loss of generality we may assume $\alpha<\pi$, because otherwise $(M,g,p)$ is $\mathbb{R}\times\cigar$, where the assertion follows from the exponential decay of the scalar curvature.

Let $p_1=\gamma(s)$ and $p_2=\Gamma(s\,\cos\frac{\alpha}{2})$.
On the one hand, since $\alpha<\pi$, we have $\Gamma(s\,\cos\frac{\alpha}{2})\rii$ as $s\rii$, which allows us to apply Lemma \ref{l: f and d} and deduce $\left|\frac{d(p,p_1)}{s}-1\right|+\left|\frac{d(p,p_2)}{s}-\cos\frac{\alpha}{2}\right|<\epsilon(s)$.
Moreover, since $|\widetilde{\measuredangle}p_1pp_2-\frac{\alpha}{2}|<\epsilon(s)$ by Lemma \ref{l: angles between Gamma and gamma}, it follows that $\left|\widetilde{\measuredangle}pp_1p_2-(\frac{\pi}{2}-\frac{\alpha}{2})\right|\le\epsilon(s)$.
Choose $p',p'_2$ in the minimizing geodesics between $p,p_1$ and $p_1,p_2$ such that $d(p_1,p'_2)=d(p_1,p')=h_1(s)$. Then by angle comparison 
$\widetilde{\measuredangle}p'p_1p'_2\ge\widetilde{\measuredangle}pp_1p_2\ge\frac{\pi}{2}-\frac{\alpha}{2}-\epsilon(s)$, and hence $\partial B_N(p_1,h_1(s))\ge d(p',p'_2)\ge C^{-1}h_1(s)$. So by volume comparison we get
\begin{equation}\label{e: Nvolume}
    vol(B_N(p_1,h_1(s)))\ge C^{-1}\,h_1^2(s).
\end{equation}

On the other hand, let $\widetilde{M_0}\longrightarrow M_0:=M\setminus\Gamma$ be the universal covering, and $(\widetilde{M_0},\widetilde{g}(t),\widetilde{p}_1)$ be the pull-back Ricci flow of $(M_0,g(t),p_1)$, $t\in(-\infty,0]$, where $g(t)$ is the Ricci flow associated to $(M,g,p)$ with $g(0)=g$.
Then
$\widetilde{g}(0)=g_N+\varphi^2 d\theta^2$, $\theta\in(-\infty,\infty)$, and by using \eqref{e: Nvolume} we get
\begin{equation}
    vol(B_{\widetilde{g}(0)}(\widetilde{p}_1,h_1(s)))\ge \frac{1}{2}h_1(s)\,vol(B_N(p_1,\frac{1}{2}h_1(s)))\ge C^{-1}\,h^3_1(s).
\end{equation}
So by applying Corollary 45.1(b) in \cite{KL}, we obtain
$R(p_1)=R(\widetilde{p}_1)\le C\,h_1^{-2}(s)$.
\end{proof}

\begin{lem}\label{l: h_1/h_2 goes to 0}
Suppose $(M,g,p)$ is not a Bryant soliton.
Then $\frac{h_2(s)}{h_1(s)}\ri0$ as $s\rii$.
\end{lem}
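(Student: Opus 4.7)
The plan is to argue by contradiction. Assume that there exist $c>0$ and $s_i\to\infty$ with $h_2(s_i)/h_1(s_i)\ge c$. For each $i$ choose $x_i\in\Gamma$ a closest point on $\Gamma$ to $\gamma(s_i)$, so $d(\gamma(s_i),x_i)=h_1(s_i)$, and let $\sigma_i:[0,h_1(s_i)]\to N$ be the unit-speed minimizing geodesic from $x_i$ to $\gamma(s_i)$. By the first-variation argument used in the proof of Lemma \ref{l: v_0} the geodesic $\sigma_i$ meets $\Gamma$ orthogonally at $x_i$, and $|\nabla\varphi|\equiv 1$ along $\Gamma$. Since $\Rm\ge 0$ and \eqref{e: computation} force $\nabla^2\varphi\le 0$ on $N$, the scalar function $r\mapsto\varphi(\sigma_i(r))$ is concave on $[0,h_1(s_i)]$ with value $0$ and slope $1$ at $r=0$. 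Hence the ratio $r\mapsto\varphi(\sigma_i(r))/r$ is non-increasing in $r\in(0,h_1(s_i)]$, so
\[
\frac{h_2(s_i)}{h_1(s_i)}=\frac{\varphi(\sigma_i(h_1(s_i)))}{h_1(s_i)}\le\frac{\varphi(\sigma_i(r))}{r}\qquad\text{for every }r\in(0,h_1(s_i)].
\]

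Since $(M,g,p)$ is not a Bryant soliton, Theorem \ref{l: geometry at infinity} and Lemma \ref{l: curvature bound and inj bound} imply that as $y_i\in\Gamma$ escapes to infinity the pointed rescalings $(M,R(y_i)g,y_i)$ sub-converge smoothly on compact sets to $\mathbb R\times\cigar$: the 2d factor is a non-compact rotationally symmetric ancient Ricci flow with positive curvature operator and bounded scalar curvature normalized to $R=1$ at the center, hence the cigar soliton by the standard classification of complete non-compact 2d ancient flows with bounded positive scalar curvature. The cigar's warping function $\varphi_\infty$ is concave and bounded, with $\varphi_\infty(0)=0$ and $\varphi_\infty'(0)=1$, so $\varphi_\infty(\bar r)/\bar r$ decreases from $1$ at $\bar r=0$ to $0$ as $\bar r\to\infty$. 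I first handle the degenerate case in which $x_i$ stays bounded: this only occurs when $(M,g)\cong\mathbb R\times\cigar$, where $\sigma_i=\gamma$ and the concavity-based inequality already gives $h_2(s_i)/h_1(s_i)\le\varphi_\infty(s_i)/s_i\to 0$. Otherwise $x_i\to\infty$ along $\Gamma$ and the dimension reduction applies at $x_i$: the rescaled $\sigma_i$ converges to a radial ray in the cigar, and $\sqrt{R(x_i)}\,\varphi(\sigma_i(\cdot/\sqrt{R(x_i)}))\to\varphi_\infty(\cdot)$ smoothly on every compact subset of $[0,\infty)$.

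Set $\bar r_i:=\sqrt{R(x_i)}\,h_1(s_i)$; after passing to a subsequence either $\bar r_i\to\infty$ or $\bar r_i$ stays bounded. In the main case $\bar r_i\to\infty$, pick $\bar R$ with $\varphi_\infty(\bar R)/\bar R<c/2$. For $i$ large one has $\bar R/\sqrt{R(x_i)}\le h_1(s_i)$, and the rescaled convergence gives
\[
\frac{\varphi(\sigma_i(\bar R/\sqrt{R(x_i)}))}{\bar R/\sqrt{R(x_i)}}=\frac{\sqrt{R(x_i)}\,\varphi(\sigma_i(\bar R/\sqrt{R(x_i)}))}{\bar R}\longrightarrow\frac{\varphi_\infty(\bar R)}{\bar R}<\frac{c}{2}.
\]
Combining with the ratio monotonicity of the first paragraph yields $h_2(s_i)/h_1(s_i)<c$ for $i$ large, contradicting $h_2(s_i)/h_1(s_i)\ge c$.

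The main obstacle is the remaining sub-case, where $\bar r_i$ stays bounded along a subsequence: the concavity-plus-dimension-reduction argument only forces $h_2(s_i)/h_1(s_i)\to\varphi_\infty(\bar r_\infty)/\bar r_\infty\in(0,1)$, which is consistent with the lower bound $c$. This sub-case forces $R(\Gamma(\bar s))$ to decay as fast as $C/h_1^2(s)$ along the sequence. To exclude it I would combine the $\mathbb Z_2$ reflection $\Gamma(\bar s_i)\mapsto\Gamma(-\bar s_i)$, which produces a second closest-point candidate at distance $\gtrsim s_i$ from $\gamma(s_i)$, with the integrated curvature lower bound $\int_{B(p,s)}R\ge C^{-1}s$ from Lemma \ref{l: int of R}: the postulated fast decay of $R$ on $\Gamma$ bounds the $R$-integral over the tubular neighborhood of $\Gamma$ by a slowly growing function, while Lemma \ref{l: key estimate} (applied both to $\gamma$ and to symmetric rays through $p$ in $\Sigma$) shows that points at distance $\gtrsim h_1(s)$ from $\Gamma$ satisfy $R\lesssim 1/h_1^2(s)$, contributing too little to $\int_{B(p,s)}R$ to sustain the linear lower bound.
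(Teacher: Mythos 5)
Your opening coincides with the paper's own proof: argue by contradiction, take the foot point $x_i\in\Gamma$ of $\gamma(s_i)$, use that $\varphi\circ\sigma_i$ is concave and vanishes at the orthogonal intersection with $\Gamma$, and then appeal to the dimension reduction along $\Gamma$. However, the way you exploit the dimension reduction has a genuine gap. First, you assert that the dimension reduction at points of $\Gamma$ is always the cigar, justified by a ``standard classification of complete non-compact 2d ancient flows with bounded positive scalar curvature.'' No such classification is proved or used in this paper: Theorem \ref{l: geometry at infinity} gives only \emph{non-compactness} of the dimension reduction in general, and identifies it with the cigar only under the additional hypothesis $\lim_{s\to\infty}R(\Gamma(s))>0$ (exactly what you cannot assume here), since only then can the potential functions be passed to the limit to make the limit a 2d steady soliton. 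A rotationally symmetric, non-compact, positively curved 2d ancient flow with bounded curvature is not, by anything available in this paper, forced to be a cigar. Fortunately, in your main case $\bar r_i\to\infty$ the cigar is not needed: any non-flat limit normalized by $R(p_\infty,0)=1$ has zero asymptotic volume ratio by \cite[Corollary 45.1(b)]{KL}, hence $\varphi_\infty(r)/r\to0$ as $r\to\infty$, which is all your argument actually uses. (Your side claim that $x_i$ bounded forces $(M,g)\cong\mathbb{R}\times\textnormal{Cigar}$ is also asserted without proof.)

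The decisive problem is the sub-case $\bar r_i=\sqrt{R(x_i)}\,h_1(s_i)$ bounded, which you acknowledge but only sketch. The sketch does not close: the postulated decay $R(x_i)\le C h_1^{-2}(s_i)$ at the foot points is perfectly compatible with the linear lower bound $\int_{B(p,s)}R\ge C^{-1}s$ of Lemma \ref{l: int of R}, because that integral is sourced by curvature-scale balls spread along all of $\Gamma([0,s])$, not at the particular points $x_i$ (and only along a subsequence at that); moreover Lemma \ref{l: key estimate} controls $R$ only along $\gamma$, so the proposed upper bound for the integral over the complement of a tube around $\Gamma$ is not available. This is exactly the point where the paper uses a different mechanism: it deduces from the non-compactness of \emph{every} dimension reduction (Theorem \ref{l: geometry at infinity}) that $h_1(s)R^{1/2}(\Gamma(s))\to\infty$, i.e.\ your bounded sub-case cannot occur, and then converts the assumed bound $h_2(s_i)/h_1(s_i)\ge c$, via concavity of $\varphi$, into the linear lower bound $\varphi(\sigma_i(r))\ge c\,r$ on $[0,h_1(s_i)]$, hence a positive asymptotic volume ratio for the rescaled rotationally symmetric surfaces $\phi^{-1}(\sigma_i)$; the limit surface is then flat by the zero-AVR property of non-flat ancient flows, contradicting $R(p_\infty,0)=1$. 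Until you either prove the missing statement $\sqrt{R(x_i)}\,h_1(s_i)\to\infty$ (as the paper does, from non-compactness rather than from cigar structure) or replace your width argument by an AVR argument of this type, the proof is incomplete.
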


\begin{proof}
Suppose by contradiction that there is a sequence $s_i\rii$ such that $\frac{h_2(s_i)}{h_1(s_i)}\ge C^{-1}>0$ for some $C>0$ and all $i$.
Let $\sigma_i$ be a minimizing geodesic from $\gamma(s_i)$ to some $q_i\in\Gamma$ such that $h_1(s_i)=d(\gamma(s_i),q_i)$. Then $\sigma_i$ intersects with $\Gamma$ orthogonally at $q_i$. 
Let $\Sigma_i=\phi^{-1}(\sigma_i)$, where $\phi:(M\setminus\Gamma,g)\rightarrow (N,g_N)$ is the Riemannian submersion. Then $(\Sigma_i,g_i)$ is a smooth rotationally symmetric surface with non-negative curvature, where $g_i$ is the metric induced by $g$.
Then by Theorem \ref{l: geometry at infinity}, $(\Sigma_i,R(\Gamma(s_i))g_i)$ smoothly converges to the time-0-slice of a non-compact ancient Ricci flow $g_{\infty}(t)$.

Moreover, by Theorem \ref{l: geometry at infinity} we know that any blow-down limit along $\Gamma$ is a product of $\mathbb{R}$ and a non-compact ancient Ricci flow, from which it follows that $\lim_{s\rii}h_1(s)R^{1/2}(\gs)=\infty$. This combining with
$\frac{h_2(s_i)}{h_1(s_i)}\ge C^{-1}$ and a volume comparison implies that the asymptotic volume ratio of $g_{\infty}(0)$ is positive, and hence $g_{\infty}(t)$ is flat, a contradiction.
\end{proof}

\begin{lem}\label{l: h_1h_2=s}
Suppose the asymptotic cone of $(M,g,p)$ is a ray.
Then there is some $C>0$ such that
$h_1(s)h_2(s)\ge C^{-1}s$ for all large $s$.
\end{lem}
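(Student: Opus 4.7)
The plan is to combine the integrated scalar curvature bound from Lemma \ref{l: int of R} with a divergence-theorem argument on the totally geodesic surface $N$. The key algebraic identity is
\[
R\varphi = \mathrm{div}_N(\varphi \nabla_N f) \quad\text{on } N,
\]
which follows from the warped-product Laplacian formula $\Delta_M u = \Delta_N u + \varphi^{-1}\langle \nabla_N \varphi, \nabla_N u\rangle$ for any $O(2)$-invariant $u$, applied to $u = f$, together with $\Delta_M f = R$. This turns area integrals of $R\varphi$ into boundary fluxes of $\varphi \nabla f$.

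Let $L$ be the projection of $\gamma(s)$ onto $\Gamma$ and let $\sigma:[0,h_1(s)]\to N$ be the minimizing geodesic from $\Gamma(L)$ to $\gamma(s)$, which is perpendicular to $\Gamma$ at $\Gamma(L)$. Apply the divergence theorem to the geodesic triangle $D\subset N$ with vertices $p,\Gamma(L),\gamma(s)$, bounded by $\Gamma^+|_{[0,L]}$, $\sigma$, and $\gamma|_{[0,s]}$. The $\Gamma$-piece of the boundary contributes zero because $\varphi=0$ there, and the $\gamma$-piece contributes zero because $\nabla f$ is tangent to $\gamma$ (so $\langle \nabla f,\nu_\gamma\rangle=0$); hence
\[
\int_D R\varphi \, dA_N = \int_\sigma \varphi\,\langle \nabla f, \nu_\sigma\rangle \, d\ell.
\]
For the lower bound, since the asymptotic cone is a ray, Toponogov comparison gives $L = s-o(s)$ and, via $d(\sigma(r),p)\ge L$, places $\sigma$ outside $B_N(p,s-o(s))$, so $D$ contains the $\Gamma^+$-side of $B_N(p,s-o(s))$. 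Combining with $\int_{B(p,s)} R\,dvol_M = 2\pi \int_{B_N(p,s)} R\varphi\,dA_N$, the $\mathbb{Z}_2$-symmetry across $\gamma$, and Lemma \ref{l: int of R}, I obtain $\int_D R\varphi \, dA_N \ge C^{-1}s$ for $s$ large.

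For the upper bound, the identity $|\nabla f|^2 + R = R(p)$ gives $|\nabla f|\le \sqrt{R(p)}$, and it suffices to show $\int_\sigma \varphi\,d\ell \le C\,h_1(s)h_2(s)$. The function $\psi(r):=\varphi(\sigma(r))$ on $[0, h_1(s)]$ is concave (from $\nabla^2\varphi \le 0$) with $\psi(0) = 0$, $\psi'(0) = 1$, and $\psi(h_1(s)) = h_2(s)$; if $\psi$ is monotonically non-decreasing, then $\psi\le h_2(s)$ on $\sigma$, giving the desired bound and hence $h_1(s)h_2(s)\ge C^{-1}s$. The hard part is establishing this monotonicity, and I would derive it from Theorem \ref{l: geometry at infinity} and Lemma \ref{l: key estimate}: the latter gives $h_1(s)\le C\,R(\gamma(s))^{-1/2}$, which by the Shi-type gradient estimate for $R$ is comparable to $R(\Gamma(L))^{-1/2}$, so in the rescaled picture at $\Gamma(L)$ the geodesic $\sigma$ has bounded length and lies inside the region where the soliton is $\epsilon$-close to the product $\mathbb{R}\times g_\infty$ for a non-compact rotationally symmetric $2$-dimensional ancient flow $g_\infty$; in $g_\infty$ the warping function $\varphi_\infty$ is concave, starts with slope $1$ at the fixed point, and is monotonically increasing on $(0,\infty)$, and smooth convergence transfers this monotonicity to $\psi$ for $s$ sufficiently large, completing the proof.
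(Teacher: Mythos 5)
Your overall strategy (divergence theorem for $R\varphi=\mathrm{div}_N(\varphi\nabla_N f)$, lower bound from Lemma \ref{l: int of R}, upper bound of the flux by $\sup\varphi$ times the length of the ``cap'' geodesic) is the same as the paper's, but your choice of cap creates a genuine gap in the upper-bound half. Along your geodesic $\sigma$ from the foot point $\Gamma(L)$ to $\gamma(s)$, the function $\psi(r)=\varphi(\sigma(r))$ is concave with $\psi(0)=0$, $\psi'(0)=1$, and this only gives $\psi(r)\le r$, i.e.\ $\int_\sigma\varphi\le\tfrac12 h_1^2(s)$; nothing forces $\psi$ to be maximized near the endpoint $\gamma(s)$, so the bound $\psi\le C\,h_2(s)$ that you need is exactly the missing ingredient. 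Your proposed rescue via Theorem \ref{l: geometry at infinity} and Lemma \ref{l: key estimate} does not close it: (i) the asserted comparability $R(\gamma(s))\approx R(\Gamma(L))$ does not follow from Shi-type estimates, which give additive (not multiplicative) control of $R$ at points a fixed distance apart, and the containment of $\sigma$ in the region of $\epsilon$-closeness at the scale $R^{-1/2}(\Gamma(L))$ is precisely what this comparability would be needed for, so the argument is circular as stated; (ii) even granting the containment, smooth closeness to $\mathbb{R}\times g_\infty$ only yields approximate monotonicity $\psi'\ge-\delta$ with $\delta=\delta(\epsilon)$, so $\sup\psi\le h_2(s)+\delta\,h_1(s)$ and the error term contributes $\delta\,h_1^2(s)$ to $\int_\sigma\varphi$; since $h_1(s)/h_2(s)\to\infty$ by Lemma \ref{l: h_1/h_2 goes to 0}, there is no control of $\delta\,h_1^2$ by $h_1h_2$ without a quantitative rate relating $\delta(s)$ to $h_2(s)/h_1(s)$, which you do not have.

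The paper's proof is designed to avoid exactly this issue by choosing a different cap: take the nearest point $q\in\Gamma$ to $\gamma(s)$ and the minimizing geodesic from $q$ to its $\mathbb{Z}_2$-image $\overline q$. By the $\mathbb{Z}_2$-symmetry this geodesic meets $\Sigma$ orthogonally at its midpoint $\sigma(0)=\gamma(s')$, so $\frac{d}{dr}\big|_{r=0}\varphi(\sigma(r))=0$, and then concavity of $\varphi$ gives $\varphi(\sigma(r))\le\varphi(\sigma(0))=h_2(s')\le 2h_2(s)$ along the entire cap (the last step using $s'\le(1+\epsilon(s))s$, via Lemma \ref{l: f and d} and the ray hypothesis, together with concavity of $h_2$), while its length is $d(q,\overline q)=2d(q,\Sigma)\le 2h_1(s)$. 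This yields $\mathrm{Area}(\partial\Omega(s))\le C h_1(s)h_2(s)$ with no monotonicity input at all. Your lower-bound half is essentially the paper's (the containment of the half-ball in your triangle needs the same care as the paper's $B(p,\tfrac12 s)\subset\Omega(s)$, and is fine), so the fix is to replace your cap by the reflected geodesic $q\overline q$; as it stands, the monotonicity step is an unproved claim on which the whole upper bound rests.
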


\begin{proof}
The assertion clearly holds when $(M,g,p)$ is a Bryant soliton, so we may assume below that $(M,g,p)$ is not a Bryant soliton.

On the one hand, since $h_1(s)=d(\gamma(s),\Gamma)$, we have $d(q,\gamma(s))=h_1(s)$ for some $q\in\Gamma$. Let $\overline{q}$ be the image of $q$ under the $\mathbb{Z}_2$-action, and
$\sigma:[-\frac{1}{2}d(q,\overline{q}),\frac{1}{2}d(q,\overline{q})]$ be a minimizing geodesic from $q$ to $\overline{q}$. Then by the $\mathbb{Z}_2$-symmetry it follows that $\sigma$ intersects orthogonally with $\Sigma$ at $\sigma(0)$ and 
\begin{equation}
    d(q,\sigma(0))=d(q,\Sigma)=\frac{1}{2}d(q,\overline{q}).
\end{equation}
Moreover, by replacing $\sigma$ with its image under some $O(2)$-action, we may assume $\sigma(0)\in\gamma$.
So we have
\begin{equation}\label{e: le than h_1}
    \frac{1}{2}d(q,\overline{q})=d(q,\gamma)\le d(q,\gamma(s))= h_1(s).
\end{equation}

Since the asymptotic cone is a ray, by Lemma \ref{l: angles between Gamma and gamma} and $h_1(s)=d(\gamma(s),\Gamma)\le d(\gamma(s),\Gamma(s))$, we see $h_1(s)\le\epsilon(s)s$.
So by Lemma \ref{l: f and d} and using triangle inequality we obtain
\begin{equation}
    \begin{split}
        d(p,\sigma(0))&\le d(p,\gamma(s))+ d(\gamma(s),q)+d(q,\sigma(0))
        \le d(p,\gamma(s))+2 h_1(s)
        \le (1+\epsilon(s))s.
    \end{split}
\end{equation}
Suppose $\sigma(0)=\gamma(s')$ for some $s'>0$, then by Lemma \ref{l: f and d} this implies $s'\le(1+\epsilon(s))s$, which by the concavity of $h_2$ yields 
\begin{equation}\label{e: h_2}
    h_2(s)\ge(1-\epsilon(s))h_2(s')\ge\frac{1}{2}h_2(s').
\end{equation}

On the other hand, let $\Omega(s)\subset M$ be the domain bounded by $\phi^{-1}(\sigma)$, where $\phi:(M\setminus\Gamma,g)\rightarrow (N,g_N)$ is the Riemannian submersion, then 
\begin{equation}
    d(\partial\Omega(s),p)\ge d(p,\sigma(0))-d(q,\sigma(0))\ge (1-\epsilon(s))s-h_1(s)\ge (1-\epsilon(s))s,
\end{equation}
which implies $\Omega(s)\supset B(p,\frac{1}{2}s)$, 
So by Stokes' theorem, $R=\Delta f$, and Lemma \ref{l: int of R} we obtain
\begin{equation}\label{e: area}
    \textnormal{Area}(\partial\Omega(s))\ge\int_{\partial\Omega(s)}\langle\nabla f,\Vec{n}\rangle
    =\int_{\Omega(s)}\Delta f\,dvol_M\ge\int_{B(p,\frac{1}{2}s)}R\,dvol_M\ge C^{-1}\,s.
\end{equation}
By the $\mathbb{Z}_2$-symmetry we have $\frac{d}{dr}\mid_{r=0}\varphi(\sigma(r))=0$, which combining with the concavity of the warping function $\varphi$ implies
 $\varphi(\sigma(r))\le \varphi(\sigma(0))= h_2(s')$ for all $r\in[-\frac{1}{2}d(p,\overline{p}),\frac{1}{2}d(p,\overline{p})]$.
So 
\begin{equation}
    \textnormal{Area}(\partial\Omega(s))=\int_0^{2\pi}\int_{-\frac{1}{2}d(q,\overline{q})}^{\frac{1}{2}d(q,\overline{q})}\varphi(\sigma(r))\,dr\,d\theta\le 2\pi\,d(q,\overline{q})h_2(s')\le C h_1(s)h_2(s),
\end{equation}
where we used \eqref{e: le than h_1} and \eqref{e: h_2} in the last inequality. This together with \eqref{e: area} proves the lemma.

\end{proof}

\begin{lem}\label{l: alpha=0}
Suppose the asymptotic cone is a ray, and $\lim_{s\rii}h_2(s)<\infty$. Then $\lim_{s\rii}R(\gs)>0$.
\end{lem}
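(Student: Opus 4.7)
The plan is to argue by contradiction: I suppose $\liminf_{\tau\rii}R(\Gamma(\tau))=0$ and use dimension reduction together with the $h_2$-bounded assumption to force $\varphi(\gamma(s_i))\rii$ along some sequence, contradicting the hypothesis. As preparation, $\lim h_2<\infty$ rules out the Bryant soliton (where $h_2\sim\sqrt{s}$ along $\gamma$), so Theorem~\ref{l: geometry at infinity} applies; and Lemma~\ref{l: h_1h_2=s} combined with $h_2\le L:=\sup h_2$ gives $h_1(s)\ge C^{-1}s/L\rii$.

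Fix $\tau_i\rii$ with $K_i:=R(\Gamma(\tau_i))\ri 0$. By Lemma~\ref{l: curvature bound and inj bound} and Theorem~\ref{l: geometry at infinity}, after passing to a subsequence $(M,K_ig(K_i^{-1}t),\Gamma(\tau_i))$ smoothly converges to $(\mathbb{R}\times g_\infty(t),p_\infty)$ where $g_\infty$ is a non-compact 2d rotationally symmetric ancient Ricci flow of bounded positive curvature with $R_{g_\infty}(p_\infty,0)=1$; by the classification of such 2d ancient solutions, $g_\infty$ is the cigar soliton, with warping $\tilde\varphi(\tilde u)=2\tanh(\tilde u/2)\nearrow 2$. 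Rescaled back to the original scale, the warping along the geodesic $c_{\tau_i}\subset N$ perpendicular to $\Gamma$ at $\Gamma(\tau_i)$ satisfies
\[
\varphi(u)=(2+o(1))K_i^{-1/2}\tanh\!\bigl(\sqrt{K_i}\,u/2\bigr)
\]
on $\{u\le DK_i^{-1/2}\}$ for any fixed $D>0$.

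I realize each $\tau_i$ as the closest-point projection $\tau(s_i)$ of $\gamma(s_i)$ onto $\Gamma$: the map $\tau:[0,\infty)\to[0,\infty)$ is continuous, $\tau(0)=0$, and $\tau(s)\ge d(p,\gamma(s))-h_1(s)\rii$ (using Lemma~\ref{l: f and d} for $d(p,\gamma(s))\sim s$, and Lemma~\ref{l: angles between Gamma and gamma}(3) for $h_1(s)\le d(\gamma(s),\Gamma(s))=o(s)$), so $\tau$ is surjective and $s_i\rii$ with $h_1(s_i)\rii$. Since $N$ is totally geodesic in $M$, the geodesic $c_{\tau_i}$ lies in $N$ and passes through $\gamma(s_i)$ at parameter $u=h_1(s_i)$. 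The warping $\varphi$ along $c_{\tau_i}$ is concave and monotone non-decreasing (from $\varphi(0)=0$, $\varphi'(0)=1$, and $\nabla^2\varphi\le 0$ via~\eqref{e: computation}), so by monotonicity the dimension-reduction approximation propagates beyond the ball $\{u\le DK_i^{-1/2}\}$. Using the elementary bound $\tanh(x)\ge\tfrac12\min(x,1)$ then yields
\[
\varphi(\gamma(s_i))\ \ge\ \tfrac12\min\!\bigl(h_1(s_i),\,K_i^{-1/2}\bigr)-o(1),
\]
which tends to $\infty$ because both $h_1(s_i)\rii$ and $K_i^{-1/2}\rii$. This gives $h_2(s_i)=\varphi(\gamma(s_i))\rii$, contradicting $h_2\le L$. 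Finally, $\frac{d}{d\tau}R(\Gamma(\tau))=-2\Ric(\nabla f,\nabla f)/|\nabla f|\le 0$ shows $R(\Gamma(\cdot))$ is monotone non-increasing, so the $\liminf$ equals the limit.

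The main obstacle will be propagating the cigar approximation from the fixed-radius dimension-reduction ball out to the location $u=h_1(s_i)$ of $\gamma(s_i)$, where Lemma~\ref{l: curvature bound and inj bound} no longer applies directly; the concavity and monotonicity of $\varphi$ along the cross-section $c_{\tau_i}$ are exactly what purchase this propagation.
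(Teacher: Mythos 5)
Your overall strategy is the same as the paper's (dimension reduction near $\Gamma$ gives a warping function comparable to $R^{-1/2}(\Gamma)$ at that scale, and concavity of $\varphi$ is supposed to transport this lower bound out to a point where $\varphi=h_2$ is bounded, forcing $R(\Gamma)$ to stay bounded below), and several of your ingredients are sound: ruling out the Bryant soliton via $h_2\sim\sqrt{s}$, the monotonicity $\frac{d}{ds}R(\Gamma(s))=-2\Ric(\nabla f,\nabla f)/|\nabla f|\le 0$, and $h_1(s)\rii$ from Lemma \ref{l: h_1h_2=s}. However, the step you yourself flag as ``the main obstacle'' is a genuine gap, not a technicality. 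Along your cross-sectional geodesic $c_{\tau_i}$ you only know $\varphi(0)=0$, $\varphi'(0)=1$ and $\nabla^2\varphi\le 0$; concavity with these initial conditions does \emph{not} make $\varphi\circ c_{\tau_i}$ non-decreasing on the bounded interval $[0,h_1(s_i)]$. Concavity only yields \emph{upper} bounds beyond the region where the dimension-reduction approximation holds: the function can peak inside $[0,DK_i^{-1/2}]$ and then decrease, and positivity at the endpoint gives no quantitative lower bound there. So in the case $h_1(s_i)\gg K_i^{-1/2}$ your inequality $\varphi(\gamma(s_i))\ge\tfrac12\min(h_1(s_i),K_i^{-1/2})-o(1)$ is not established, and the contradiction with $h_2\le L$ does not follow. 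The paper's proof supplies exactly the missing monotonicity by a different choice of geodesic: it takes the minimizing geodesic $\sigma$ from $\Gamma(s)$ to $\Gamma(-s)$, whose midpoint lies on $\Sigma$ and meets it orthogonally by the $\mathbb{Z}_2$-symmetry, so that $\frac{d}{dr}\varphi(\sigma(r))=0$ at the midpoint; concavity then forces $\varphi\circ\sigma$ to be non-decreasing up to the midpoint, the midpoint value is at most $\lim_{r\rii}h_2(r)$, and comparing with $\varphi(\sigma(R^{-1/2}(\Gamma(s))))\ge C^{-1}R^{-1/2}(\Gamma(s))$ (your dimension-reduction input, the paper's \eqref{e: r(s)}) gives the conclusion without ever needing a lower bound for $\varphi$ at $\gamma(s)$ along a foot-point geodesic.

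A secondary issue: in your contradiction scenario $K_i=R(\Gamma(\tau_i))\ri0$, so the rescaled potentials do not converge and you cannot invoke the ``cigar'' conclusion of Theorem \ref{l: geometry at infinity} (it is proved only under $\lim_{s\rii}R(\Gamma(s))>0$); identifying the limit as the cigar would require a classification of non-compact 2d ancient flows with bounded curvature that the paper neither uses nor cites. This part is repairable, since all you actually need is a lower bound for the rescaled warping function at bounded rescaled distance from $\Gamma(\tau_i)$, which follows from smooth convergence, rotational symmetry, and the uniform curvature bound of Lemma \ref{l: curvature bound and inj bound} together with non-compactness from Theorem \ref{l: geometry at infinity} --- but the explicit $\tanh$ profile is not available to you, and in any case the propagation gap above is the decisive one.
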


\begin{proof}
Suppose $s$ is sufficiently large, and assume $\lim_{r\rii}\varphi(\gamma(r))=\lim_{r\rii}h_2(r)=C$ for some $C>0$. Let $p_1=\Gamma(s)$, $p_2=\Gamma(-s)$, and $\sigma:[0,d(p_1,p_2)]\ri M$ be a minimizing geodesic from $p_1$ to $p_2$.
Let $pp_1,pp_2, p_1p_2=\sigma$ be minimizing geodesics between these points.
Then since $\widetilde{\measuredangle} p_1pp_2\le\epsilon(s)$, we have $\measuredangle pp_1p_2\ge\widetilde{\measuredangle}pp_1p_2\ge\frac{\pi}{2}-\epsilon(s)$.

For some $s'>>s$, take $q=\gamma(s')$, and let $qp_1,qp_2$ be minimizing geodesics between these point. By replacing $\sigma=p_1p_2$ and $pp_1$ with their image under suitable $O(2)$-actions, we may assume that $\measuredangle pp_1p_2+\measuredangle qp_1p_2\le\pi$.
Since by angle comparison $\measuredangle p_2p_1q\ge\widetilde{\measuredangle} p_2p_1q\ge\frac{\pi}{2}-\epsilon(s)$, it follows that $\left|\measuredangle pp_1p_2-\frac{\pi}{2}\right|\le\epsilon(s)$.
Note by Lemma \ref{l: f and d} we have $\measuredangle(\nabla f(p_1),pp_1)\le\epsilon(s)$, so by triangle inequality we obtain
\begin{equation}
   \left|\langle\nabla f,\sigma'(r)\rangle(0)\right|+\left|\langle\nabla f,\sigma'(r)\rangle(d(p_2,p_1))\right|\le\epsilon(s).
\end{equation}

By the dimension reduction Theorem \ref{l: geometry at infinity} we have $R^{-1/2}(\gs)<\frac{1}{2}d(p_1,p_2)$ and
\begin{equation}\label{e: r(s)}
    \varphi(\sigma(R^{-1/2}(\gs)))\ge C^{-1}R^{-1/2}(\gs).
\end{equation}
By the $\mathbb{Z}_2$-symmetry it follows that $\sigma$ intersects with $\Sigma$ orthogonally at $\sigma\left(\frac{1}{2}d(p_1,p_2)\right)$, and $\left.\frac{d}{dr}\right|_{r=\frac{1}{2}d(p_1,p_2)}\varphi(\sigma(r))=0$. So by the concavity of $\varphi$ we get
\begin{equation}
    \varphi(\sigma(R^{-1/2}(\gs)))\le\varphi\left(\sigma\left(\frac{1}{2}d(p_1,p_2)\right)\right)\le \lim_{r\rii}\varphi(\gamma(r))=C,
\end{equation}
which together with \eqref{e: r(s)} implies the lemma.
\end{proof}

Now we prove Theorem \ref{l: positive angle} of the equation $\lim_{s\rii}R(\Gamma(s))=\sin^2\frac{\alpha}{2}$.


\begin{proof}[Proof of Theorem \ref{l: positive angle}]
Without loss of generality we may assume $\Rm>0$, and $(M,g,f,p)$ is not a Bryant soliton, since the theorem clearly holds for $\mathbb{R}\times\cigar$ and the Bryant soliton.
We may also assume $R(p)=1$.

For each fixed $s$ sufficiently large, let  $\sigma:[0,d(p_1,p_2)]\ri M$ be a minimizing geodesic from $p_1=\Gamma(s)$ to $p_2=\Gamma(-s)$. 
By the soliton equation $\nabla^2 f=\Ric$ and by integration by parts we obtain
\begin{equation}\label{e: LHS}
    \langle\nabla f,\sigma'(r)\rangle\mid_{0}^{d(p_2,p_1)}=\int_0^{d(p_2,p_1)}\Ric(\sigma'(r),\sigma'(r))\,dr.
\end{equation}
First, we claim
\begin{equation}\label{e: sigma}
    \left|\langle\nabla f,\sigma'(r)\rangle\mid_{0}^{d(p_2,p_1)}-2|\nabla f|(\Gamma(s))\sin\frac{\alpha}{2}\right|\le\epsilon(s).
\end{equation}
If $\alpha=0$, the claim holds by Lemma \ref{l: alpha=0}.
So we may assume $\alpha>0$.

Let $p_3=\Gamma(2s)$, and $pp_2,pp_1,p_1p_2,p_1p_3,p_2p_3$ be minimizing geodesics between these points, where $p_1p_2=\sigma$ in particular.
On the one hand, by replacing geodesics $pp_1,p_1p_3$ with their images under suitable $O(2)$-actions (note $p,p_1,p_3\in\Gamma$ are fixed under $O(2)$-actions), we may assume $\measuredangle pp_1p_2+\measuredangle p_2p_1p_3\le\pi$.
On the other hand, by Lemma \ref{l: f and d} and Lemma \ref{l: angles between Gamma and gamma} we obtain
\begin{equation*}
    \left|\frac{d(p,p_1)}{s}-1\right|
    +\left|\frac{d(p,p_3)}{s}-2\right|
    +\left|\frac{d(p_1,p_2)}{s}-\sqrt{2-2\cos\alpha}\right|
    +\left|\frac{d(p_2,p_3)}{s}-\sqrt{5-4\cos\alpha}\right|\le\epsilon(s).
\end{equation*}
Since $\alpha>0$, we have $\sqrt{2-2\cos\alpha}>0$. So by the cosine formula we obtain 
\begin{equation}
    \left|\widetilde{\measuredangle}pp_1p_2-\frac{\pi-\alpha}{2}\right|+\left|\widetilde{\measuredangle}p_2p_1p_3-\frac{\pi+\alpha}{2}\right|\le\epsilon(s).
\end{equation}
Then by the angle comparison it follows that $\measuredangle pp_1p_2\ge\frac{\pi-\alpha}{2}+\epsilon(s)$ and $\measuredangle p_2p_1p_3\ge\frac{\pi+\alpha}{2}+\epsilon(s)$, which combining with $\measuredangle pp_1p_2+\measuredangle p_2p_1p_3\le\pi$ implies 
\begin{equation}
    \left|\measuredangle pp_1p_2-\left(\frac{\pi-\alpha}{2}\right)\right|\le\epsilon(s).
\end{equation}
Note by Lemma \ref{l: f and d} the angle between $\nabla f$ and the tangent vector of $pp_1$ at $p_1$ is smaller than $\epsilon(s)$, this implies
claim \eqref{e: sigma}.

Next, by the Dimension Reduction Theorem \ref{l: geometry at infinity}, $(M,R(\gs)g,\gs)$ is $\epsilon(s)$-close to $\mathbb{R}\times\cigar$, so we can find $D(s)< \min\{\frac{1}{2}d(p_2,p_1),\frac{1}{2}\epsilon(s)^{-1}\}$ such that $\lim_{s\rii}D(s)R^{1/2}(\Gamma(s))=\infty$. So it follows that $d(\sigma(D(s)),\Gamma)\ge\frac{1}{2}D(s)\cos\frac{\alpha}{2}$. 
Then by the same argument as in Lemma \ref{l: key estimate} we get $R\le C(D(s))^{-2}$ in the two metric balls of radius  $\frac{1}{2}\cos\frac{\alpha}{2}D(s)$ which are centered at $\sigma(D(s))$ and $\sigma(d(p_2,p_1)-D(s))$. This implies by the second variation formula that
\begin{equation}\label{e: D_3}
    \int_{D(s)}^{d(p_2,p_1)-D(s)}\Ric(\sigma'(r),\sigma'(r))\,dr\le\frac{C}{D(s)}
    \le\epsilon(s)R^{1/2}(\gs).
\end{equation}

If $\lim_{s\rii}R(\Gamma(s))=0$, by the uniform curvature bound for all dimension reductions we have 
\begin{equation}\label{e: D(s)_2}
    R^{-1/2}(\Gamma(s))\int_I\,\Ric(\sigma'(r),\sigma'(r))\,dr\le C,
\end{equation}
where $C>0$ is a constant independent of $s$, and $I=[0,D(s)]\cup[d(p_1,p_2)-D(s),d(p_1,p_2)]$ 
This combining with \eqref{e: sigma}\eqref{e: D_3} and \eqref{e: LHS} implies $\alpha=0$. So the theorem holds in this case.

If $\lim_{s\rii}R(\Gamma(s))>0$, the dimension reduction is a cigar soliton with scalar curvature equal to $1$ at the tip, and it follows that
\begin{equation}\label{e: D(s)}
    \left|\left(R^{-1/2}(\Gamma(s))\int_{I}\,\Ric(\sigma'(r),\sigma'(r))\,dr\right)-2\cos\frac{\alpha}{2}\right|\le\epsilon(s),
\end{equation}
where we used the fact that for a cigar soliton with the sectional curvature $K$ equal to $\frac{1}{2}$ at the tip, the integral of $K$ along a geodesic emanating from $p$ is $\int_0^{\infty}K\,dr=\int_0^{\infty}\frac{1}{2}\textnormal{sech}^2(\frac{1}{2} r)\,dr=1$.
This combining with \eqref{e: D_3} implies 
\begin{equation}\label{e: 222}
    \left|\int_0^{d(p_2,p_1)}\Ric(\sigma'(r),\sigma'(r))\,dr-2R^{1/2}(\Gamma(s))\cos\frac{\alpha}{2}\right|\le\epsilon(s).
\end{equation}
Combining \eqref{e: sigma}\eqref{e: 222} in \eqref{e: LHS} and letting $s\rii$ we obtain 
\begin{equation}
    \lim_{s\rii}|\nabla f|(\Gamma(s))\sin\frac{\alpha}{2}=\lim_{s\rii}R^{1/2}(\Gamma(s))\cos\frac{\alpha}{2}.
\end{equation}
By the identity $R+|\nabla f|^2=R(p)=1$, this implies $\lim_{s\rii}R^{1/2}(\Gamma(s))=\sin\frac{\alpha}{2}$ and $\lim_{s\rii}|\nabla f|(\Gamma(s))=\cos\frac{\alpha}{2}$, which proves the theorem.

\end{proof}

Corollary \ref{c: aymptotic geometry} follows
immediately from Theorem \ref{l: positive angle} and Theorem \ref{l: geometry at infinity}.

Now we prove Theorem \ref{t: a ray implies Bryant soliton} by a bootstrap argument: 
First, since $g=g_N+\varphi^2d\theta^2$ on $M\setminus\Gamma$, the vector field $\frac{\partial}{\partial\theta}$ is a killing field.
Then by the killing equation we can establish the following relation between the Ricci curvature and the warping function $\varphi$, when they are restricted on $\gamma\subset\Sigma$:
\begin{equation}\label{e: good}
    \Ric\left(\frac{\partial}{\partial\theta},\frac{\partial}{\partial\theta}\right)=|\nabla f|(\gamma(s))h_2(s)h'_2(s).
\end{equation}
Recall we define $h_2(s)=\varphi(\gamma(s))$.

Suppose that the soliton is not a Bryant soliton, then by combining the estimates from Lemma \ref{l: key estimate}-\ref{l: h_1h_2=s} in the equation \eqref{e: good}, we
obtain that $h_2(s)<<s^{1/2}$. Replacing Lemma \ref{l: h_1/h_2 goes to 0} with this new upper bound, then the same argument shows that $h_2(s)\le C$. 
This implies $\lim_{s\rii}R(\gs)>0$, and by Theorem \ref{l: positive angle} we obtain a contradiction.

\begin{proof}[Proof of Theorem \ref{t: a ray implies Bryant soliton}]
Let $\epsilon(s)$ be constants that converge to $0$ as $s\rii$, and let $C$ denote all constants that are uniform for all large $s$.
Suppose by contradiction that $M$ is not a Bryant soliton. We shall use the notations in Lemma \ref{l: key estimate}-\ref{l: h_1h_2=s}.
Since $g=g_N+\varphi^2d\theta^2$ on $M\setminus\Gamma$, it follows that
$X:=\frac{\partial}{\partial\theta}$ is a killing field.
So by the identity of killing field we have
\begin{equation}
    \langle\nabla_X X,\nabla f\rangle +
    \langle\nabla_{\nabla f}X,X\rangle=0.
\end{equation}
Note that $\langle X,\nabla f\rangle=0$ and $\nabla^2 f=\Ric$, this gives the identity
\begin{equation}\label{e: key identity}
    \Ric\left(\frac{X}{|X|},\frac{X}{|X|}\right)=\frac{\nabla f(|X|)}{|X|}.
\end{equation}

Restrict the LHS of \eqref{e: key identity} on $\gamma(s)$ and abbreviate it by $\widetilde{R}(s)$. Then by the relations among $h_1(s),h_2(s)$ and $R(\gamma(s))$ from Lemma \ref{l: h_1h_2=s}, \ref{l: h_1/h_2 goes to 0}, and \ref{l: key estimate} we obtain
\begin{equation}
    s\,\widetilde{R}(s)\le s\,R(\gamma(s))\le Ch_1(s)h_2(s)R(\gamma(s))\le\epsilon(s)h_1(s)^2R(\gamma(s))\le\epsilon(s),
\end{equation}
which by \eqref{e: key identity}, $\lim_{s\rii}|\nabla f|(\gamma(s))=C>0$, and $h'_2(s)\ge 0$ implies
\begin{equation}
    \frac{h'_2(s)}{h_2(s)}\le\frac{\nabla f(h_2(s))}{|\nabla f|\cdot h_2(s)}=\frac{2\widetilde{R}(s)}{|\nabla f|}<\frac{\epsilon(s)}{Cs}<\frac{\epsilon_0}{s},
\end{equation}
for all large $s$ and some $\epsilon_0\in(0,\frac{1}{2})$. So $h_2(s)<C s^{\epsilon_0}$ for all large $s$.

Next, by using $h_2(s)<C s^{\epsilon_0}$ and applying Lemma \ref{l: h_1h_2=s} again we obtain $h_1(s)\ge C^{-1}s^{1-\epsilon_0}$,
which combining with Lemma \ref{l: key estimate} again gives
\begin{equation}
   \widetilde{R}(s)\le R(\gamma(s))\le Cs^{-2+2\epsilon_0}.
\end{equation}
Now substituting this into equation \eqref{e: key identity} we obtain
\begin{equation}
    \frac{h'_2(s)}{h_2(s)}< Cs^{-2+2\epsilon_0},
\end{equation}
which implies $h_2(s)< Ce^{-Cs^{-1+2\epsilon_0}}$,
and hence $\lim_{s\rii}h_2(s)<\infty$. 
This by Lemma \ref{l: alpha=0} implies $\lim_{s\rii}R(\gs)>0$, which by Theorem \ref{l: positive angle} yields a contradiction.
\end{proof}

Corollary \ref{c: unique of bryant} follows directly from Theorem \ref{t: a ray implies Bryant soliton}. It remains to prove Corollary \ref{c: alpha_i}.

\begin{proof}[Proof of Corollary \ref{c: alpha_i}]
First, by the proof of Theorem \ref{t: existence with prescribed eigenvalue} and Theorem \ref{t: a ray implies Bryant soliton} there exists a sequence of $\mathbb{Z}_2\times O(2)$-symmetric 3d expanding gradient solitons with positive curvature operator $\{(M_{1k},g_{1k},p_{1k})\}_{k=1}^{\infty}$, which smoothly converges to a 3d flying wing $(M_1,g_1,p_1)$. 
We may assume $R_{g_{1k}}(p_{1k})=R_{g_1}(p_1)=1$, and the asymptotic cone of $(M_1,g_1,p_1)$ is a sector with angle $\alpha_1\in(0,\pi)$.
This by Theorem \ref{l: positive angle} implies $\lim_{s\rii}R_{g_1}(\Gamma(s))=\sin^2\frac{\alpha_1}{2}$.

Let $(M_0,g_0,p_0)$ be a Bryant soliton with $R_{g_0}(p_0)=1$, since $\lim_{s\rii}R_{g_0}(\Gamma(s))=0$, we can find $s_1>0$ such that $R_{g_0}(\Gamma(s_1))<\frac{1}{2}\sin^2\frac{\alpha_1}{2}$.
Choose a constant $\widehat{R}\in(R_{g_0}(\Gamma(s_1)),\frac{1}{2}\sin^2\frac{\alpha_1}{2})$. Then by the convergence to $(M_1,g_1,p_1)$ and the continuity argument in Theorem \ref{t: existence with prescribed eigenvalue}, we can find a sequence of $\mathbb{Z}_2\times O(2)$-symmetric expanding gradient solitons $(M_{2k},g_{2k},p_{2k})$ with positive curvature operator, which smoothly converges to a 3d flying wing $(M_2,g_2,p_2)$, with $R_{g_2}(p_2)=R_{g_{2k}}(p_{2k})=1$ and  $R_{g_2}(\Gamma(s_1))=\widehat{R}$. Assume the asymptotic cone of $(M_2,g_2,p_2)$ is a sector with angle $\alpha_2\in[0,\pi]$. Then $\alpha\in(0,\pi)$ by Theorem \ref{t: a ray implies Bryant soliton}. Moreover, by Theorem \ref{l: positive angle} we have 
\begin{equation}
    \sin^2\frac{\alpha_2}{2}=\lim_{s\rii}R_{g_2}(\Gamma(s))\le\widehat{R}<\frac{1}{2}\sin^2\frac{\alpha_1}{2}.
\end{equation}

Therefore, by induction we obtain a sequence of 3d flying wings $(M_i,g_i,p_i)$ whose asymptotic cone is a sector with angle $\alpha_{i}$ satisfying $\sin^2\frac{\alpha_{i+1}}{2}<\frac{1}{2}\sin^2\frac{\alpha_{i}}{2}$ for all $i$. So $\alpha_i\ri0$ as $i\rii$.
\end{proof}

\end{section}

\bibliography{bib}
\bibliographystyle{abbrv}

\end{document}